\documentclass[reqno]{amsart}
\usepackage[foot]{amsaddr}
\makeatletter
\def\input@path{{configs/}}
\makeatother
\usepackage{preamble}
\usepackage{multirow}
\usepackage{pdflscape}
\usepackage{etoolbox}
\usepackage[dvipsnames]{xcolor}
\usepackage[normalem]{ulem}
\newcommand{\matt}{\mathrm{mat}}
\newcommand{\Rnd}{\R^{n\times \dots \times n}}
\newcommand{\Rnond}{\R^{n_1\times \dots \times n_d}}

\IfClassLoadedTF{cas-sc}{}{%
  \usepackage[bbgreekl]{mathbbol}
  \DeclareSymbolFontAlphabet{\mathbbm}{bbold}
}
\DeclareSymbolFontAlphabet{\mathbb}{AMSb}%

\newcommand{\mytensor}[1]{\mathbf{#1}}
\newcommand{\mytensorgreek}[1]{\boldsymbol{#1}}
\newcommand{\mytensormatrix}[1]{\mathsf{#1}}
\newcommand{\mytensormatrixgreek}[1]{\mathsf{#1}}

\newcommand{\tA}{\mytensor{A}}

\newcommand{\tB}{\mytensor{B}}

\newcommand{\tD}{\mytensor{D}}

\newcommand{\tG}{\mytensor{G}}

\newcommand{\tK}{\mytensor{K}}

\newcommand{\tM}{\mytensor{M}}

\newcommand{\tU}{\mytensor{U}}
\newcommand{\tV}{\mytensor{V}}
\newcommand{\tW}{\mytensor{W}}
\newcommand{\tX}{\mytensor{X}}
\newcommand{\tY}{\mytensor{Y}}
\newcommand{\tZ}{\mytensor{Z}}

\newcommand{\teta}{\mytensorgreek{\eta}}
\newcommand{\txi}{\mytensorgreek{\xi}}

\newcommand{\mtA}{\mytensormatrix{A}}
\newcommand{\mtB}{\mytensormatrix{B}}

\newcommand{\mtD}{\mytensormatrix{D}}

\newcommand{\mtG}{\mytensormatrix{G}}
\newcommand{\mtH}{\mytensormatrix{H}}
\newcommand{\mtI}{\mytensormatrix{I}}

\newcommand{\mtK}{\mytensormatrix{K}}
\newcommand{\mtL}{\mytensormatrix{L}}
\newcommand{\mtM}{\mytensormatrix{M}}

\newcommand{\mtP}{\mytensormatrix{P}}

\newcommand{\mtS}{\mytensormatrix{S}}

\newcommand{\mtV}{\mytensormatrix{V}}
\newcommand{\mtW}{\mytensormatrix{W}}

\newcommand{\mtLambda}{\mytensormatrixgreek{\Lambda}}

\usepackage{stmaryrd}

\newcommand{\idxrange}[1]{\llbracket #1 \rrbracket}

\newcommand{\ttrank}{\mathrm{rank}_\mathrm{TT}}
\newcommand{\ur}{\underline{r}}

\newcommand{\Gop}{\mathrm{G}}
\newcommand{\Hop}{\mathrm{H}}

\newcommand{\Projml}{\Proj^{\manif,L^2}}
\newcommand{\Projnl}{\Proj^{\cmanif,L^2}}
\newcommand{\Projsl}{\Proj^{\sphere,L^2}}
\newcommand{\Projm}{\Proj^{\manif}}
\newcommand{\Projvecm}{\Proj^{\vecmanif}}
\newcommand{\Projn}{\Proj^{\cmanif}}

\usepackage{ wasysym }

\newcommand{\cmanif}{\mathcal{C}^{\ur}}
\newcommand{\func}{E}
\newcommand{\X}{X}
\newcommand{\manif}{\mathcal{M}^{\ur}}
\newcommand{\vecmanif}{\mathfrak{M}^{\ur}}
\newcommand{\veccmanif}{\mathfrak{C}^{\ur}}
\newcommand{\bbN}{\mathbb{N}}

\newcommand{\sphere}{\mathcal{S}}

\newcommand{\analysis}{\mathbb{A}^{\Phi}}
\newcommand{\synthesis}{\mathbb{S}^{\Phi}}

\newcommand{\is}{i_1, \dots, i_d}

\newcommand{\oned}{\mathrm{1d}}

\newcommand{\Phioned}{\Phi^{\oned}}
\newcommand{\tphioned}{\widetilde{\phi}^{\oned}}
\newcommand{\Voned}{\mathrm{V}^{\oned}}

\newcommand{\matau}{\mtA_{\tU}}
\newcommand{\mass}{\mtM}

\newcommand{\stiff}{\mtS}
\newcommand{\stiffoned}{\stiff^{\oned}}

\newcommand{\matauh}{\hat{\mtA}_{\tU}}
\newcommand{\stiffh}{\hat{\stiff}}
\newcommand{\funch}{\hat{\func}}
\newcommand{\massh}{\hat{\mass}}

\newcommand{\masshtensrec}{\hat{\tM}_{\mathrm{rec}}}
\newcommand{\masshtenssqrtrec}{\hat{\tM}_{\mathrm{rec}}^{1/2}}
\newcommand{\masshoned}{\massh^{\oned}}
\newcommand{\stiffonedlag}{\stiff^{\oned}_{\mathrm{L}}}
\newcommand{\vmrec}{\vm_{\mathrm{rec}}}

\newcommand{\auh}[2]{\hat{a}_u(#1, #2)}
\newcommand{\innercont}[2]{(#1, #2)}
\newcommand{\innerquad}[2]{(#1, #2)_h}

\newcommand{\precon}{\mtB}
\renewcommand{\P}{\mathcal{P}}
\newcommand{\PAPMr}{\P_{\matau}}
\newcommand{\PAhPMr}{\P_{\matauh}}
\newcommand{\Pprecon}{\P_{\precon}}
\newcommand{\Ppreconinv}{\P_{\precon^{-1}}}
\newcommand{\preconapp}{\tilde{\precon}^{-1}}
\newcommand{\Ppreconapp}{\P_{\preconapp}}
\newcommand{\tKinv}{\tK_{\mathrm{rec}}}
\newcommand{\tKinvapp}{\widetilde{\tK}_{\mathrm{rec}}}

\newcommand{\Hspace}{\mathbf{H}}
\newcommand{\Lspace}{\mathbf{L}}
\newcommand{\Dp}{\mathcal{D}(p)}
\newcommand{\dinner}[2]{\langle\!\langle #1, #2 \rangle\!\rangle}
\newcommand{\OBshort}{\mathcal{OB}}
\newcommand{\OB}{\OBshort_{\mN}(p, \Hspace)}
\newcommand{\OBN}{\OBshort_{\mN}(p, \X_N)}
\newcommand{\Projob}{\Proj^{\OBshort}}
\newcommand{\Projobl}{\Proj^{\OBshort,L^2}}
\newcommand{\Projng}[1]{\Proj^{\cmanif,#1}}
\newcommand{\vecOBshort}{\mathfrak{OB}}
\newcommand{\vecOB}{\vecOBshort_{\mN}(p)}
\newcommand{\Projvecob}{\Proj^{\vecOBshort}}
\newcommand{\Projvecn}{\Proj^{\veccmanif}}
\newcommand{\gradFN}{\grad[\mathrm{F},\veccmanif]}
\newcommand{\PAhPMrphi}{\P_{\matauhphi}}
\newcommand{\PAhPN}{\P^{\veccmanif}_{\matauhphi}}
\newcommand{\auhphi}[2]{\hat{a}_{\tPhi}(#1, #2)}
\newcommand{\matauhphi}{\hat{\mtA}_{\tPhi}}
\newcommand{\gradF}{\grad[\mathrm{F}]}
\newcommand{\Aop}{\mathrm{A}}

\newcommand{\tPhi}{\mytensorgreek{\Phi}}

\newcommand{\stiffhp}{\hat{\stiff}_{p}}
\newcommand{\masshp}{\hat{\mass}_{p}}
\newcommand{\couplp}{\mK_{p}}
\newcommand{\masshtensrecp}{\hat{\tM}_{\mathrm{rec},p}}
\newcommand{\hphi}{\widehat{\phi}}

\let\phi\varphi
\newcommand{\inta}{0}
\newcommand{\intb}{1}
\newcommand{\baseint}{(\inta,\intb)}

\usepackage{placeins}

\usepackage[numbers,sort]{natbib}

\definecolor{cm}{rgb}{0.4,0.4,0.1}
\definecolor{ib}{rgb}{1,0.5,0} 
\makeatletter
\renewcommand\paragraph{\@startsection{paragraph}{4}%
  \z@\z@{-\fontdimen2\font}{\normalfont\itshape}}
\makeatother

\hypersetup{
    colorlinks=true,
    allcolors=green!50!black,
    urlcolor=red!50!black
}

\newcommand{\ttround}{\mathrm{round}}
\newcommand{\transp}[2]{\mathcal{T}_{#2\leftarrow#1}}

\newcommand{\theabstract}{%
  This work is concerned with the numerical computation of Gross--Pitaevskii
  ground states using the tensor train (TT) format. We regard the problem as the
  minimization of the Gross--Pitaevskii energy functional under a unit mass
  constraint, and propose a first-order Riemannian optimization scheme that
  preserves both the mass constraint and the low-rank representation throughout
  the optimization, by operating on the manifold of unit-norm functions of fixed
  TT-rank. On this manifold, we derive the energy-adaptive Riemannian gradient
  and show how to compute it efficiently in the TT format. Spatial
  discretization is performed using a spectral method with numerical
  integration, which allows the required computations to be carried out
  efficiently while preserving the low-rank format. The method is shown to
  substantially reduce computational time compared to full-rank computations,
  while maintaining accuracy for both single- and multicomponent
  Gross--Pitaevskii equations.%
}

\begin{document}

\title[Riemannian optimization on low-rank TT manifolds for the GPE]{Riemannian
optimization on low-rank tensor train manifolds for the Gross--Pitaevskii
equation}

\author[I.~Bioli]{Ivan Bioli$^{\dagger,\ddagger}$}
\email{ivan.bioli@unipv.it}

\author[C.~Marcati]{Carlo Marcati$^{\S}$}
\email{carlo.marcati@univ-lyon1.fr}

\author[M.~Rakhuba]{Maxim Rakhuba$^{\mathparagraph}$}
\email{mrakuba@hse.ru}

\address{$^{\dagger}$Dipartimento di Matematica, Università di Pavia, 27100
Pavia, Italy}
\address{$^{\ddagger}$Dipartimento di Ingegneria Civile e Architettura,
Università di Pavia, 27100 Pavia, Italy}
\address{$^{\S}$Université Lyon 1, Centrale Lyon, INSA Lyon, Université Jean
Monnet, CNRS, ICJ UMR5208, 69622 Villeurbanne, France}
\address{$^{\mathparagraph}$HSE University, Faculty of Computer Science, 109028 Moscow, Russian Federation}

\thanks{The work of IB has received funding from the European Union's Horizon
Europe research and innovation programme under the Marie Sk\l{}odowska-Curie
Action MSCA-DN-101119556 (IN-DEEP). IB is a member of the Gruppo Nazionale
Calcolo Scientifico -- Istituto Nazionale di Alta Matematica (GNCS-INdAM).
CM acknowledges the support of the French
National Research Agency (ANR) through grant ANR-24-CPJ1-0099-01
and of the Italian Ministry of University and Research (MUR)
through the PRIN 2022 PNRR project P2022NC97R, funded by the
European Union -- Next Generation EU
and the PRIN 2022 project 202292JW3F}

\begin{abstract}
    \theabstract

    \bigskip\noindent
    \textsc{Keywords.} Gross--Pitaevskii equation, tensor train, low-rank,
    Riemannian optimization, energy-adaptive gradient.
\end{abstract}

\maketitle

\sloppy

\section{Introduction}
\label{sec:intro}
We consider the minimization problem
\begin{equation} \label{eq:min}
    \min \bigg\{ \func(v): v\in \X \text{ and } \|v\|_{L^2(\Omega)} = 1\bigg\},
\end{equation}
in the space $\X\coloneqq H_0^1(\Omega)$ and with $\Omega$ a product domain in
dimension $d\in \{2, 3\}$. The energy functional is given by
\begin{equation} \label{eq:energy}
    \func (v) \coloneqq
    \frac 12 \int_{\Omega} |\nabla v|^2 +
    \frac 12 \int_{\Omega} V v^2 +
    \frac{\beta}{2} \int_{\Omega} v^4
\end{equation}
for some value $\beta>0$ and a potential $V:\Omega\to \R$ that is bounded from below.
A minimizer of \eqref{eq:min} is a stationary state of the
Gross--Pitaevskii (GP) equation (without rotation) and a ground state of
certain Bose--Einstein condensates under the Hartree--Fock
approximation.

A popular and effective way to minimize the Gross--Pitaevskii energy \eqref{eq:energy} is to use
methods based on gradient flows or Riemannian optimization.
The goal is to find a minimizer by following a curve of
decreasing energy on the unit sphere in $L^2(\Omega)$, so that the mass
constraint is always satisfied. Gradient flows and Riemannian optimization
often yield schemes that are equivalent at the discrete level. The descent
direction is then determined by the specific metric chosen on the manifold or,
equivalently, by the space in which the gradient is calculated.
Different choices have been proposed and studied
in the literature (see \cite{henningGrossPitaevskiiEquation2025} for a review
and the references in \Cref{sec:previous-work}).

In this paper, we speed up Riemannian optimization methods for the computation
of ground states of \eqref{eq:min}, by using tensor compression. We represent
the (discretized) quantum states and operators in Tensor Train (TT) format. We
thus reduce the cost of a discretization that has $n$ degrees of freedom per
coordinate direction from $n^d$ to a quantity that is linear in $n$ and $d$.
To do this, we start from the observation that TTs of fixed rank are a
manifold \cite{holtz_manifolds_2012} and that the intersection of the unit
sphere with TTs of fixed rank is itself a manifold (see
\Cref{prop:intersection}). It follows that Riemannian optimization can be
carried out on the intersection manifold, i.e., on functions of unit norm that can
be represented as TTs of low-rank.

The manuscript is then devoted to the derivation and implementation of
Riemannian line-search optimization on this intersection manifold. We focus on
the fixed-rank version of the ``energy-adapted''
\cite{altmann_energy-adaptive_2022} optimization, where the Riemannian metric
is given by the bilinear form associated with the linearization of the GP
Hamiltonian. We define and show how to efficiently compute all the objects that
are necessary for the implementation of the scheme. Since the computation of
the descent direction at each iteration requires the solution of a linear
system, we also show how to precondition this system, both in the full-size
problem and in the tensor-compressed one, where some empirical modifications
are required in order to retain the efficiency of the preconditioner. We then
investigate the multicomponent GP equation, whose efficient low-rank
minimization requires additional care compared to the single-component case.

Finally, we compare the computation of the ground state with low-rank tensors
with the full-size counterpart on a number of test cases, which differ in the
choice of the potential $V$ and of the constant $\beta$, for both the single- and
multicomponent equations. We observe substantial reductions in computation
time in most of the numerical tests, with the time needed to reach an
approximation of the ground state often reduced by an order of magnitude.
\subsection{Previous work}
\label{sec:previous-work}
We bring together techniques from two separate fields: on one
side, the minimization of GP energy functionals with Sobolev gradient flows; on
the other, dynamical low-rank tensor representations and Riemannian optimization
on manifolds of fixed-rank tensors.
Tensor trains have been recently used for GP equation, especially in their
Quantized/Quantics version (QTT), see \cite{Niedermeier2026,Connor2026}, though
not in a Riemannian framework, with the imaginary time evolution of
\cite{BouComas2025} being the closest to our algorithms.
The QTT approximation
of solutions to nonlinear eigenproblems has been investigated in \cite{MRS2022},
with a focus on singular potentials.
For a broader perspective on low-rank tensor methods for partial differential equations, we refer to~\cite{Bachmayr2023Low}.
\paragraph{Gradient flows and Riemannian optimization for Gross-Pitaevskii}
Gradient flows for the minimization of Schrödinger
energy functionals have been used in
\cite{GarciaRipollPerezGarcia2001,bao2004computing}, the latter being the
foundational work for Gross--Pitaevskii. Different gradients have subsequently
been considered in
\cite{DanailaKazemi2010}, where the
authors have introduced a specific Sobolev inner product
to deal with rotation. A point of view closer to Riemannian optimization is
developed in \cite{danaila_computation_2017,antoine_efficient_2017}, where the
nonlinear conjugate gradient is also used.
The energy-adaptive metric that we use here has been introduced and analyzed in
\cite{henning_sobolev_2020}. The methods based on Sobolev
gradient flows/Riemannian optimization with different metrics have been shown to
converge in \cite{chenConvergenceSobolevGradient2024,chenFullyDiscretizedSobolev2024}.
We mention also \cite{heid_gradient_2021}, where adaptive spatial
discretizations are combined with the minimization process, and
\cite{AltmannJmethod}, where the J-method is applied to the GP equation. Generalizations of
the methods considered in this paper to
Kohn--Sham \cite{altmann_energy-adaptive_2022} and multicomponent equations
\cite{altmann_riemannian_2025} have been developed; the latter is particularly
relevant for our work on the same subject. We conclude by mentioning the recent
interest in second-order minimization schemes
\cite{AltmannPeterseimStykel2024,AiHenningYadavYuan2026,altmann_riemannian_2025}.
Extending the methods introduced in this paper to second-order techniques is an
interesting direction for future research.
\paragraph{Riemannian optimization on manifolds of tensors of fixed rank}
As already mentioned, tensors of prescribed TT rank are a smooth manifold, and
Riemannian optimization algorithms can be implemented directly in the TT format,
including nonlinear conjugate-gradient methods \cite{steinlechner_riemannian_2016}
and exploit automatic differentiation \cite{novikov_automatic_2022}, and algorithms for
computing several eigenstates of high-dimensional
Hamiltonians~\cite{rakhuba2019low}.
Automatic differentiation can be used to compute the required Riemannian derivatives, as in the present work.
Closely related ideas arise in dynamical low-rank approximation,
where the ambient evolution is projected onto the tangent space of a low-rank
manifold \cite{KochLubich2007}; extensions to TT representations have been developed in \cite{lubichTimeIntegrationTensor2015,HaegemanEtAl2011,Haegeman2016Unifying}.

\subsection{Contributions}
The main contributions of this paper can be summarized as follows:
\begin{itemize}
    \item We introduce Riemannian optimization schemes
          for the GP energy over fixed-rank tensor trains.
    \item We derive efficient ways to compute the Riemannian gradient for the
          single- and multicomponent problems.
    \item We propose and test different preconditioning strategies, and adapt them
          to the low-rank case.
    \item We test the method numerically and show that it allows for substantial
          gains in computational time on the tests we consider.
\end{itemize}
\subsection{Structure of the paper}
The rest of the paper is organized as follows. \Cref{sec:formulation} introduces
the notation and the continuous minimization problem. \Cref{sec:findim}
describes the finite-dimensional setting, the TT format, and the Riemannian
manifolds involved. \Cref{sec:riemannian} presents the Riemannian optimization
algorithms and the computation of the Riemannian gradient on the manifold of
unit-norm fixed TT-rank functions. \Cref{sec:numerical_discretization} and
\Cref{sec:precond} detail the discretization and the preconditioners.
\Cref{sec:multicomponent} extends the whole construction to the multicomponent
case, and \Cref{sec:numexp} presents the numerical experiments.

\section{Problem formulation}
\label{sec:formulation}

\subsection{Notation}
We denote the positive integers $\bbN = \{1, 2, \dots\}$ and $\bbN_0 = \{0\}
    \cup \bbN$. For all $n\in \bbN$, we write $\idxrange{1,n} = \{1, \dots,
    n\}$. For $p\in [1, \infty]$, $d\in\bbN$, and $\Omega\subset\R^d$,
$L^p(\Omega)$ denotes the space of $p$-summable functions. $H^k(\Omega)$
denotes the usual Sobolev space of order $k\in\bbN$.

We denote vectors $\va\in\R^n$ by bold lowercase letters,
and tensors (with ``tensor'' meaning a multi-dimensional array)
$\tA\in\R^{n_1\times\dots\times n_d}$ by bold capitals, with
entries $\tA(i_1,\dots,i_d)$ or $\tA_{i_1,\dots,i_d}$. The Frobenius inner
product of $\tX,\tY$ is written $\inner{\tX}{\tY}$, and the Hadamard
(entry-wise) product as $\tX \odot \tY$. The vectorization operator
$\vecc:\R^{n_1\times\dots\times n_d}\to\R^{n_1\cdots n_d}$ stacks tensor
entries in reverse lexicographic order, with inverse $\matt:\R^{n_1\cdots
        n_d}\to\R^{n_1\times\dots\times n_d}$. Linear operators are denoted by
sans-serif letters $\mtL:\R^{n_1\times\dots\times n_d}\to
    \R^{n_1\times\dots\times n_d}$, and, with a slight abuse of notation, we
often identify them with their matrix representation in $\R^{(n_1\cdots
        n_d)\times(n_1\cdots n_d)}$.
Finally, for a tensor $\tA$, $\diag(\tA)$
denotes the operator corresponding to the Hadamard product with $\tA$, i.e.,
$\diag(\tA): \R^{n_1\times \dots\times n_d }\to \R^{n_1\times\dots\times n_d}$
such that $\diag(\tA)\tB = \tA \odot \tB$.

\subsection{The continuous minimization problem}
We consider, throughout the paper, the minimization problem \eqref{eq:min} under
the hypotheses that $V\in L^p(\Omega)$ for $p>d/2$ and $V\geq 0$ a.e.~in $\Omega$.
The extension to $V\geq V_{\min{}}$ with
$V_{\min{}}\in \R$ is straightforward, as replacing $V$ with $V - V_{\min{}}$
only causes a shift in the energy, without affecting the minimizer.

We define, for all $u \in X$, the bilinear form
\begin{equation}
    \label{eq:bilinear}
    a_u(v, w) \coloneqq
    \int_{\Omega} \nabla v \cdot \nabla w \,dx +
    \int_{\Omega} V v w \,dx +
    2\beta\int_{\Omega} u^2 v w \,dx,
    \qquad \forall v, w \in X.
\end{equation}
It satisfies
\begin{equation}
    \label{eq:au-Eprime}
    a_u(u, v) = E'(u)[v], \qquad \forall v \in X,
\end{equation}
with $E'(u) \in X'$ denoting the Fréchet derivative of $E$ at $u$. For fixed $u
    \in X$, the bilinear form $a_u : X \times X \to \R$ is
continuous and coercive. In particular, there exist constants $C_1, C_2 > 0$
such that, for all $u, v, w \in X$,
\begin{equation}
    \label{eq:au_coercivity}
    \begin{split}
        a_u(v, v) & \geq C_1 \norm{\nabla v}_{L^2(\Omega)}^2,                    \\
        a_u(v, w) & \leq C_2 \bigl(1 + \norm{\nabla u}_{L^{2}(\Omega)}^{2}\bigr)
        \norm{\nabla v}_{L^2(\Omega)} \norm{\nabla w}_{L^2(\Omega)}.
    \end{split}
\end{equation}
The energy \eqref{eq:energy} has two minimizers $u$ and $-u$, with $u\geq 0$
a.e.~in $\Omega$ \cite{Cances2010}. In the following, we will refer to $u$ as
the minimizer of the energy. The minimizers also satisfy the elliptic nonlinear
eigenvector equation
\begin{equation*}
    -\Delta u + Vu + 2\beta u^3 = \lambda u
\end{equation*}
for some value $\lambda\in \R$, which is the Euler--Lagrange equation associated
with the original minimization problem.

\section{Finite-dimensional discretization and manifolds}
\label{sec:findim}
In this section we introduce the finite-dimensional discretization of the
problem, we recall the Tensor Train (TT) format, and we present the Riemannian
manifolds involved in the optimization problem.

\subsection{Tensor product discretization}
Let $X_N$ be a
finite-dimensional subspace of $X$ with $\dim(X_N) = N$. The associated
discrete problem reads
\begin{equation}
    \label{eq:min-disc}
    \min \bigg\{ \func(v_N) : v_N \in \X_N, \ \|v_N\|_{L^2(\Omega)} = 1 \bigg\}.
\end{equation}
Assume that $X_N$ is a tensor product space. For simplicity, we restrict the
presentation to tensor products of identical spaces, although the derivations
extend to the case of different spaces in each dimension. Let $N = n^d$ and
\[
    X_N = \bigotimes_{k=1}^d \Voned, \qquad \Voned \subseteq H^1_0(\baseint), \,\,\dim(\Voned) = n.
\]
Let $\Phioned = \{\phi_1, \dots, \phi_n\}$ be a basis of $\Voned$. The tensor
product basis of $X_N$ is then
\[
    \Phi = \Big\{ \phi_{i_1, \dots, i_d} \coloneqq \phi_{i_1} \otimes \dots \otimes \phi_{i_d} \Big\}_{i_1, \dots, i_d \in \idxrange{1, n}},
\]
where with the $\otimes$ notation we mean $\phi_{i_1, \dots, i_d}(\vx) = \phi_{i_1}(x_1) \cdots \phi_{i_d}(x_d)$ for $\vx = (x_1, \dots, x_d) \in \Omega$.
The specific choice of basis that we make in this paper will be detailed in
\Cref{sec:numerical_discretization}.
Each $u \in X_N$ can be identified with the $d$-dimensional tensor $\tU \in
    \Rnd$ of its coefficients in the basis $\Phi$. This relation is expressed
through the analysis and synthesis operators defined below. The synthesis
operator $\synthesis_N \colon \mathbb{R}^{n\times \dots \times n} \to \X_N$
is given by
\[
    \synthesis_N(\tU) = \sum_{i_1,\dots,i_d = 1}^n \tU(i_1, \dots, i_d) \, \phi_{i_1, \dots, i_d}.
\]
Its inverse is the analysis operator
\[
    \analysis_N \coloneqq \left(\synthesis_N\right)^{-1} \colon \X_N \to \mathbb{R}^{n\times \dots\times n}.
\]
We refer to $\tU = \analysis_N(u)$ as the \emph{tensor representation of} $u$,
and to $u = \synthesis_N(\tU)$ as the \emph{function represented by} $\tU$.

In the following, for ease of notation, we sometimes omit the dependence on $N$
in the subscripts.

\subsection{The manifold of unit-norm fixed TT-rank functions}
\label{sec:manifolds}
In this section we describe the manifold of unit-norm fixed TT-rank functions,
obtained as the intersection of the manifold of fixed TT-rank functions and the $L^2$-sphere manifold.

\paragraph{The manifold of fixed TT-rank functions.}
Let $r_j \in \bbN$ for $j=0, \dots, d$.
We say that $\tA\in\Rnond$ can be represented as a
TT-tensor~\cite{oseledets2011tensor} of TT-rank $\ur = (r_0, \dots,r_{d})$, with
$r_0=r_d =1$, if there exist $G_k\colon \{1,\dots,n_k\}\to \R^{r_{k-1}\times
        r_k}$, $k=1,\dots, d$, such that
\begin{equation}
    \label{eq:TT-generic}
    \tA(i_1, \dots, i_d) = G_1(i_1)\dots G_d(i_d),\qquad \forall (i_1,
    \dots, i_d)\in \idxrange{1, n_1}\times\dots\times\idxrange{1, n_d}.
\end{equation}
The maps $G_k$ are called TT-cores and can equivalently be seen as
three-dimensional arrays. The TT-rank $\ttrank(\tA)$ of a tensor $\tA\in\Rnond$
is the smallest $\ur$ such that $\tA$ can be represented as a TT-tensor of rank
$\ur$. The TT-rank of a tensor is always well defined
\cite{holtz_manifolds_2012,oseledets2011tensor}. Throughout, we write $n =
    \max_k n_k$ and $r = \max_k r_k$. The representation \eqref{eq:TT-generic} is
described by $\calO(d n r^2)$ parameters instead of the $n^d$ entries of the
full tensor \cite{oseledets2011tensor}: the exponential dependence on the
dimension is replaced by a linear one, which makes it possible to handle
high-dimensional tensors efficiently, provided their (approximate) TT-rank is
small.

Following \cite[Section~4.1]{steinlechner_riemannian_2016}, it is convenient to
split $\tA$ into a left and a right part through the \emph{interface matrices}
\begin{equation}
    \label{eq:interface}
    \begin{aligned}
        \tA_{\leq k}(i_1, \dots, i_k) & = G_1(i_1)G_2(i_2)\cdots G_k(i_k),
                                      & \tA_{\leq k}                                                & \in \R^{n_1 n_2\cdots n_k \times r_k},         \\
        \tA_{\geq k}(i_k, \dots, i_d) & = \bigl[G_k(i_k)G_{k+1}(i_{k+1})\cdots G_d(i_d)\bigr]^\top,
                                      & \tA_{\geq k}                                                & \in \R^{n_k n_{k+1}\cdots n_d \times r_{k-1}},
    \end{aligned}
\end{equation}
where we slightly abuse the notation, as $\tA_{\leq k}(i_1, \dots, i_k)$ is the
row of $\tA_{\leq k}$ indexed by $(i_1, \dots, i_k)$, hence a row vector of
length $r_k$, and analogously $\tA_{\geq k}(i_k, \dots, i_d)$ is a row vector of
length $r_{k-1}$. We set $\tA_{\leq 0} = \tA_{\geq d+1} = 1$, so that
\eqref{eq:TT-generic} reads
\begin{equation}
    \label{eq:TT-interface}
    \tA(i_1, \dots, i_d) = \tA_{\leq k-1}(i_1, \dots, i_{k-1})\, G_k(i_k)\,
    \tA_{\geq k+1}(i_{k+1}, \dots, i_d)^\top, \qquad k = 1, \dots, d.
\end{equation}
We further denote by $\mtG_k^{\mathsf{L}} \in \R^{r_{k-1}n_k\times r_k}$ and
$\mtG_k^{\mathsf{R}} \in \R^{r_{k-1}\times n_k r_k}$ the \emph{left} and
\emph{right unfoldings} of the core $G_k$, obtained by merging its first two and
its last two indices, respectively. The tensor $\tA$ is called
\emph{$\mu$-orthogonal} if
\begin{equation}
    \label{eq:left-orth}
    \begin{aligned}
        (\mtG_k^{\mathsf{L}})^\top \mtG_k^{\mathsf{L}} = \mtI_{r_k},
         & \quad \text{and hence} \quad \tA_{\leq k}^\top \tA_{\leq k} = \mtI_{r_k},
         &                                                                               & \text{for all } k = 1, \dots, \mu-1, \\
        \mtG_k^{\mathsf{R}} (\mtG_k^{\mathsf{R}})^\top = \mtI_{r_{k-1}},
         & \quad \text{and hence} \quad \tA_{\geq k}^\top \tA_{\geq k} = \mtI_{r_{k-1}},
         &                                                                               & \text{for all } k = \mu+1, \dots, d,
    \end{aligned}
\end{equation}
and it is called \emph{left-orthogonal} if $\mu = d$ and \emph{right-orthogonal}
if $\mu = 1$. Such a representation is obtained from an arbitrary one with
$\calO(dnr^3)$ operations, by a sequence of QR factorizations of the unfoldings
of the cores \cite[Algorithm~4.1]{steinlechner_riemannian_2016}.

Tensors of fixed TT-rank form a manifold
\cite{holtz_manifolds_2012,steinlechner_riemannian_2016}, which we denote
\begin{equation*}
    \vecmanif = \big\{\tA \in \Rnond: \ttrank(\tA)= \ur \big\}.
\end{equation*}
Given a tensor $\tA \in \vecmanif$ with cores $G_1, \dots, G_d$ as in
\eqref{eq:TT-generic}, the tangent space to $\vecmanif$ at $\tA$ is given by
\begin{multline}
    \label{eq:delta-parametrization}
    \Tang_{\tA} \vecmanif=  \big\{ \tB:\tB(i_1, \dots, i_d) = \delta G_1(i_1) G_2(i_2)\cdots
    G_d(i_d) + \ldots + G_1(i_1)\cdots G_{d-1}(i_{d-1})\delta G_d(i_d)\\
    \text{ with }\delta G_j (i_j)\in \R^{r_{j-1}\times r_j}
    \text{ for all }j=1, \dots, d
    \big\}.
\end{multline}
A unique
representation of the elements of the tangent space can be obtained by
imposing the gauge conditions
\begin{equation}
    \label{eq:gauge}
    \left[
        \delta G_j, G_j \right] = 0_{r_j\times r_j} \qquad \text{for all }j=1, \dots, d-1,
\end{equation}
where for two cores $V$ and $W$, the product $[V, W]_{mn} =
    \sum_{i}\sum_{k}V(i)_{km}W(i)_{kn}$ multiplies the columns of the left
unfoldings of the cores. A tangent vector is thus itself a TT tensor, of TT rank
at most $2\ur$, and is stored through the $\delta G_k$ with the same
$\calO(dnr^2)$ complexity as a point of $\vecmanif$
\cite[Section~4.4]{steinlechner_riemannian_2016}. We say that the tensor is
parametrized by the $\delta$-cores $\delta G_1, \dots, \delta
    G_d$, implicitly assuming the gauge condition \eqref{eq:gauge}.

We introduce the manifold of TT-rank
$\ur = (r_0, \dots, r_d)$ functions in $\X_N$ as the set of functions whose
tensor representation has fixed TT-rank $\ur$:
\[
    \manif = \left\{ u \in \X_N : \ttrank\bigl(\analysis_N u\bigr) = \ur \right\}.
\]
Note that this definition depends only on the underlying one-dimensional spaces,
not on the specific choice of bases, since a change of basis is represented by a
TT-matrix of rank one.
Denote by $\vecmanif$ the set of TT-tensors of fixed TT-rank $\ur$ in $\Rnd$.
Since $\manif = \synthesis_N(\vecmanif)$ and the synthesis operator is a linear
isomorphism, $\manif$ is an embedded submanifold of $\X_N$, with tangent space
at $u \in \manif$ given by
\[
    \Tang_u \manif = \synthesis_N \bigl( \Tang_{\analysis_N u} \vecmanif \bigr).
\]

\paragraph{The \texorpdfstring{$L^2$}{L2}-sphere.}
We introduce the spherical manifold associated with the unit $L^2$ norm
constraint:
\[
    \sphere = \left\{ u \in \X_N : \|u\|_{L^2(\Omega)} = 1 \right\},
\]
whose tangent space at $u \in \sphere$ is
\[
    \Tang_u \sphere = \left\{ v \in \X_N : (v,u)_{L^2 (\Omega)} = 0 \right\}.
\]

\paragraph{The intersection manifold.}
We define the set of unit-norm and fixed TT-rank functions as the intersection
\begin{equation}
    \label{eq:intersection-manifold}
    \cmanif = \manif \cap \sphere.
\end{equation}
Crucially for what follows, this is itself a manifold.
\begin{proposition}
    \label{prop:intersection}
    $\cmanif$ is an embedded submanifold of $\X_N$. For all $u \in \cmanif$,
    \[
        \Tang_u (\manif \cap \sphere) =
        \Tang_u \manif \cap \Tang_u \sphere.
    \]
\end{proposition}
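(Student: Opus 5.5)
The plan is to write $\cmanif$ as the zero set of a smooth function on the manifold $\manif$ and apply the regular value theorem. Consider $g\colon \manif\to\R$, $g(v) = \|v\|_{L^2(\Omega)}^2 - 1$. It is the restriction to the embedded submanifold $\manif\subset\X_N$ of a smooth (quadratic) function on $\X_N$, so it is smooth on $\manif$, and $\cmanif = g^{-1}(0)$. For $u\in\manif$ its differential is
\[
    \mathrm{D}g(u)[\xi] = 2\,(u,\xi)_{L^2(\Omega)}, \qquad \xi\in\Tang_u\manif.
\]

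The key step is to show that $0$ is a regular value, i.e.\ that $\mathrm{D}g(u)\neq 0$ on $\Tang_u\manif$ for every $u\in\cmanif$. Since the target is $\R$, it suffices to find one tangent direction with nonzero derivative. I will use $\xi = u$ itself. The manifold $\vecmanif$ is a cone: if $\tA$ has TT-rank $\ur$, then $t\tA$ has TT-rank $\ur$ for all $t\neq 0$. To see this, scale the first core by $t$; the TT-rank cannot drop, since scaling back by $1/t$ is the inverse operation.

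Hence the curve $t\mapsto (1+t)u$ lies in $\manif$ for small $|t|$, and its velocity at $t=0$ is $u$, so $u\in\Tang_u\manif$. The same conclusion can be read off from \eqref{eq:delta-parametrization} with $\delta G_1 = G_1$ and all other $\delta G_j = 0$. Then $\mathrm{D}g(u)[u] = 2\|u\|_{L^2(\Omega)}^2 = 2\neq 0$, so $\mathrm{D}g(u)$ is surjective.

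By the regular level set theorem, $\cmanif$ is an embedded submanifold of $\manif$ of codimension one. An embedded submanifold of an embedded submanifold is embedded, so $\cmanif$ is an embedded submanifold of $\X_N$. The same theorem gives
\[
    \Tang_u\cmanif = \ker \mathrm{D}g(u) = \{\xi\in\Tang_u\manif : (u,\xi)_{L^2(\Omega)} = 0\} = \Tang_u\manif\cap\Tang_u\sphere,
\]
using the stated description of $\Tang_u\sphere$.

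The only point needing care is the regular-value check, specifically justifying $u\in\Tang_u\manif$. The cone property of fixed TT-rank tensors, or equivalently the explicit tangent parametrization, handles it. An alternative route is to note that $\manif$ and $\sphere$ intersect transversally, because $u\in\Tang_u\manif$ is not in $\Tang_u\sphere$ and $\Tang_u\sphere$ has codimension one. This transversality is the same computation.
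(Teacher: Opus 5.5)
Your proof is correct and is essentially the paper's argument. The paper shows that $\manif$ and $\sphere$ intersect transversally, using $u\in\Tang_u\manif$ (obtained with $\delta G_d=\alpha G_d$) together with $\Tang_u\sphere+\mathrm{span}\{u\}=\X_N$, and then applies the transversal-intersection theorem; this is equivalent to your observation that $v\mapsto\|v\|_{L^2(\Omega)}^2-1$, restricted to $\manif$, has $0$ as a regular value, with the same key fact. (Your alternative choice $\delta G_1=G_1$ violates the gauge conditions \eqref{eq:gauge}, but this is harmless, since \eqref{eq:delta-parametrization} describes the tangent space without them and your cone argument already suffices.)
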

\begin{proof}
    The manifolds $\manif$ and $\sphere$ intersect transversally.
    Let $u\in\manif \cap \sphere$.
    Then $\mathrm{dim}(\Tang_u\sphere) = N-1$, with
    \[
        \Tang_u \sphere + \{\alpha u, \alpha\in\mathbb{R}\} = \X_N.
    \]
    For any $\alpha \in\R$, $\alpha \analysis_N u \in \Tang_{\analysis_N u} \vecmanif$ (choose $\delta G_k = 0$ for $k=1,\dots,d-1$ and $\delta G_d = \alpha G_d$).
    Therefore $\alpha u\in \Tang_u \manif$ and we obtain that
    \[
        \Tang_u \sphere + \Tang_u \manif = \X_N,
    \]
    which implies transversality and hence concludes the proof,
    by~\cite[Corollary 3.5.13]{Abraham1988}.
\end{proof}
We can therefore minimize the energy \eqref{eq:energy} on the intersection
manifold $\cmanif$ using Riemannian optimization. The description of the
algorithm is the subject of the next section.

\section{Riemannian optimization on the manifold of unit-norm fixed TT-rank functions}
\label{sec:riemannian}

We now describe how the minimization problem \eqref{eq:min-disc} is solved on
$\cmanif$. We first present, in \Cref{sec:linesearch}, the two line-search
methods we use, which are stated for a general Riemannian manifold.
\Cref{sec:tools} specifies the implementation of the methods on $\cmanif$, and \Cref{sec:rgrad}, which contains the bulk of the
derivation, is devoted to the Riemannian gradient.

\subsection{Riemannian line-search methods}
\label{sec:linesearch}

Given an initial guess $u^0 \in \cmanif$, a line-search Riemannian optimization
method on $\cmanif$ \cite{absil_optimization_2008,boumal_introduction_2023}
produces the iterates
\begin{equation}
    \label{eq:scheme}
    u^{k+1} = \Retr_{u^k}\bigl(\alpha_k \xi_k\bigr), \qquad k = 0, 1, \dots,
\end{equation}
where $\xi_k \in \Tang_{u^k}\cmanif$ is a search direction, $\alpha_k > 0$ is a
step size, and $\Retr$ is a retraction onto $\cmanif$. Building such an
iteration requires three ingredients: the Riemannian gradient $\grad\func(u)$,
which defines the search direction; the retraction $\Retr$; and, for the
conjugate gradient variant below, a transporter $\transp{u}{v}$ mapping tangent
vectors from $\Tang_u\cmanif$ to $\Tang_v\cmanif$. The last two are given in
\Cref{sec:tools}, the first in \Cref{sec:rgrad}.

\paragraph{Riemannian Gradient Descent (R-GD)}
The simplest choice for the descent direction is the negative Riemannian gradient,
\begin{equation}
    \label{eq:rgd}
    \xi_k = -\grad \func(u^k).
\end{equation}
The Riemannian gradient depends on the metric of the manifold. This manuscript
focuses on the metric induced by the $a_u$ bilinear form, see \eqref{eq:metric}
below. This gives the discrete counterpart of the energy-adaptive Sobolev
gradient flow of \cite{henning_sobolev_2020}, with the step size selected by
line search rather than fixed.

\paragraph{Riemannian Nonlinear Conjugate Gradient (R-NLCG)}
R-NLCG accelerates \eqref{eq:rgd} by adding to the negative gradient the
previous search direction, transported to the current tangent space:
\begin{equation} \label{eq:rnlcg}
    \xi_k = -g_k + \beta_k\, \transp{u^{k-1}}{u^k}(\xi_{k-1}),
    \qquad g_k \coloneqq \grad\func(u^k).
\end{equation}
We use the Hestenes--Stiefel rule with a non-negativity safeguard
\cite{sato_riemannian_2021}, namely
\begin{equation}
    \label{eq:betaHS}
    \beta_k = \max\left(0, \,
    \frac{\innersmall[u^k]{g_k}{y_k}}
    {\innersmall[u^k]{y_k}{\transp{u^{k-1}}{u^k}(\xi_{k-1})}}\right),
    \qquad
    y_k = g_k - \transp{u^{k-1}}{u^k}(g_{k-1}),
\end{equation}
where all inner products are taken in the $a_u$ metric \eqref{eq:bilinear} at the
current iterate $u^k$. If the direction produced by \eqref{eq:rnlcg} fails to
satisfy $\innersmall[u^k]{g_k}{\xi_k} < 0$, it is discarded and replaced by
$\xi_k = -g_k$, so that R-NLCG reduces to one R-GD step. The choice $\beta_k
    \equiv 0$ recovers R-GD, and the two methods therefore differ only in the search
direction: they solve the same flow, R-NLCG being an accelerated version of
R-GD.

\paragraph{Step size.}
In both methods $\alpha_k$ is chosen by Armijo backtracking adapted to
Riemannian optimization \cite[Definition~4.2.2]{absil_optimization_2008}, with
the initial trial step inferred from the previous iteration.

\subsection{Geometric tools on \texorpdfstring{$\cmanif$}{Nr}}
\label{sec:tools}
Following \cite{henning_sobolev_2020}, we define a Riemannian metric on the
tangent bundle of the manifold of unit-norm and fixed TT-rank functions, given
by
\begin{equation}
    \label{eq:metric}
    \forall u \in \cmanif \quad \left<v, w\right>_u = a_u(v, w),
    \qquad \forall v, w \in \Tang_u \cmanif.
\end{equation}
The fact that this is a Riemannian metric follows from the coercivity and
continuity of the bilinear form. The remaining two ingredients of
\eqref{eq:scheme} are the following. We discuss this choice of metric and
explicit formulas to compute the gradient in \Cref{sec:rgrad}.

\paragraph{$L^2$-orthogonal projection and transporter.}
To map tangent vectors between tangent spaces in \eqref{eq:rnlcg} and
\eqref{eq:betaHS} we use the $L^2$-orthogonal projection onto the target tangent
space,
\begin{equation}
    \label{eq:transporter}
    \transp{u}{v} = \restr{\Projnl_v}{\Tang_u \cmanif} \colon
    \Tang_u \cmanif \to \Tang_v \cmanif,
    \qquad u, v \in \cmanif.
\end{equation}
The transporter \eqref{eq:transporter} is cheap because $\Projnl_u$ factors
through the standard TT tangent-space projection, as follows.

\begin{lemma}
    \label{lem:proj-commute-single}
    For all $u \in \cmanif$,
    \[
        \Projnl_u = \Projsl_u \Projml_u = \Projml_u \Projsl_u,
    \]
    where $\Projml_u$ and $\Projsl_u$ denote the $L^2$-orthogonal projections onto
    $\Tang_u \manif$ and $\Tang_u \sphere$, respectively.
\end{lemma}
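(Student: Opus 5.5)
The plan is to use the standard fact that two orthogonal projections $P,Q$ on a finite-dimensional inner product space commute if and only if the decomposition works out. When they commute, $PQ$ is the orthogonal projection onto $\operatorname{ran}P\cap\operatorname{ran}Q$. Here the ambient space is $\X_N$ with the $L^2(\Omega)$ inner product. By \Cref{prop:intersection}, $\Tang_u\cmanif=\Tang_u\manif\cap\Tang_u\sphere$. So once we show that $\Projml_u$ and $\Projsl_u$ commute, their product is the $L^2$-orthogonal projection onto $\Tang_u\cmanif$, namely $\Projnl_u$.

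The projection onto the sphere's tangent space is explicit. Since $\|u\|_{L^2(\Omega)}=1$, we have $\Projsl_u v = v - (v,u)_{L^2(\Omega)}\,u$, that is, $\Projsl_u = I - \Pi_u$ with $\Pi_u v=(v,u)_{L^2(\Omega)}u$ the rank-one orthogonal projection onto $\operatorname{span}\{u\}$. Commutation of $\Projml_u$ and $\Projsl_u$ therefore reduces to commutation of $\Projml_u$ with $\Pi_u$.

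The key input is the observation already used in the proof of \Cref{prop:intersection}: $u\in\Tang_u\manif$. This holds because taking $\delta G_d=G_d$ and all other $\delta G_k=0$ gives $\analysis_N u\in\Tang_{\analysis_N u}\vecmanif$. Hence $\Projml_u u=u$, which gives $\Projml_u\Pi_u=\Pi_u$.

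Since both operators are $L^2$-self-adjoint, taking adjoints gives $\Pi_u\Projml_u=(\Projml_u\Pi_u)^*=\Pi_u^*=\Pi_u$. One can also verify this directly: $(\Projml_u v,u)_{L^2(\Omega)}=(v,\Projml_u u)_{L^2(\Omega)}=(v,u)_{L^2(\Omega)}$. Thus $\Projml_u\Pi_u=\Pi_u\Projml_u$, and so $\Projml_u\Projsl_u=\Projsl_u\Projml_u=\Projml_u-\Pi_u$.

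It remains to check that $P:=\Projml_u-\Pi_u$ is the $L^2$-orthogonal projection onto $\Tang_u\manif\cap\Tang_u\sphere$.
\begin{itemize}
\item $P$ is self-adjoint, being a difference of self-adjoint operators.
\item $P$ is idempotent. Expanding $P^2$ and using $\Projml_u\Pi_u=\Pi_u\Projml_u=\Pi_u$ gives $P^2=\Projml_u-\Pi_u-\Pi_u+\Pi_u=P$.
\item Its range lies in the intersection. Writing $Pv=\Projsl_u(\Projml_u v)$ shows $Pv\in\Tang_u\sphere$, and writing $Pv=\Projml_u(\Projsl_u v)$ shows $Pv\in\Tang_u\manif$.
\item It fixes the intersection. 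If $w\in\Tang_u\manif\cap\Tang_u\sphere$, then $\Projml_u w=w$ and $\Pi_u w=0$, so $Pw=w$.
\end{itemize}
Hence $P=\Projnl_u$. I do not expect a real obstacle. The only point needing care is noting that $u\in\Tang_u\manif$, which is the key step from \Cref{prop:intersection}. Without it the projections would not commute in general.
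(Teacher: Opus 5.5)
Your proof is correct and follows the paper's argument. The paper also uses the explicit formula for $\Projsl_u$, the key fact $u\in\Tang_u\manif$, and self-adjointness of $\Projml_u$ to show both compositions equal $v\mapsto \Projml_u v-(u,v)_{L^2(\Omega)}u$, then invokes \Cref{prop:intersection}; the only difference is that you check self-adjointness, idempotence and the range of the product by hand, where the paper cites the standard fact about commuting orthogonal projections.
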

\begin{proof}
    Since $\norm{u}_{L^2(\Omega)} = 1$, the $L^2$-orthogonal projection onto
    $\Tang_u\sphere = \{v \in \X_N : (v,u)_{L^2(\Omega)} = 0\}$ is $\Projsl_u v = v - (u,v)_{L^2(\Omega)}\,u$.
    By the same arguments as in the proof of \Cref{prop:intersection}, $u \in \Tang_u\manif$, so that
    $\Projml_u u = u$ and, by self-adjointness of $\Projml_u$,
    \[
        (u, \Projml_u v)_{L^2(\Omega)} = (\Projml_u u, v)_{L^2(\Omega)} = (u, v)_{L^2(\Omega)}, \qquad \forall v \in \X_N .
    \]
    Both compositions therefore equal $v \mapsto \Projml_u v - (u,v)_{L^2}\,u$, and
    $\Projml_u$ and $\Projsl_u$ commute. Two commuting orthogonal projections
    compose to the orthogonal projection onto the intersection of their ranges,
    which by \Cref{prop:intersection} is $\Tang_u\manif \cap \Tang_u\sphere =
        \Tang_u\cmanif$.
\end{proof}
It follows that the computational cost of applying $\Projnl_u$ is the same as one
application of $\Projml_u$ followed by a
rank-one correction. The projection $\Projml_u$ of a TT tensor of rank
$\calO(\tilde{r})$ onto $\Tang_u\manif$ requires $\calO(dn\tilde{r}r^2)$
operations \cite[Section~4.4]{steinlechner_riemannian_2016}, and the correction
is a single inner product between TT tensors, at cost $\calO(dnr^2)$; the transporter
\eqref{eq:transporter} therefore costs $\calO(dnr^3)$ operations.

\paragraph{Retraction.}
For $u \in \cmanif$ and $\xi \in \Tang_u\cmanif$ we use the retraction obtained
by truncating back to TT rank $\ur$ and then normalizing,
\begin{equation}
    \label{eq:retraction}
    \Retr_u(\xi) = \frac{w}{\norm{w}_{L^2(\Omega)}},
    \qquad
    w = \synthesis_N\Bigl( \ttround_{\ur}\bigl( \analysis_N (u + \xi) \bigr) \Bigr),
\end{equation}
where $\ttround_{\ur}$ denotes TT rounding to rank $\ur$. Since $u + \xi$ has TT
rank at most $2\ur$, the rounding is performed by the TT-SVD algorithm at a cost
of $\calO(dnr^3)$ operations \cite[Section~3]{oseledets2011tensor}. The
retraction is therefore not more expensive than a single tangent-space
projection.

\subsection{The \texorpdfstring{$a_u$}{au} Riemannian gradient}
\label{sec:rgrad}

The Riemannian gradient of $\func$ at $u \in
    \cmanif$ with respect to the metric \eqref{eq:metric} is the unique element $\grad \func(u) \in
    \Tang_u \cmanif$ such that
\begin{equation}
    \label{eq:Riemannian-gradient}
    \left< \grad E(u), v \right>_u =
    E'(u)[v], \qquad \forall v \in \Tang_u \cmanif,
\end{equation}
where $E'(u)$ is the Fréchet derivative of $\func$ at $u$. Using
\eqref{eq:au-Eprime} and \eqref{eq:metric}, we obtain
\[
    a_u(\grad E(u), v) = a_u(u, v), \qquad \forall v \in \Tang_u \cmanif. 
\]
We introduce the $a_u$-orthogonal
projection onto $\Tang_u \cmanif$, that is, the operator $\Projn_u \colon \X_N \to \Tang_u \cmanif$ such that
\begin{equation*}
    a_u(\Projn_u v - v, w) = 0\qquad \text{for all }w\in\Tang_u\cmanif, \, v\in X_N.
\end{equation*}
It follows from the above equations that
\[
    \grad E(u) = \Projn_u u
\]
is the Riemannian gradient on the manifold $\cmanif$ equipped with the
$a_u(\cdot, \cdot)$ metric. A different choice of metric gives rise to a
different Riemannian gradient. The other metric that is relevant for this
manuscript is the one induced by the $H^1$ scalar product, which induces the
$H^1$ Sobolev gradient flow that we use in some of the numerical experiments. We
do not treat this method here theoretically, but we refer to
\cite{henning_sobolev_2020,chenConvergenceSobolevGradient2024,henningGrossPitaevskiiEquation2025} for a
presentation and analysis of the different gradient flows.

\paragraph{Computation of $\Projn_u$.}
In this section, we
derive a computable expression for the projection $\Projn_u$. The following
result, giving an explicit formula for the $a_u$-orthogonal projection onto
$\Tang_u\sphere$, is shown in~\cite{henning_sobolev_2020}.

\begin{lemma}
    Let $\Gop_u\colon X' \to X_N$ be such that $a_u(\Gop_u z, w) = (z, w)$ for
    all $z\in X'$, $w\in X_N$. Then, for all $u\in \sphere$, the operator
    $\Proj_u^\sphere\colon X_N \to \Tang_u \sphere$ defined by
    \[
        \Proj_u^\sphere v = v - \frac{a_u(\Gop_u u, v)}{a_u (\Gop_u u, \Gop_u u)} \Gop_u u =
        v - \frac{(u,v)}{(u,\Gop_u u)} \Gop_u u, \qquad \forall v\in X_N,
    \]
    is the $a_u$-orthogonal projection onto $\Tang_u \sphere$.
\end{lemma}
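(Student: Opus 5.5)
The plan is to verify directly the two defining properties of an $a_u$-orthogonal projection onto $\Tang_u\sphere$. These are: (i) $\Proj_u^\sphere v \in \Tang_u\sphere$ for every $v\in X_N$; (ii) $a_u(\Proj_u^\sphere v - v, w) = 0$ for every $w\in\Tang_u\sphere$. I will also check the equality of the two expressions for the coefficient. First, note that $\Gop_u$ is well defined. By the coercivity and continuity in \eqref{eq:au_coercivity}, $a_u$ is an inner product on the finite-dimensional space $X_N$. Hence, by Riesz (or Lax--Milgram), for each $z\in X'$ there is a unique $\Gop_u z\in X_N$ with $a_u(\Gop_u z,w) = (z,w)$ for all $w\in X_N$. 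Here $u\in X_N\subset L^2\subset X'$ is regarded as a functional through the $L^2$ pairing.

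Next, I will establish the equality of the two formulas. Taking $w = v$ in the definition gives $a_u(\Gop_u u, v) = (u,v)$. Taking $w=\Gop_u u$ gives $a_u(\Gop_u u,\Gop_u u) = (u,\Gop_u u)$. The denominator is nonzero because $u\neq 0$ (as $\|u\|_{L^2}=1$), so $\Gop_u u\neq 0$, and coercivity then gives $a_u(\Gop_u u,\Gop_u u)>0$.

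For (i), I compute $(u,\Proj_u^\sphere v) = (u,v) - \frac{(u,v)}{(u,\Gop_u u)}(u,\Gop_u u) = 0$, so $\Proj_u^\sphere v\in\Tang_u\sphere$. For (ii), I write $\Proj_u^\sphere v - v = -c\,\Gop_u u$ with $c$ a scalar. For any $w\in\Tang_u\sphere$ this gives $a_u(\Gop_u u, w) = (u,w) = 0$. Finally, (i) and (ii) together characterize the $a_u$-orthogonal projection, because $\Tang_u\sphere$ is a closed subspace of the inner product space $(X_N, a_u)$, and orthogonal projections onto it are unique.

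There is no real obstacle. The only point needing care is the well-posedness of $\Gop_u$ and the positivity of the denominator, both of which follow from coercivity of $a_u$ on $X_N$ and $u\neq0$. The key structural observation is that $\Tang_u\sphere$ is exactly the $a_u$-orthogonal complement of $\operatorname{span}\{\Gop_u u\}$, so the formula is the standard rank-one projection removing that component.
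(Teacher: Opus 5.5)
Your proof is correct. You verify that $\Proj_u^\sphere v\in\Tang_u\sphere$ and that $a_u(\Proj_u^\sphere v - v, w)=0$ for all $w\in\Tang_u\sphere$, using $a_u(\Gop_u u,w)=(u,w)=0$; this is the same verification the paper carries out for the analogous formula in Proposition~\ref{prop:projN}. For this lemma itself the paper gives no proof and only cites \cite{henning_sobolev_2020}. Your positivity argument for the denominator also holds, since $a_u(\Gop_u u,u)=(u,u)=1$ forces $\Gop_u u\neq 0$.
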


Here, we aim to provide an analogue for the projection onto $\Tang_u\cmanif$. We
start by introducing the Galerkin inverse of $A_u$ onto $\Tang_u\manif$.
\begin{definition}
    \label{def:Hop}
    For $u \in \manif$, we define $\Hop_u \colon X' \to \Tang_u \manif$ by
    \begin{equation}
        \label{eq:Hop}
        a_u (\Hop_u v, w) = (v, w), \qquad \forall w \in \Tang_u \manif.
    \end{equation}
\end{definition}
This operator is well defined thanks to the coercivity and continuity of $a_u$
on $\Tang_u\manif\subset X_N$ and the Lax--Milgram lemma. The next result gives
the desired expression for $\Projn_u$. We denote by $\Projm_u \colon X_N \to
    \Tang_u \manif$ the $a_u$-orthogonal projection onto $\Tang_u \manif$.
\begin{proposition}
    \label{prop:projN}
    For all $u\in \cmanif$, let $\Proj_u^{\cmanif}$ be defined by
    \begin{equation}
        \label{eq:proj_N_v}
        \Projn_u v = \Proj_u^{\manif} v - \frac{(u, \Proj_u^{\manif} v)}{(u,\Hop_u u)} \Hop_u u  = \left(\Id - \frac{\Hop_uu} { (u, \Hop_uu)}\Proj^{L^2}_u  \right) \circ \Proj_u^{\manif} v, \qquad \forall v\in X_N.
    \end{equation}
    Then $\Projn_u$ is the $a_u$-orthogonal projection onto $\Tang_u\cmanif$.
\end{proposition}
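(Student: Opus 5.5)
The plan is to verify the two defining properties of an $a_u$-orthogonal projection onto $\Tang_u\cmanif$ for the operator $Q v \coloneqq \Projm_u v - \frac{(u,\Projm_u v)}{(u,\Hop_u u)}\Hop_u u$. These are: (i) $Q v\in \Tang_u\cmanif$ for every $v\in X_N$; (ii) $a_u(Qv - v, w) = 0$ for all $w\in\Tang_u\cmanif$. Uniqueness of the $a_u$-orthogonal projection, which follows from coercivity of $a_u$, then gives $Q=\Projn_u$. The second expression in \eqref{eq:proj_N_v} is just a rewriting, reading $\Proj^{L^2}_u$ as the functional $z\mapsto (u,z)$.

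Before either step I need the denominator to be nonzero. By \Cref{prop:intersection} and its proof, $u\in\Tang_u\manif$, so we may take $w=u$ in \eqref{eq:Hop}. This gives $(u,\Hop_u u)=a_u(\Hop_u u,\Hop_u u)$. This quantity is positive, since $\Hop_u u\neq 0$: otherwise $(u,u)=0$, contradicting $\|u\|_{L^2}=1$.

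For (i), both $\Projm_u v$ and $\Hop_u u$ lie in the linear space $\Tang_u\manif$, so $Qv\in\Tang_u\manif$. Moreover,
\[
(u,Qv) = (u,\Projm_u v) - \frac{(u,\Projm_u v)}{(u,\Hop_u u)}(u,\Hop_u u)=0,
\]
so $Qv\in\Tang_u\sphere$. By \Cref{prop:intersection}, $Qv\in\Tang_u\manif\cap\Tang_u\sphere=\Tang_u\cmanif$.

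For (ii), take $w\in\Tang_u\cmanif\subset\Tang_u\manif$ and split
\[
a_u(Qv-v,w)=a_u(\Projm_u v - v,w) - \frac{(u,\Projm_u v)}{(u,\Hop_u u)}\,a_u(\Hop_u u,w).
\]
The first term vanishes by the definition of $\Projm_u$, because $w\in\Tang_u\manif$. For the second term, \eqref{eq:Hop} gives $a_u(\Hop_u u,w)=(u,w)$, and this is zero because $w\in\Tang_u\sphere$.

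No step is a real obstacle. The only delicate points are that the denominator is well defined, which uses $u\in\Tang_u\manif$ via the scaling argument of \Cref{prop:intersection}, and that the tangent space intersection is given by \Cref{prop:intersection}.
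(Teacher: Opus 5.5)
Your proof is correct and is essentially the paper's argument: your (i) and (ii) are its Steps 2 and 3, and the paper's Step 1 (that the operator fixes $\Tang_u\cmanif$) follows from (i)--(ii), so omitting it is fine. Your check that $(u,\Hop_u u)>0$ is a welcome addition that the paper skips, but the attribution is slightly off: the identity $(u,\Hop_u u)=a_u(\Hop_u u,\Hop_u u)$ comes from testing \eqref{eq:Hop} with $w=\Hop_u u$, while testing with $w=u$ (the step that needs $u\in\Tang_u\manif$) gives $a_u(\Hop_u u,u)=1$ and hence $\Hop_u u\neq 0$.
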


\begin{proof}
    \textit{Step 1.} For all $v\in \Tang_u\cmanif = \Tang_u\manif \cap \Tang_u \sphere$ we have
    \begin{equation*}
        \Proj_u^{\manif} v = v\qquad \text{and}\qquad(u, v)=0.
    \end{equation*}
    It follows that $\Projn_u$ defined as in \eqref{eq:proj_N_v} satisfies $\Projn_u v = v$ for all $v\in \Tang_u \cmanif$.
    \\
    \textit{Step 2.} We show that the image of $\Projn_u$ is contained in $\Tang_u\cmanif$.
    For all $v\in\X_N$, it holds that $\Projn_u v\in \Tang_u \sphere$, since
    \[
        (\Projn_u v, u) = (\Proj_u^{\manif}v, u) - \frac{(u, \Projm_u v)}{(u, \Hop_u u)} (u, \Hop_u u)
        = (\Projm_u v, u) - (u, \Projm_u v) = 0.
    \]
    Furthermore, since by definition $\Projm_u v \in \Tang_u\manif$ and $\Hop_u u\in \Tang_u\manif$, we
    also have $\Projn_u v \in \Tang_u \manif$.
    \\
    \textit{Step 3.} We show that $a_u(\Projn_u v - v, w) = 0$ for all $v\in\X_N$ and $w\in \Tang_u \cmanif$:
    \[
        a_u(v-\Projn_u v, w) = \underbrace{a_u(v - \Projm_u v, w)}_{=0 \text{ since } w\in \Tang_u\manif} -
        \frac{(u, \Projm_u v)}{(u, \Hop_u u)} a_u (\Hop_u u, w)
        = -\frac{(u, \Projm_u v)}{(u, \Hop_u u)} (u,w) = 0,
    \]
    where the last equality follows from $w\in \Tang_u \sphere$. This concludes the proof.
\end{proof}
\paragraph{Computation of $\Proj_u^{\manif}$ and $\Hop_u$.}

We now derive computable expressions for $\Proj_u^{\manif}$ and $\Hop_u$, which
form the building blocks for $\Projn_u$.

\begin{lemma}
    \label{lem:H-formula}
    Let $\calV$ be a finite-dimensional real inner product space, let $\Tang
        \subseteq \calV$ be a subspace, and let $\mtP\colon \calV \to \calV$ be
    the orthogonal projector onto $\Tang$, i.e.,
    \[
        \range(\mtP) = \Tang, \qquad \mtP^2 = \mtP = \mtP^*,
    \]
    where $\mtP^*$ is the adjoint of $\mtP$ with respect to the inner product of $\calV$.
    Let $\mtA \colon \calV \to \calV$ be a self-adjoint positive definite linear
    operator and let $\mtB \colon \calV \to \calV$ be an arbitrary linear
    operator. The unique linear operator $\mtH \colon \calV \to \calV$
    satisfying
    \begin{enumerate}[(i)]
        \item $\range(\mtH) \subseteq \Tang$,
        \item $\inner{\mtA \mtH \vz}{\vw} = \inner{\mtB \vz}{\vw}$ for all $\vw
                  \in \Tang$, $\vz \in \calV$,
    \end{enumerate}
    is
    \begin{equation}
        \label{eq:H-matrix-formula}
        \mtH =  \mtP (\mtP\mtA\mtP)^\dagger \mtP\mtB = (\mtP\mtA\mtP)^\dagger \mtP\mtB,
    \end{equation}
    where $(\cdot)^\dagger$ denotes the Moore--Penrose pseudo-inverse.
\end{lemma}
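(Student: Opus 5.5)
The proof has two parts. First I show that the operator given by \eqref{eq:H-matrix-formula} satisfies (i) and (ii). Then I show that (i) and (ii) determine $\mtH$ uniquely. The central object is the compression $\mtP\mtA\mtP$. It is self-adjoint because $\mtP$ and $\mtA$ are, and its kernel is exactly $\Tang^\perp$. Indeed, if $\mtP\mtA\mtP\vx=0$, then $\inner{\mtA\mtP\vx}{\mtP\vx}=0$, and positive definiteness gives $\mtP\vx=0$; the converse inclusion is immediate. Since it is self-adjoint, its range is $\ker(\mtP\mtA\mtP)^\perp=\Tang$. So $\mtP\mtA\mtP$ restricts to an invertible map $\Tang\to\Tang$ and vanishes on $\Tang^\perp$. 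From the standard spectral description of the Moore--Penrose pseudo-inverse of a self-adjoint operator, $(\mtP\mtA\mtP)^\dagger$ is the inverse of this restriction on $\Tang$, extended by zero on $\Tang^\perp$. In particular its range is $\Tang$, and
\[
(\mtP\mtA\mtP)(\mtP\mtA\mtP)^\dagger=(\mtP\mtA\mtP)^\dagger(\mtP\mtA\mtP)=\mtP .
\]
Because the range of $(\mtP\mtA\mtP)^\dagger$ is $\Tang$, we have $\mtP(\mtP\mtA\mtP)^\dagger=(\mtP\mtA\mtP)^\dagger$. This gives the second equality in \eqref{eq:H-matrix-formula}.

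\emph{Existence.} Set $\mtH=(\mtP\mtA\mtP)^\dagger\mtP\mtB$. Property (i) follows from the range statement above. For (ii), take $\vw\in\Tang$, so $\vw=\mtP\vw$. Since $\mtH\vz\in\Tang$, also $\mtH\vz=\mtP\mtH\vz$. Then
\[
\inner{\mtA\mtH\vz}{\vw}=\inner{\mtA\mtP\mtH\vz}{\mtP\vw}=\inner{\mtP\mtA\mtP\mtH\vz}{\vw}.
\]
Now $\mtP\mtA\mtP\mtH\vz=(\mtP\mtA\mtP)(\mtP\mtA\mtP)^\dagger\mtP\mtB\vz=\mtP\mtP\mtB\vz=\mtP\mtB\vz$. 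Hence the expression equals $\inner{\mtP\mtB\vz}{\vw}=\inner{\mtB\vz}{\mtP\vw}=\inner{\mtB\vz}{\vw}$, using self-adjointness of $\mtP$.

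\emph{Uniqueness.} Suppose $\mtH_1$ and $\mtH_2$ both satisfy (i) and (ii), and fix $\vz\in\calV$. Then $\vd=(\mtH_1-\mtH_2)\vz$ lies in $\Tang$, and $\inner{\mtA\vd}{\vw}=0$ for every $\vw\in\Tang$. Choosing $\vw=\vd$ gives $\inner{\mtA\vd}{\vd}=0$, so $\vd=0$ by positive definiteness. Since $\vz$ was arbitrary, $\mtH_1=\mtH_2$.

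The only step that needs real care is the description of $(\mtP\mtA\mtP)^\dagger$: its range is $\Tang$, and composing it with $\mtP\mtA\mtP$ on either side gives $\mtP$. I would justify this by diagonalizing the self-adjoint operator $\mtP\mtA\mtP$ in an orthonormal eigenbasis adapted to $\Tang\oplus\Tang^\perp$. Alternatively, one can check directly that the candidate operator (inverse on $\Tang$, zero on $\Tang^\perp$) satisfies the four Penrose conditions. Everything else is routine manipulation of inner products.
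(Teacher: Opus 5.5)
Your proof is correct and follows essentially the same route as the paper. You verify that the formula satisfies (i) and (ii) using $\mtP\mtA\mtP(\mtP\mtA\mtP)^\dagger=\mtP$, and you obtain uniqueness from positive definiteness of $\mtA$, which the paper phrases as the Riesz representation theorem in the $\mtA$-inner product. You also prove $\range(\mtP\mtA\mtP)=\Tang$ through the kernel argument and justify the second equality in \eqref{eq:H-matrix-formula}, both of which the paper uses without comment.
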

\begin{proof}
    The existence and uniqueness of $\mtH$ follow from the Riesz representation
    theorem since $\mtA$ is positive definite. Indeed, for fixed $\vz$, $\mtH\vz
        \in \Tang$ is the unique vector representing the functional $\vw \mapsto
        \inner{\mtB \vz}{\vw}$ on $\Tang$ in the $\mtA$-inner product.

    For the explicit formula, we check that $\mtH = \mtP(\mtP\mtA\mtP)^\dagger
        \mtP\mtB$ satisfies (i) and (ii). Note first that $\range(\mtH)
        \subseteq \range(\mtP) = \Tang$. For (ii), if $\vw \in \Tang$, then $\vw
        = \mtP\vw$, and, using that $\mtP$ is self-adjoint,
    \begin{align*}
        \inner{\mtA \mtH \vz}{\vw} - \inner{\mtB \vz}{\vw}
         & = \inner{(\mtA \mtH - \mtB) \vz}{\mtP\vw}
        = \inner{\bigl(\mtP\mtA\mtP (\mtP\mtA\mtP)^\dagger \mtP\mtB - \mtP\mtB\bigr) \vz}{\vw}    \\
         & = \inner{\bigl(\mtP\mtA\mtP (\mtP\mtA\mtP)^\dagger - \Id\bigr) \mtP\mtB \vz}{\vw} = 0,
    \end{align*}
    since $\mtP\mtA\mtP (\mtP\mtA\mtP)^\dagger$ is the orthogonal projection
    onto $\range(\mtP\mtA\mtP) = \range(\mtP) = \Tang$ and $\mtP\mtB\vz \in
        \Tang$.
\end{proof}

We now introduce the discretization of the $a_u$ and $L^2$ inner products on
$X_N$ with respect to the basis $\Phi$, yielding the operators $\matau$ and
$\mass$. These are defined as $\matau, \mass \colon \Rnd \to \Rnd$ such that for
all $u,w,z \in X_N$, with tensor representations $\tU, \tW, \tZ$, we have
\begin{equation*}
    a_u(w, z) = \inner{\matau \tW}{\tZ},
    \qquad \innerh[L^2]{w}{z} = \inner{\mass \tW}{\tZ}.
\end{equation*}

\begin{proposition}
    \label{prop:projau_Hu}
    Let $u \in \manif$, $z \in X_N$, and denote $\tU = \analysis_N u \in
        \vecmanif$, $\tZ = \analysis_N z \in \Rnd$. Let $\Projvecm_{\tU} \colon
        \Rnd \to \Tang_{\tU} \vecmanif$ be the orthogonal projection onto
    $\Tang_{\tU} \vecmanif$ with respect to the Frobenius inner product.
    Then
    \begin{align}
        \Projm_u z & = \synthesis_N \txi,
                   & \txi                  & = \Projvecm_{\tU} (\Projvecm_{\tU} \matau \Projvecm_{\tU})^\dagger \Projvecm_{\tU} \matau \tZ, \\
        \Hop_u z   & = \synthesis_N \teta,
                   & \teta                 & = \Projvecm_{\tU} (\Projvecm_{\tU} \matau \Projvecm_{\tU})^\dagger \Projvecm_{\tU} \mass \tZ.
    \end{align}
\end{proposition}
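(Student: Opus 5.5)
The plan is to transfer both defining relations to coefficient space via the synthesis/analysis isomorphism and then apply \Cref{lem:H-formula} twice, with $\calV = \Rnd$ equipped with the Frobenius inner product, $\Tang = \Tang_{\tU}\vecmanif$, $\mtP = \Projvecm_{\tU}$ and $\mtA = \matau$.

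First, I check the hypotheses of \Cref{lem:H-formula}. Since $\Tang_u\manif = \synthesis_N(\Tang_{\tU}\vecmanif)$ and $\synthesis_N$ is a linear isomorphism, $w \in \Tang_u\manif$ if and only if $\tW = \analysis_N w \in \Tang_{\tU}\vecmanif$. The operator $\Projvecm_{\tU}$ is by assumption the Frobenius-orthogonal projector onto this linear subspace, so it is idempotent and self-adjoint. The operator $\matau$ is symmetric because $a_u$ is symmetric. It is positive definite because $a_u$ is coercive on $X_N$ by \eqref{eq:au_coercivity}: for $\tW \neq 0$ we have $w \neq 0$ in $H_0^1$, hence $\nabla w \neq 0$.

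For $\Projm_u z$, write $\Projm_u z = \synthesis_N \txi$ with $\txi \in \Tang_{\tU}\vecmanif$. The defining relation is $a_u(\Projm_u z, w) = a_u(z, w)$ for all $w \in \Tang_u\manif$. In coefficients this reads
\[
\inner{\matau \txi}{\tW} = \inner{\matau \tZ}{\tW} \qquad \text{for all } \tW \in \Tang_{\tU}\vecmanif.
\]
Applying \Cref{lem:H-formula} with $\mtB = \matau$ gives the stated formula for $\txi$, as the image of $\tZ$ under $\mtH$.

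For $\Hop_u z$, with $z \in X_N \subset X'$, the definition \eqref{eq:Hop} gives $a_u(\Hop_u z, w) = (z, w)$ for all $w \in \Tang_u\manif$. In coefficients this becomes $\inner{\matau\teta}{\tW} = \inner{\mass\tZ}{\tW}$ with $\teta \in \Tang_{\tU}\vecmanif$. Applying \Cref{lem:H-formula} with $\mtB = \mass$ yields the formula for $\teta$.

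Uniqueness in the lemma, which comes from Lax--Milgram or Riesz, then identifies these tensors with $\analysis_N \Projm_u z$ and $\analysis_N \Hop_u z$.

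There is no real obstacle here. The only care needed is to note that the Frobenius-orthogonal projector, not a mass-weighted one, is the right $\mtP$. This works because the $L^2$ and $a_u$ structure is carried entirely by $\mass$ and $\matau$ inside the pairing. The leading $\Projvecm_{\tU}$ factor matches the first form in \eqref{eq:H-matrix-formula}.
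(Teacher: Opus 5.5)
Your proposal is correct and follows essentially the same route as the paper's proof. Both rewrite the defining relations of $\Projm_u z$ and $\Hop_u z$ in coefficient space via $\synthesis_N$, then apply \Cref{lem:H-formula} with $\mtP = \Projvecm_{\tU}$, $\mtA = \matau$, and $\mtB = \matau$ or $\mtB = \mass$ respectively. Your explicit check of the lemma's hypotheses (a Frobenius-self-adjoint projector, and $\matau$ symmetric positive definite by coercivity of $a_u$) is a small addition that the paper leaves implicit.
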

\begin{proof}
    Let $\Projm_u z = \synthesis_N \txi$ and $\Hop_u z = \synthesis_N \teta$,
    with $\txi,\teta \in \Tang_{\tU} \vecmanif$. For all $\tW \in \Tang_{\tU}
        \vecmanif$, and $w = \synthesis_N \tW \in \Tang_u\manif$, we have
    \begin{align*}
        a_u(\Projm_u z, w) & = \inner{\matau \txi}{\tW} = \inner{\matau \tZ}{\tW} = a_u(z, w),    \\
        a_u(\Hop_u z, w)   & = \inner{\matau \teta}{\tW} = \inner{\mass \tZ}{\tW} = (z, w)_{L^2}.
    \end{align*}
    Applying \Cref{lem:H-formula} with $\calV = \Rnd$ endowed with the Frobenius
    inner product, $\Tang = \Tang_{\tU}\vecmanif$, $\mtP = \Projvecm_{\tU}$,
    $\mtA = \matau$, and $\mtB = \matau$ and $\mtB = \mass$ respectively, yields
    the result.
\end{proof}

\Cref{prop:projau_Hu} is what makes the whole construction affordable. The
pseudo-inverse of $\Projvecm_{\tU}\matau\Projvecm_{\tU}$ is never formed: the
operator is symmetric positive definite on $\Tang_{\tU}\vecmanif$, a space of
dimension $\calO(dnr^2)$, and the two systems appearing in \Cref{prop:projau_Hu}
are solved by conjugate gradient. Each CG iteration consists of one application
of $\matau$ to a tangent vector, of TT rank at most $2\ur$, followed by one
projection $\Projvecm_{\tU}$, so its cost is the $\calO(dnr^3)$ of the
projection plus the cost of applying $\matau$, quantified in
\Cref{sec:numerical_discretization}. The overall cost of one Riemannian gradient
is this quantity times the number of CG iterations, which is why the
preconditioners of \Cref{sec:precond} are essential.

\section{Discretization by spectral elements}
\label{sec:numerical_discretization}
Riemannian optimization requires the evaluation of the energy $\func(u)$ and the
Riemannian gradient $\grad\func(u) = \Projn_u u$ for $u\in\cmanif$. By
\Cref{prop:projau_Hu,prop:projN}, computing $\Projn_u u$ amounts to applying
$\matau$ and $\mass$ to tensors in TT format and solving linear systems on
$\Tang_{\tU}\vecmanif$ with the operator $\PAPMr \coloneqq
    \Projvecm_{\tU}\,\matau \,\Projvecm_{\tU}$. Since $\PAPMr$ is positive definite
on $\Tang_{\tU}\vecmanif$, the natural solver is the (preconditioned) conjugate
gradient method. Because tangent vectors in $\Tang_{\tU}\vecmanif$ can be
represented as TT tensors of rank at most $2\ur$, solving systems with $\PAPMr$
again reduces to applying $\matau$ and the preconditioner to TT tensors. This
section details the discretization and efficient implementation of these
operations.

For the discretization, we follow
\cite{chenFullyDiscretizedSobolev2024,liu_simple_2024} and employ the Spectral
Element Method (SEM) with numerical integration (which can also be seen as a
generalized collocation method) on quadrilateral grids. Let $\Xi$ be a partition
of $\baseint$ into $E$ elements, and define $\Voned \subseteq H^1_0(\baseint)$
as the space of continuous piecewise polynomials of degree $k$, i.e.,
\[
    \Voned = \Big\{v_h \in C^0(\baseint)\,\,:\,\, v_h(\inta) = v_h(\intb) =0,\, \restr{v_h}{e} \in \mathbb{P}^k(e)\, \forall e \in \Xi\Big\}.
\]
In one dimension, as it is classical with spectral collocation/spectral element methods, we use as a basis for $\Voned$ the
Lagrangian interpolant basis functions associated with the $(k+1)$
Gauss--Lobatto--Legendre (GLL) points on each element, and we use the same points
for quadrature
\cite{CHQZ2,deville_high-order_2002,karniadakis_spectralhp_2005,maday_spectral_1989}.
Tensorization yields the multidimensional scheme.
Using $(k+1)$ GLL nodes per element gives $n = kE-1$ interior nodes
$\{x_i\}_{i=1}^n$. Let $\phi_i$ and $m_i$ be the basis function and quadrature
weight at $x_i$. In the following, we denote by $\innercont{\cdot}{\cdot}$ the exact $L^2$ inner
product and by $\innerquad{\cdot}{\cdot}$ its quadrature approximation.

Choosing the interpolation nodes to coincide with the quadrature points is
computationally advantageous, albeit at the price of introducing a quadrature
error. See \cite{CHQZ2} for a thorough treatment of spectral element methods with numerical integration. In the one-dimensional case, it makes the mass
matrix with quadrature diagonal,
\[
    \innerquad{\phi_i}{\phi_j} = m_i \delta_{ij},
\]
i.e., the mass matrix with quadrature equals $\diag(\vm)$ with $\vm = [m_1, \cdots,
    m_n]^\top$. We exploit this by rescaling the basis functions, taking as a basis
of $\Voned$
\begin{equation}
    \label{eq:rescaled-basis}
    \Phioned = \{\tphioned_1, \dots, \tphioned_n\}, \qquad
    \tphioned_i \coloneqq \frac{\phi_i}{\sqrt{m_i}},
\end{equation}
which is orthonormal with respect to the quadrature inner product, since
$\innerquad{\tphioned_i}{\tphioned_j} = m_i \delta_{ij} / \sqrt{m_i m_j} =
    \delta_{ij}$. Consequently, the mass matrix with quadrature is the identity
in every dimension,
\[
    \masshoned = \mtI, \qquad \massh = \mtI,
\]
so that the discrete $L^2$ inner product is simply the Frobenius inner product
of the coefficient tensors, $\innerquad{w}{z} = \inner{\tW}{\tZ}$. Note that in
this basis the coefficient tensor of $u$ carries the square root of the
quadrature weights,
\begin{equation}
    \label{eq:coeff-scaling}
    \tU(\is) = \sqrt{m_{i_1} \cdots m_{i_d}}\; u(x_{i_1}, \dots, x_{i_d}).
\end{equation}

The stiffness matrix $\stiffoned$ for the one-dimensional problem is exact,
since the $(k+1)$-point Gauss--Lobatto quadrature is of order $2k-1$:
\[
    [\stiffoned]_{i,j} = \innerquad{\nabla\tphioned_i}{\nabla\tphioned_j} = \innercont{\nabla\tphioned_i}{\nabla\tphioned_j},
\]
and it is obtained from the stiffness matrix $\stiffonedlag$ in the Lagrangian
basis by the symmetric scaling $\stiffoned = \diag(\vm)^{-1/2}\, \stiffonedlag
    \, \diag(\vm)^{-1/2}$.
The stiffness matrix in $d$ dimensions has the TT representation
\begin{equation*}
    \begin{split}
        \stiffh & = \stiffoned \otimes \mtI \otimes \cdots \otimes \mtI + \dots + \mtI \otimes  \cdots \otimes \mtI \otimes \stiffoned \\
                & =
        \begin{bmatrix}
            \stiffoned & \mtI
        \end{bmatrix}
        \Join
        \begin{bmatrix}
            \mtI       & 0    \\
            \stiffoned & \mtI
        \end{bmatrix}
        \Join
        \cdots
        \Join
        \begin{bmatrix}
            \mtI       & 0    \\
            \stiffoned & \mtI
        \end{bmatrix}
        \Join
        \begin{bmatrix}
            \mtI \\
            \stiffoned
        \end{bmatrix},
    \end{split}
\end{equation*}
which has TT rank $2$. Since applying a TT matrix of TT rank $R$ to a TT tensor
of rank $r$ yields a tensor of rank at most $Rr$ and costs $\calO(d n^2 R^2
    r^2)$ operations \cite[Section~4.3]{oseledets2011tensor}, applying $\stiffh$
therefore costs $\calO(dn^2r^2)$ operations and at most doubles the rank, whereas the same
product in the full format costs $\calO(n^{d+1})$ operations.

This choice also enables efficient integration of the nonlinear-in-$u$ terms in
$\func$ and $a_u$. While the mass matrix no longer appears, by
\eqref{eq:coeff-scaling} the quadrature weights still enter these terms, through
the rank-$1$ TT tensor of reciprocal weights
\begin{equation}
    \label{eq:Mrec}
    \masshtensrec = \otimes_{i=1}^d \vmrec, \qquad \vmrec = [1/m_1, \cdots, 1/m_n]^\top,
\end{equation}
whose application is a Hadamard product. Let
\[
    \tV(i_1,\dots,i_d) = V(x_{i_1},\dots,x_{i_d}), \qquad \tV\in \Rnd,
\]
be the tensor of potential values at quadrature nodes. For $u=\synthesis_N \tU$,
the discrete energy reads
\begin{equation}
    \label{eq:E_discrete}
    \begin{split}
        \funch(u) & \coloneqq
        \frac{1}{2} \innerquad{\nabla u}{\nabla u}
        + \frac{1}{2} \innerquad{Vu}{u}
        + \frac{\beta}{2} \innerquad{u^2}{u^2}                                             \\
                  & =
        \frac{1}{2} \innersmall{\tU}{\stiffh \tU}
        + \frac{1}{2} \innersmall{\tV \odot \tU}{\tU}
        + \frac{\beta}{2} \innersmall{\tU \odot \tU}{\masshtensrec \odot (\tU \odot \tU)}. \\
    \end{split}
\end{equation}
The discrete $a_u$ inner product is
\begin{equation}
    \label{eq:au_discrete}
    \begin{split}
        \auh{w}{z} & =
        \innerquad{\nabla w}{\nabla z}
        + \innerquad{Vw}{z}
        + 2\beta\innerquad{uw}{uz} \\
                   & =
        \inner{\tW}{\stiffh \tZ}
        + \innersmall{\tV \odot \tW}{\tZ}
        + 2 \beta \innersmall{\tU \odot \tW}{\masshtensrec \odot (\tU \odot \tZ)} \eqqcolon \innersmall{\tW}{\matauh \tZ}.
    \end{split}
\end{equation}
Explicitly,
\begin{equation}
    \label{eq:mat_au_discrete}
    \matauh = \stiffh + \diag(\tV) + 2\beta \diag(\masshtensrec \odot \tU \odot \tU).
\end{equation}
All operations in \eqref{eq:E_discrete} and \eqref{eq:au_discrete} can be
performed efficiently with TT tensors.

From now on, all inner products are replaced by their discrete counterparts. For
instance, the manifold $\cmanif$ is discretized as
\[
    \veccmanif = \{\tU \in \vecmanif : \inner{\tU}{\tU} = 1\},
\]
and in \Cref{prop:projN,prop:projau_Hu} the operators $\matau$ and $\mass$ are
replaced by $\matauh$ and $\massh = \mtI$. In particular, the mass drops out of
\Cref{prop:projau_Hu}, whose second identity becomes $\teta = \Projvecm_{\tU}
    (\Projvecm_{\tU} \matauh \Projvecm_{\tU})^\dagger \Projvecm_{\tU} \tZ$.

\paragraph{Computational complexity.} The rank behaviour is what dictates the
cost of computing the discrete energy \eqref{eq:E_discrete} and the discrete
$a_u$ inner product \eqref{eq:au_discrete}. A Hadamard product of two TT tensors
of ranks $r_1$ and $r_2$ has rank at most $r_1 r_2$ and is computed in $\calO(d
    n r_1^2 r_2^2)$ operations, while the Frobenius inner product of two TT tensors
of ranks $r_1$ and $r_2$ costs $\calO(d n r_1 r_2 \min(r_1,r_2))$
\cite[Section~4]{oseledets2011tensor}. Assume that $\tV$ has TT rank $R$, small
and independent of $n$ and $d$, as is the case for all the potentials of
\Cref{sec:numexp}. Then the linear terms of \eqref{eq:E_discrete} and
\eqref{eq:au_discrete} cost $\calO(dn^2r^2)$ and $\calO(dnR^2r^2)$, respectively,
and the dominant contribution comes from the quartic term: the Hadamard products
$\tU \odot \tW$ and $\tU \odot \tZ$ raise the rank to $2r^2$, and contracting
them costs $\calO(dnr^6)$. To avoid this growth, we round the Hadamard products
back to rank $\ur$, at the $\calO(dnr^3)$ cost of \eqref{eq:retraction}, so that
the evaluations of $\funch$ and of $\matauh \tZ$ both cost $\calO(dn^2r^2 +
    dnR^2r^2 + dnr^4)$. In every case the cost is linear in $d$ and in $n$, against
the $\calO(n^d)$ of the full format.

\begin{remark}[Evaluating the energy with a higher-order quadrature rule]
    \label{rmk:highorder_quadrature}
    Employing the quadrature formula that uses the GLL points introduces an
    additional source of error, justified by the computational savings discussed
    above. At the end of the optimization process, however, the energy can be
    evaluated with a higher-order quadrature at moderate additional cost.
    For example, exact integration of the
    nonlinearity requires increasing the quadrature order from $2k-1$ to $4k$. This
    is achieved by introducing the extension operator mapping $\tU$, the tensor of
    coefficients in the rescaled basis \eqref{eq:rescaled-basis}, to $\tU_Q$, the
    tensor of values at quadrature nodes (for order $4k$). By
    \eqref{eq:coeff-scaling}, this operator first undoes the rescaling, mapping
    $\tU$ to the tensor of nodal values by a Hadamard product with
    $\masshtenssqrtrec = \otimes_{i=1}^d [m_1^{-1/2}, \cdots, m_n^{-1/2}]^\top$, and
    then evaluates at the quadrature nodes. The energy is then computed using
    \eqref{eq:E_discrete} with $\tU_Q$ and the updated quadrature points and
    weights, the latter now multiplying the nonlinear terms rather than appearing as
    reciprocals. Since both factors of the extension operator are tensor products of
    their one-dimensional counterparts, it is a rank-$1$ TT matrix, so $\tU_Q$ has
    the same TT rank as $\tU$ and the extension costs $\calO(dn^2r^2)$, still polynomial in $n$ but quadratic instead of linear.
\end{remark}

\section{Preconditioning}
\label{sec:precond}

As discussed above, after discretization, computing the Riemannian gradient
reduces to solving linear systems with $\PAhPMr \coloneqq \Projvecm_{\tU}
    \matauh \Projvecm_{\tU}$ via the conjugate gradient method. If $\matauh$ is
ill-conditioned, one can expect $\PAhPMr$ to inherit poor conditioning. In this
section, we detail the construction of efficient preconditioners for $\matauh$
and $\PAhPMr$.
\subsection{Preconditioning in the full tensor format}

\label{sec:precond_full}

Since $\matauh$ has the form \eqref{eq:mat_au_discrete}, a natural idea is to
precondition it with the stiffness matrix $\stiffh$. This is particularly
appealing because $\stiffh^{-1}$ can be applied efficiently using the Fast
Diagonalization method \cite{lynch_direct_1964}. Since the mass matrix is the
identity in the basis \eqref{eq:rescaled-basis}, this only requires the
eigendecomposition of the symmetric matrix $\stiffoned$,
\begin{equation*}
    \mtW^\top \stiffoned \mtW = \mtLambda = \diag(\vlambda),
    \qquad
    \mtW^\top \mtW = \mtI,
\end{equation*}
with $\mtW$ orthogonal.
Then $\stiffh$ admits the decomposition
\begin{equation*}
    \stiffh = \left(\mtW \otimes \cdots \otimes \mtW\right)
    \mtD
    \left(\mtW \otimes \cdots \otimes \mtW\right)^{\top},
    \quad
    \mtD \coloneqq
    \mtLambda \otimes \mtI \otimes \cdots \otimes \mtI
    + \dots +
    \mtI \otimes \cdots \otimes \mtI \otimes \mtLambda,
\end{equation*}
with inverse
\begin{equation*}
    \stiffh^{-1} = \left(\mtW \otimes \cdots \otimes \mtW\right)
    \mtD^{-1}
    \left(\mtW \otimes \cdots \otimes \mtW\right)^{\top}.
\end{equation*}
Here $\mtD = \diag(\tD)$ with $\tD(i_1,\dots,i_d) = \lambda_{i_1} + \cdots +
    \lambda_{i_d}$, so $\mtD$ is diagonal and easily inverted. The setup requires a
single $n \times n$ eigendecomposition, costing $\calO(n^3)$ operations, and each application of
$\stiffh^{-1}$ amounts to $d$ mode-wise products with $\mtW$ and $\mtW^\top$, at
a cost of $\calO(dn^{d+1})$ operations, instead of the $\calO(n^{3d})$ of a direct solve.

A more effective preconditioner is obtained by also including an approximation
of the potential term $\int_{\Omega} V w z$. We approximate $V$ by a potential
$V_0 \approx V$ of the form
\begin{equation}
    \label{eq:V0}
    V_0(x_1,\dots,x_d) = v_1(x_1) + \dots + v_d(x_d) \geq 0.
\end{equation}
Then
\[
    \innerquad{V_0 w}{z}
    = \innersmall{\tV_0 \odot \tW}{\tZ}
    = \innersmall{\mtV_0 \tW}{\tZ},
\]
where $\tV_0(i_1,\dots,i_d) = V_0(x_{i_1},\dots,x_{i_d})$, i.e.,
\[
    \tV_0 = \vv_1 \otimes \ve \otimes \cdots \otimes \ve
    + \dots +
    \ve \otimes \cdots \otimes \ve \otimes \vv_d,
    \qquad \vv_i = [v_i(x_1),\dots,v_i(x_n)]^\top,
\]
with $\ve$ the vector of all ones, and the corresponding TT matrix is
\begin{equation*}
    \mtV_0 = \diag(\tV_0) =
    \mtV_1^{\oned} \otimes \mtI \otimes \cdots \otimes \mtI
    + \dots +
    \mtI \otimes \cdots \otimes \mtI \otimes \mtV_d^{\oned},
    \qquad
    \mtV_i^{\oned} = \diag(\vv_i).
\end{equation*}
We thus precondition with
\begin{equation}
    \label{eq:Btens}
    \precon = \stiffh + \mtV_0
    = (\stiffoned + \mtV_1^{\oned}) \otimes \mtI \otimes \cdots \otimes \mtI
    + \dots
    + \mtI \otimes \cdots \otimes \mtI \otimes (\stiffoned + \mtV_d^{\oned}),
\end{equation}
which can be inverted by Fast Diagonalization. The eigendecompositions
\begin{equation*}
    \mtW_i^\top (\stiffoned + \mtV_i^{\oned}) \mtW_i = \mtLambda_i = \diag(\vlambda_i),
    \quad
    \mtW_i^\top \mtW_i = \mtI,
    \qquad i=1,\dots,d,
\end{equation*}
yield the factorization
\begin{equation*}
    \precon = \left(\mtW_1 \otimes \cdots \otimes \mtW_d\right)
    \mtK
    \left(\mtW_1 \otimes \cdots \otimes \mtW_d\right)^{\top},
    \quad
    \mtK \coloneqq
    \mtLambda_1 \otimes \mtI \otimes \cdots \otimes \mtI
    + \dots +
    \mtI \otimes \cdots \otimes \mtI \otimes \mtLambda_d,
\end{equation*}
with inverse
\begin{equation}
    \label{eq:preconfull_inv}
    \precon^{-1} = \left(\mtW_1 \otimes \cdots \otimes \mtW_d\right)
    \mtK^{-1}
    \left(\mtW_1 \otimes \cdots \otimes \mtW_d\right)^{\top}.
\end{equation}
Note that since we assumed $V_0 \geq 0$, the preconditioner $\precon$ is
symmetric positive definite and can be used in the conjugate gradient method.

\begin{remark}
    While the stiffness matrix alone may appear to be a reasonable
    preconditioner, numerical experiments show that it is not effective. This
    is consistent with
    \eqref{eq:au_coercivity}, since the continuity constant grows with
    $\norm{u}_{L^{\infty}(\Omega)}$. Incorporating an approximation of the
    potential empirically improves performance, but the continuity constant of
    $a_u$ with respect to the norm induced by $\int_{\Omega} \nabla v \cdot
        \nabla w + \int_{\Omega} V_0 v w$ still depends on
    $\norm{u}_{L^{\infty}(\Omega)}$. A preconditioner that also includes an
    approximation of the discretization of $\int_{\Omega} u^2 v z$ could further
    enhance convergence.
\end{remark}
\subsection{Preconditioning in the TT format}
\label{sec:precond_tt}
We now address the preconditioning of $\PAhPMr = \Projvecm_{\tU} \matauh
    \Projvecm_{\tU}$. If $\precon$ is an effective preconditioner for $\matauh$,
it is natural to consider $\Pprecon \coloneqq \Projvecm_{\tU} \precon
    \Projvecm_{\tU}$ as a preconditioner for $\PAhPMr$. However, applying
$\Pprecon^{-1}$ to a tangent vector $\txi \in \Tang_{\tU}\vecmanif$, i.e.,
solving
\[
    (\Projvecm_{\tU} \precon \Projvecm_{\tU}) \teta = \txi,
    \qquad \teta \in \Tang_{\tU}\vecmanif,
\]
is computationally demanding; see \cite{kressner_preconditioned_2016,bioli_preconditioned_2025}. To reduce
the complexity, we proceed in two steps.

First, we replace $\Pprecon^{-1}$ by
\[
    \Ppreconinv \coloneqq \Projvecm_{\tU}\precon^{-1}\Projvecm_{\tU}.
\]
Tangent vectors in $\Tang_{\tU}\vecmanif = \range(\Projvecm_{\tU})$ are TT tensors of rank at most
$2\ur$, but the application of $\precon^{-1}$ from \eqref{eq:preconfull_inv} is not
well-suited to the TT format. The difficulty lies in $\mtK^{-1}$, which
corresponds to the Hadamard product with the tensor $\tKinv \coloneqq 1 / \tK$,
the entry-wise reciprocal of
\[
    \tK = \vlambda_1 \otimes \ve \otimes \cdots \otimes \ve
    + \dots +
    \ve \otimes \cdots \otimes \ve \otimes \vlambda_d,
    \qquad
    \tK(i_1, \dots, i_d) = (\vlambda_1)_{i_1} + \dots + (\vlambda_d)_{i_d}.
\]
While $\tK$ has TT rank $2$, enabling efficient Hadamard products with tensors
in TT format, the same is not true for $\tKinv$. Indeed, the entry-wise
reciprocal of a TT tensor has in general full TT rank, so that forming and
applying $\tKinv$ would cost $\calO(n^d)$ and destroy the computational advantages of the TT format.

As a second step, we approximate $\precon^{-1}$ by a symmetric positive definite
operator obtained by replacing $\tKinv$ with an approximation based on
exponential sums. Following \cite{montardini_low-rank_2023}, we approximate $g(\lambda) =
    1/\lambda$ by an exponential sum and apply it entry-wise to $\tK$. We recall the
following result.

\begin{proposition}[{\cite{kressner_krylov_2010,braess_approximation_2005}}]
    \label{prop:exp_sum}
    Let $s_k(\mu) = \sum_{i=1}^k \omega_i e^{-\alpha_i \mu}$, with
    $\alpha_i,\omega_i \in \R$. Then there exist $\alpha_i > 0$, $\omega_i > 0$
    (depending on $k$ and $R > 1$) such that
    \[
        \sup_{\mu \in [1,R]} \left| \frac{1}{\mu} - s_k(\mu) \right|
        \leq 16 \exp\!\left(-\tfrac{k\pi^2}{\log(8R)}\right).
    \]
\end{proposition}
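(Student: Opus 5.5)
The plan is the classical sinc-quadrature construction (Stenger, Braess--Hackbusch), starting from the Laplace-type representation
\[
    \frac{1}{\mu} = \int_0^\infty e^{-\mu t}\,dt = \int_{-\infty}^{\infty} e^{-\mu e^{s}} e^{s}\,ds, \qquad \mu>0,
\]
obtained with the substitution $t = e^{s}$. Discretizing the right-hand side by a truncated trapezoidal (sinc) rule with step $h>0$ gives
\[
    s_k(\mu) = \sum_{j} h\,e^{jh}\,e^{-e^{jh}\mu},
\]
so the nodes are $\alpha_j = e^{jh} > 0$ and the weights are $\omega_j = h e^{jh} > 0$, as the statement requires. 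If one prefers to follow \cite{braess_approximation_2005} directly, one can instead invoke the best uniform approximation by exponential sums on $[1,R]$, whose error is governed by a Zolotarev-type problem, and then only track constants. I would present the sinc route because it is constructive.

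\emph{Step 1: discretization error.} The integrand $f_\mu(s) = e^{s-\mu e^{s}}$ extends analytically to the strip $|\operatorname{Im} s| < \pi/2$, since $\operatorname{Re}(\mu e^{s}) > 0$ there. Its $L^1$ norm on the boundary lines of a slightly narrower strip $|\operatorname{Im} s|\le d<\pi/2$ is bounded uniformly in $\mu\in[1,R]$, because the substitution $\mu e^{s}\mapsto$ new variable removes the $\mu$-dependence. The standard sinc error estimate then gives a discretization error of order $e^{-2\pi d/h}$, uniformly in $\mu$.

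\emph{Step 2: truncation error.} The sum runs over $j\in\{-M,\dots,N\}$ with $k = M+N+1$ terms. For $s\to -\infty$ the tail behaves like $\int_{-\infty}^{-Mh} e^{s}\,ds = e^{-Mh}$, uniformly in $\mu\ge 1$. For $s\to+\infty$ it behaves like $e^{-\mu e^{Nh}}\le e^{-e^{Nh}}$, which is worst at $\mu=1$. The correct dependence on $R$ comes from rescaling: one approximates $1/(\mu/R)$ on $[1/R,1]$, or equivalently centers the quadrature window according to $\log R$. The balancing to aim for is $e^{-2\pi d/h}\approx e^{-Mh}$ together with $e^{Nh}\approx \log(\text{accuracy}^{-1})$, where the double-exponential decay makes $N$ cheap. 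Choosing $h \approx \pi\sqrt{2d/M}$ gives the usual $\exp(-c\sqrt{k})$ rate.

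\emph{Main obstacle.} The estimate in the statement is of the form $\exp(-k\pi^2/\log(8R))$: it is linear in $k$ with a $\log R$ denominator, which is much stronger than the $\exp(-c\sqrt{k})$ that plain sinc quadrature delivers. Reaching it requires the nonlinear best-approximation theory of Braess--Hackbusch. By an equioscillation argument one shows that the best exponential-sum approximation of $1/\mu$ on $[1,R]$ exists with positive $\alpha_i,\omega_i$. One then bounds its error via rational approximation of $1/x$ on $[1,R]$, through the map $\mu\mapsto e^{-\alpha\mu}$ and the Zolotarev/elliptic-function estimates that produce the factors $\pi^2$ and $\log(8R)$. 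My plan is to cite \cite{braess_approximation_2005} for this optimal-rate bound, rather than rederive the elliptic-integral asymptotics. I would use the sinc construction above only to justify positivity of the parameters and the qualitative exponential convergence.
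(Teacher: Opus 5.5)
Your plan ends up where the paper does: the paper gives no proof of this proposition and simply quotes the bound, including the positivity of $\alpha_i,\omega_i$, from \cite{braess_approximation_2005} via \cite{kressner_krylov_2010}, which is the result you also cite. The sinc construction is an unnecessary detour; as you note, it only gives an $\exp(-c\sqrt{k})$ rate, and it cannot justify positivity of the parameters that attain the stated bound, because those are the best-approximation parameters and not the sinc nodes and weights, so positivity must be taken from the cited result as well.
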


Applying this approximation entry-wise to $\tK$ with the substitution
\[
    \mu = \frac{\tK(i_1, \dots, i_d)}{\min(\tK)} \in [1,R],
    \qquad
    R = \frac{\max(\tK)}{\min(\tK)},
\]
where
\begin{equation*}
    \begin{split}
        \min(\tK) & = \min_{i_1,\dots,i_d}\tK(i_1, \dots, i_d) = \min(\vlambda_1) + \dots + \min(\vlambda_d),
        \\
        \max(\tK) & = \max_{i_1,\dots,i_d}\tK(i_1, \dots, i_d) = \max(\vlambda_1) + \dots + \max(\vlambda_d),
    \end{split}
\end{equation*}
yields the approximation $\tKinvapp \approx \tKinv$:
\begin{equation}
    \label{eq:tKinvapp_def}
    \tKinvapp =
    \sum_{i=1}^{k} \tilde{\omega}_{i}\,
    e^{-\tilde{\alpha}_i \vlambda_1} \otimes \cdots \otimes e^{-\tilde{\alpha}_i \vlambda_d},
    \qquad
    \tilde{\alpha}_i = \tfrac{\alpha_i}{\min(\tK)}, \quad
    \tilde{\omega}_i = \tfrac{\omega_i}{\min(\tK)},
\end{equation}
with exponentials applied entry-wise. Tables of $(\alpha_i,\omega_i)$ for
various $R$ and $k$ are given in \cite{hackbusch_computation_2019}.

The resulting approximate inverse is
\[
    \Ppreconapp \coloneqq \Projvecm \preconapp \Projvecm,
    \qquad
    \preconapp = \big(\mtW_1 \otimes \cdots \otimes \mtW_d\big)
    \diag(\tKinvapp)
    \big(\mtW_1 \otimes \cdots \otimes \mtW_d\big)^\top.
\]
Applying $\mtW = \mtW_1 \otimes \cdots \otimes \mtW_d$ and $\mtW^\top$ does not
increase TT rank, since $\mtW$ is a rank-$1$ TT matrix. The Hadamard product with
$\tKinvapp$, on the other hand, increases the rank by a factor $k$, since
$\tKinvapp$ is a sum of $k$ rank-one terms. We therefore never form it, and
apply $\Ppreconapp$ instead in the split form
\begin{equation}
    \label{eq:precon_split}
    \Ppreconapp = \sum_{i=1}^k \tilde{\omega}_{i}
    \Projvecm \Big[\mtW\,
        \diag\big(e^{-\tilde{\alpha}_i \vlambda_1}
        \otimes \cdots \otimes e^{-\tilde{\alpha}_i \vlambda_d}\big)\,
        \mtW^\top\Big] \Projvecm,
\end{equation}
one term at a time. No intermediate quantity then exceeds TT rank $2\ur$: each
term is a Hadamard product with a rank-one tensor followed by rank-preserving
operations, and the $k$ results, being tangent vectors at the same base point
$\tU$, are accumulated by summing their parametrizations $\delta G_j$. The cost
is $\calO(k\,dn^2r^2 + k\,dnr^3)$ FLOPs and $\calO(dnr^2)$ memory, respectively
a factor $k$ and $k^2$ less than forming the sum and projecting it back at once,
since both the application of $\mtW$ and the projection are quadratic in the
rank of their input. In practice, we observed that a moderate $k$ (about $k=10$)
suffices to obtain an effective preconditioner.

\begin{remark}
    Throughout this remark, the inverse is understood as the inverse on
    $\Tang_{\tU}\vecmanif$. In general,
    \[
        \P_{\precon}^{-1}
        \coloneqq
        (\Projvecm_{\tU} \precon \Projvecm_{\tU})^{-1}
        \neq
        \Projvecm_{\tU} \precon^{-1} \Projvecm_{\tU}
        \eqqcolon \P_{\precon^{-1}},
    \]
    unless the projection is $\precon$-orthogonal, which is not the case here.
    Using $\P_{\precon}$ to precondition $\PAhPMr$ has the advantage that the
    spectrum of $\P_{\precon}^{-1}\PAhPMr$ interlaces with that of
    $\precon^{-1}\matauh$ \cite{lancaster_variational_1991}. Consequently,
    $\kappa\big(\P_{\precon}^{-1}\PAhPMr\big) \leq
        \kappa\big(\precon^{-1}\matauh\big)$. In contrast, we are not aware of any analogous spectral relation for $\P_{\precon^{-1}}\PAhPMr$.
\end{remark}

\section{The multicomponent Gross--Pitaevskii equation}
\label{sec:multicomponent}

We now extend the framework of the previous sections to the multicomponent
Gross--Pitaevskii equation. The single-component case is recovered for $p=1$,
and we present the multicomponent case separately because most of the
difficulties it raises are best appreciated once the single-component
construction is understood. Throughout this section we follow
\cite{altmann_riemannian_2025} for the continuous formulation.

\subsection{Problem formulation}
\label{sec:multi-formulation}

A stationary state of a $p$-component Bose--Einstein condensate is modelled by a
$p$-frame $\phi = (\phi_1, \dots, \phi_p)$, $\phi_j\colon \Omega \to \R$, whose
components carry the prescribed masses
\begin{equation}
    \label{eq:multi-mass}
    \norm{\phi_j}_{L^2(\Omega)}^2 = N_j > 0, \qquad j = 1, \dots, p.
\end{equation}
The interaction between the components is encoded in the densities
\begin{equation}
    \label{eq:multi-density}
    \rho_j(\phi) = \sum_{i=1}^p \kappa_{ij} \abs{\phi_i}^2, \qquad j = 1, \dots, p,
\end{equation}
where the interaction matrix $\mK = [\kappa_{ij}]_{i,j=1}^p \in \R^{p\times p}$
is symmetric positive definite \cite[Assumption~A2]{altmann_riemannian_2025}. We
also assume that $\kappa_{ij}\geq 0$ for all $i$ and $j$, so that all bilinear
forms associated with the Fréchet derivative of the energy define a scalar
product (see \eqref{eq:multi-au-Eprime} and the discussion after that).
The energy functional reads
\begin{equation}
    \label{eq:multi-energy}
    \func(\phi) = \sum_{j=1}^p \int_{\Omega}
    \frac 12 \abs{\nabla \phi_j}^2 +
    \frac 12 V_j(x) \abs{\phi_j}^2 +
    \frac{\beta}{2} \rho_j(\phi) \abs{\phi_j}^2 \,dx,
\end{equation}
with external potentials $V_1, \dots, V_p \in L^{\infty}(\Omega)$ with $V_j \geq
    0$ a.e. in $\Omega$ \cite[Assumption~A1]{altmann_riemannian_2025}.
Minimizers of \eqref{eq:multi-energy} subject to \eqref{eq:multi-mass} are the
ground states, and they solve the coupled Gross--Pitaevskii equations
\[
    -\Delta \phi_j + V_j \phi_j + 2\beta\rho_j(\phi)\phi_j = \lambda_j \phi_j,
    \qquad j = 1, \dots, p,
\]
a nonlinear eigenvector problem whose eigenvalues are the chemical potentials of
the components.

\paragraph{Functional setting and the generalized oblique manifold.}
We work in the Hilbert spaces $\Lspace = [L^2(\Omega)]^p$ and $\Hspace =
    [H_0^1(\Omega)]^p$, and we introduce the diagonal matrix of component-wise
$L^2$ inner products
\[
    \dinner{v}{w} = \diag\bigl( (v_1, w_1)_{L^2(\Omega)}, \dots, (v_p, w_p)_{L^2(\Omega)} \bigr) \in \Dp,
\]
where $\Dp$ denotes the set of $p\times p$ real diagonal matrices. We set $\mN =
    \diag(N_1, \dots, N_p)$. The mass constraints \eqref{eq:multi-mass} then read
$\dinner{\phi}{\phi} = \mN$, and the admissible states form the
\emph{generalized oblique manifold}
\begin{equation}
    \label{eq:oblique}
    \OB = \left\{ \phi \in \Hspace : \dinner{\phi}{\phi} = \mN \right\}
    = \sphere_{N_1} \times \dots \times \sphere_{N_p},
    \qquad
    \sphere_{N_j} = \left\{ \phi \in H^1_0(\Omega) : \norm{\phi}^2_{L^2(\Omega)} = N_j \right\},
\end{equation}
a closed embedded submanifold of $\Hspace$ of codimension $p$, with tangent space
\[
    \Tang_\phi \OB = \left\{ z \in \Hspace : \dinner{\phi}{z} = \mathbf{0} \right\}.
\]
The manifold $\OB$ replaces the $L^2$-sphere $\sphere$ of the single-component
case, to which it reduces for $p = 1$.

The energy \eqref{eq:multi-energy} is Fréchet differentiable on $\Hspace$ with
\begin{equation}
    \label{eq:multi-au-Eprime}
    \func'(\phi)[w] = a_\phi(\phi, w), \qquad
    a_\phi(v,w) = \sum_{j=1}^p \int_{\Omega} \nabla v_j \cdot \nabla w_j
    + V_j(x)\, v_j w_j + 2\beta \rho_j(\phi)\, v_j w_j \,dx.
\end{equation}
Since $\kappa_{ij}\geq 0$ for all $i,j$, then $a_\phi$ is continuous and
coercive on $\Hspace$ and \eqref{eq:multi-au-Eprime} defines, as in
\eqref{eq:metric}, the energy-adaptive metric
$\innersmall[\phi]{v}{w} = a_\phi(v,w)$.
The associated Gross--Pitaevskii Hamiltonian $\Aop_\phi\colon \Hspace \to
    \Hspace'$ is defined by $\innersmall{\Aop_\phi v}{w} = a_\phi(v,w)$. It acts
component-wise,
\begin{equation}
    \label{eq:Aop-componentwise}
    \Aop_\phi v = (\Aop_{\phi,1} v_1, \dots, \Aop_{\phi,p} v_p),
    \qquad
    \innersmall{\Aop_{\phi,j} v_j}{w_j} = \int_{\Omega} \nabla v_j \cdot \nabla w_j
    + V_j v_j w_j + 2\beta \rho_j(\phi) v_j w_j \,dx,
\end{equation}
and consequently $\Aop_\phi(v\Lambda) = (\Aop_\phi v)\Lambda$ for every diagonal
matrix $\Lambda \in \Dp$. This component-wise structure is the source of all the
computational simplifications available on $\OB$, and, as we will see, it may
be lost in a naive application of TT-rank compression, as discussed in
\Cref{sec:multi-grad-Hu}.

\paragraph{Geometry of the oblique manifold.}
For later reference we recall the geometry of $\OB$ from
\cite{altmann_riemannian_2025}. Given a metric $g_\phi$ on $\Hspace$ with
associated operator $\Gop_\phi \colon \Hspace \to \Hspace'$,
$\innersmall{\Gop_\phi v}{w} = g_\phi(v,w)$, the $g_\phi$-orthogonal projection
onto $\Tang_\phi\OB$ is
\begin{equation}
    \label{eq:proj-OB}
    \Projob_\phi(u) = u - \Gop_\phi^{-1}\bigl(\phi\, \Sigma_{\phi,u}\bigr),
    \qquad
    \dinner{\phi}{\Gop_\phi^{-1}(\phi\,\Sigma_{\phi,u})} = \dinner{\phi}{u},
\end{equation}
where $\Sigma_{\phi,u} \in \Dp$ is uniquely determined
\cite[Proposition 9]{altmann_riemannian_2025}. Writing $\Sigma_{\phi,u} =
    \diag(\vsigma)$ and introducing the $j$-th frame
\begin{equation}
    \label{eq:frame}
    \hphi_j = (0, \dots, 0, \phi_j, 0, \dots, 0) \in \Hspace,
\end{equation}
the second identity in \eqref{eq:proj-OB} is the $p \times p$ linear system
$\mG\vsigma = \vb$ with
\begin{equation}
    \label{eq:Gsystem}
    \mG[:,j] = \diag \dinner{\Gop_\phi^{-1}\hphi_j}{\phi},
    \qquad
    \vb = \diag \dinner{\phi}{u}.
\end{equation}
Assembling $\mG$ requires $p$ applications of $\Gop_\phi^{-1}$. If, however, the
metric factorizes over the components, as $a_\phi$ does, by
\eqref{eq:Aop-componentwise}, then $\mG$ is diagonal and
\cite[Corollary 10]{altmann_riemannian_2025}
\begin{equation}
    \label{eq:Sigma-diagonal}
    \Sigma_{\phi,u} = \dinner{\phi}{\Gop_\phi^{-1}\phi}^{-1}\dinner{\phi}{u},
\end{equation}
so that a single solve suffices. In the energy-adaptive metric this yields
\begin{equation}
    \label{eq:grad-OB}
    \grad \func(\phi) = \Projob_\phi(\phi) = \phi - \Aop_\phi^{-1}\bigl(\phi\,\Sigma_{\phi,\phi}\bigr),
\end{equation}
and a retraction onto $\OB$ is given by component-wise normalization,
\begin{equation}
    \label{eq:retraction-OB}
    \Retr(v) = v \dinner{v}{v}^{-1/2}\mN^{1/2}, \qquad v \in \Hspace.
\end{equation}

\subsection{Discretization}
\label{sec:multi-discretization}

We discretize each component in the same tensor product space $\X_N$ of
\Cref{sec:numerical_discretization}, with the rescaled SEM basis
\eqref{eq:rescaled-basis}. A discrete $p$-frame is then represented by a
$(d+1)$-dimensional tensor
\[
    \tPhi \in \R^{n \times \dots \times n \times p},
    \qquad
    \phi(x) = \sum_{i_1, \dots, i_d = 1}^n \tPhi(i_1, \dots, i_d, :)\,
    \tphioned_{i_1}(x_1) \cdots \tphioned_{i_d}(x_d),
\]
the last mode indexing the components. We write $\tPhi_j = \tPhi(:, \dots, :, j)
    \in \Rnd$ for the coefficient tensor of $\phi_j$. As in
\eqref{eq:coeff-scaling}, the entries of $\tPhi$ carry the square roots of the
quadrature weights,
\[
    \tPhi(\is, j) = \sqrt{m_{i_1}\cdots m_{i_d}}\; \phi_j(x_{i_1}, \dots, x_{i_d}),
\]
and consequently the mass matrix is again the identity, $\masshp = \mtI$, while
the weights reappear in the terms that are nonlinear in $\tPhi$.

All discrete operators of \Cref{sec:numerical_discretization} extend to the
multicomponent setting by appending a single core acting on the component mode:
\begin{equation}
    \label{eq:multi-operators}
    \stiffhp = \stiffh \otimes \mtI_p,
    \qquad
    \masshtensrecp = \masshtensrec \otimes \ve_p,
    \qquad
    \couplp = \mtI \otimes \dots \otimes \mtI \otimes \mK,
\end{equation}
where $\ve_p \in \R^p$ is the vector of all ones. Since $\stiffh$ has TT rank $2$
and $\masshtensrec$ is a rank-$1$ TT tensor \eqref{eq:Mrec}, the appended core
does not increase the TT ranks: $\stiffhp$ has TT rank $2$, $\masshtensrecp$ is a
rank-$1$ TT tensor, and $\couplp$ is a rank-$1$ TT matrix acting as the identity
on all spatial modes and whose last core is the interaction matrix $\mK$ itself.
In other words, the whole coupling between the components is carried by one $p
    \times p$ core.

We collect the potentials in the tensor $\tV_p(\is, j) = V_j(x_{i_1}, \dots,
    x_{i_d})$, assumed to be of low TT rank. If, as in all our numerical
experiments, all components share the same potential $V_1 = \dots = V_p = V$,
then $\tV_p = \tV \otimes \ve_p$ has the same TT rank as $\tV$.
The discrete energy and bilinear form are then the exact counterparts of
\eqref{eq:E_discrete} and \eqref{eq:au_discrete},
\begin{equation}
    \label{eq:multi-E-discrete}
    \funch(\tPhi) =
    \frac 12 \innersmall{\tPhi}{\stiffhp \tPhi}
    + \frac 12 \innersmall{\tV_p \odot \tPhi}{\tPhi}
    + \frac{\beta}{2} \innersmall{\tPhi \odot \tPhi}{\masshtensrecp \odot \couplp (\tPhi \odot \tPhi)},
\end{equation}
\begin{equation}
    \label{eq:multi-au-discrete}
    \auhphi{\tU}{\tW} =
    \innersmall{\tU}{\stiffhp \tW}
    + \innersmall{\tV_p \odot \tU}{\tW}
    + 2\beta \innersmall{\tU \odot \tW}{\masshtensrecp \odot \couplp (\tPhi \odot \tPhi)},
\end{equation}
so that, explicitly,
\begin{equation}
    \label{eq:multi-mat_au_discrete}
    \matauhphi = \stiffhp + \diag(\tV_p) + 2\beta \diag\bigl(\masshtensrecp \odot \couplp(\tPhi \odot \tPhi)\bigr).
\end{equation}
Comparing with \eqref{eq:mat_au_discrete}, the only structural change is that the
pointwise density $\tU \odot \tU$ is replaced by $\couplp(\tPhi \odot \tPhi)$,
i.e., by the same density mixed across components by $\mK$. Every operation in
\eqref{eq:multi-E-discrete}--\eqref{eq:multi-mat_au_discrete} is therefore a TT
operation of exactly the same kind as in the single-component case, on a tensor
with one additional mode of size $p$; in particular, applying $\couplp$ costs one
$p \times p$ matrix product on the last core.

\subsection{The manifold of multicomponent unit-norm fixed TT-rank functions}
\label{sec:multi-manifold}

We keep the notation of \Cref{sec:manifolds}, with $\sphere$ replaced by $\OB$
and with $\matauh$, $\PAhPMr$ replaced by their multicomponent counterparts
$\matauhphi$ of \eqref{eq:multi-mat_au_discrete} and $\PAhPMrphi \coloneqq
    \Projvecm_{\tPhi}\,\matauhphi\,\Projvecm_{\tPhi}$.
Thus $\vecmanif$ denotes the set of TT tensors of fixed TT rank $\ur = (r_0,
    \dots, r_d, r_{d+1})$, $r_0 = r_{d+1} = 1$, in $\R^{n \times \dots \times n
        \times p}$, $\manif = \synthesis_N(\vecmanif)$ the corresponding set of
$p$-frames in $\X_N$, and
\begin{equation}
    \label{eq:multi-intersection}
    \cmanif = \manif \cap \OBN .
\end{equation}
At the tensor level we correspondingly write
\begin{equation}
    \label{eq:multi-vecN}
    \vecOB = \bigl\{ \tPhi \in \R^{n \times \dots \times n \times p} : \dinner{\tPhi}{\tPhi} = \mN \bigr\},
    \qquad
    \veccmanif = \vecmanif \cap \vecOB = \analysis_N(\cmanif),
\end{equation}
which generalizes the set $\veccmanif$ of \Cref{sec:numerical_discretization}. We will
consistently use calligraphic manifold symbols $\manif, \OBN, \cmanif$ for subsets of $\X_N$
and Fraktur ones $\vecmanif, \vecOB, \veccmanif$ for the corresponding subsets of
$\R^{n \times \dots \times n \times p}$.

\begin{proposition}
    \label{prop:multi-intersection}
    $\cmanif$ is an embedded submanifold of $\X_N$, and for all $\phi \in \cmanif$
    \[
        \Tang_\phi \cmanif = \Tang_\phi \manif \cap \Tang_\phi \OBN .
    \]
\end{proposition}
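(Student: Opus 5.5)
We follow the transversality argument of \Cref{prop:intersection}, with the single sphere replaced by the product of $p$ spheres $\OBN = \sphere_{N_1}\times\dots\times\sphere_{N_p}$ restricted to $\X_N$. First, $\OBN$ is an embedded submanifold of $\X_N$ (here $\X_N$ denotes the space of discrete $p$-frames, of dimension $pN$) of codimension $p$. It is the preimage of the regular value $\mN$ under the map $F(\phi) = \bigl(\norm{\phi_1}^2_{L^2(\Omega)},\dots,\norm{\phi_p}^2_{L^2(\Omega)}\bigr)$. Indeed, $F'(\phi)[z] = 2\bigl((\phi_1,z_1),\dots,(\phi_p,z_p)\bigr)$ is surjective as soon as every $\phi_j\neq 0$, which holds since $N_j>0$. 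The tangent space is $\Tang_\phi\OBN = \{z\in\X_N : \dinner{\phi}{z} = \mathbf{0}\}$. Moreover, the span of the $p$ frames $\hphi_1,\dots,\hphi_p$ from \eqref{eq:frame} is a complement of this tangent space, because $\dinner{\phi}{\sum_j \alpha_j \hphi_j} = \diag(\alpha_1 N_1,\dots,\alpha_p N_p)$ vanishes only for $\alpha=0$. Hence
\[
    \Tang_\phi\OBN + \operatorname{span}\{\hphi_1,\dots,\hphi_p\} = \X_N .
\]

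The key step, and the main difference from the single-component case, is to show that each $\hphi_j$ lies in $\Tang_\phi\manif$. Only $\phi = \sum_j\hphi_j$ itself is obviously tangent, so this needs an argument. Let $\tPhi = \analysis_N\phi$ have cores $G_1,\dots,G_{d+1}$, where the last core $G_{d+1}$ has the component index $j\in\idxrange{1,p}$ as its mode index. Then $\analysis_N\hphi_j$ is obtained from $\tPhi$ by zeroing all slices $G_{d+1}(i)$ with $i\neq j$. Choose $\delta G_1=\dots=\delta G_d = 0$ and $\delta G_{d+1}(i) = \delta_{ij}\,G_{d+1}(i)$ in the parametrization \eqref{eq:delta-parametrization}. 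This yields exactly $\analysis_N\hphi_j \in \Tang_{\tPhi}\vecmanif$. The argument works precisely because the component mode is the last mode of the TT and $\delta G_{d+1}$ is unconstrained by the gauge conditions \eqref{eq:gauge}; by linearity the whole span lies in $\Tang_{\tPhi}\vecmanif$. Pushing forward through the linear isomorphism $\synthesis_N$ gives $\hphi_j\in\Tang_\phi\manif$. If the component index were placed in a middle mode, one would instead perturb that core in the same way, so the ordering is not essential; with the ordering fixed in \Cref{sec:multi-discretization}, no further work is needed.

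Combining the two displays gives $\Tang_\phi\manif + \Tang_\phi\OBN = \X_N$ for all $\phi\in\cmanif$. Thus $\manif$ and $\OBN$ intersect transversally, and \cite[Corollary 3.5.13]{Abraham1988} yields both that $\cmanif$ is an embedded submanifold of $\X_N$ and that its tangent space is the intersection $\Tang_\phi\manif\cap\Tang_\phi\OBN$. This concludes the plan, exactly as in the proof of \Cref{prop:intersection}.
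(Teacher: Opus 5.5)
Your proposal is correct and follows the paper's proof essentially step by step. Both arguments show that the frames $\hphi_j$ span a complement of $\Tang_\phi\OBN$, obtain $\hphi_j\in\Tang_\phi\manif$ from the choice $\delta G_k=0$ for $k\le d$ and $\delta G_{d+1}(i)=\delta_{ij}G_{d+1}(i)$, and conclude by transversality and \cite[Corollary 3.5.13]{Abraham1988}; your regular-value argument for $\OBN$ and your explicit complement check only fill in details the paper takes for granted.
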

\begin{proof}
    As in the single-component case it suffices to prove that $\manif$ and $\OBN$
    intersect transversally, i.e., that $\Tang_\phi \manif + \Tang_\phi \OBN =
        \X_N$. Since $\Tang_\phi\OBN$ has codimension $p$ and its complement is
    spanned by the frames $\hphi_1, \dots, \hphi_p$ of \eqref{eq:frame}, it is
    enough to show $\hphi_j \in \Tang_\phi\manif$ for all $j = 1, \dots, p$. This
    holds by choosing, in the parametrization of $\Tang_{\tPhi}\vecmanif$,
    \[
        \delta G_k = 0 \quad \text{for } k = 1, \dots, d,
        \qquad
        \delta G_{d+1}(i) = \delta_{ij}\, G_{d+1}(i).
    \]
    This choice also trivially satisfies the gauge conditions \eqref{eq:gauge}.
        Transversality then follows,
    and with it the claim, by~\cite[Corollary 3.5.13]{Abraham1988}.
\end{proof}

For $\phi \in \cmanif$ and $u \in \X_N$ we denote by $\Projm_\phi$, $\Projob_\phi$ and
$\Projn_\phi$ the $a_\phi$-orthogonal projections onto $\Tang_\phi\manif$,
$\Tang_\phi\OBN$ and $\Tang_\phi\cmanif$, respectively. Exactly as in
\eqref{eq:Riemannian-gradient}, the Riemannian gradient of $\func$ on $\cmanif$ in the
$a_\phi$ metric is
\begin{equation}
    \label{eq:multi-grad}
    \grad \func(\phi) = \Projn_\phi (\phi),
\end{equation}
since $\Projn_\phi(\phi) \in \Tang_\phi\cmanif$ and, for all $v \in \Tang_\phi\cmanif$,
\[
    \func'(\phi)[v] = a_\phi(\phi, v) = a_\phi\bigl(\Projn_\phi(\phi), v\bigr),
\]
by $a_\phi$-orthogonality of the projection.

The remainder of this section is devoted to the computation of
\eqref{eq:multi-grad}. We present two routes. The first mirrors the continuous
construction \eqref{eq:proj-OB}--\eqref{eq:grad-OB} and is theoretically
transparent, but expensive; the second is the one we use in practice.

\subsection{Riemannian gradient I: the operator \texorpdfstring{$\Hop_\phi$}{Hphi}}
\label{sec:multi-grad-Hu}

As in \Cref{def:Hop}, we define $\Hop_\phi \colon \X_N \to \Tang_\phi \manif$ by
\begin{equation}
    \label{eq:multi-Hop}
    a_\phi(\Hop_\phi v, w) = (v, w)_{\Lspace}, \qquad \forall w \in \Tang_\phi\manif,
\end{equation}
which is well defined by coercivity and continuity of $a_\phi$ and the
Lax--Milgram lemma. The following is the multicomponent analogue of
\Cref{prop:projN}.

\begin{proposition}
    \label{prop:multi-projN}
    Let $\phi \in \cmanif$ and $u \in \X_N$. Then
    \begin{equation}
        \label{eq:multi-projN}
        \Projn_\phi u = \Projm_\phi u - \Hop_\phi\bigl(\phi\, \Sigma_{\phi,u}\bigr),
        \qquad \Sigma_{\phi,u} = \diag(\vsigma_{\phi,u}),
    \end{equation}
    where $\vsigma_{\phi,u} \in \R^p$ is the unique solution of the linear system
    $\mG \vsigma = \vb$ with
    \begin{equation}
        \label{eq:multi-Gsystem}
        \mG[:,j] = \diag \dinner{\Hop_\phi \hphi_j}{\phi},
        \qquad
        \vb = \diag \dinner{\Projm_\phi u}{\phi}.
    \end{equation}
\end{proposition}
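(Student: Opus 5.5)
The plan mirrors the three-step argument used for \Cref{prop:projN}, with the scalar normalization replaced by the $p\times p$ system $\mG\vsigma=\vb$. There is one preliminary step: show that $\mG$ is invertible, so that $\vsigma_{\phi,u}$ is well defined. Expanding definitions gives
\[
\mG_{ij}=(\Hop_\phi\hphi_j,\hphi_i)_{\Lspace}.
\]
By \Cref{prop:multi-intersection}, each $\hphi_i\in\Tang_\phi\manif$. Testing \eqref{eq:multi-Hop} with $w=\Hop_\phi\hphi_i$ then yields
\[
\mG_{ij}=a_\phi(\Hop_\phi\hphi_j,\Hop_\phi\hphi_i),
\]
so $\mG$ is the Gram matrix of $\{\Hop_\phi\hphi_j\}_j$ in the coercive form $a_\phi$. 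It is therefore symmetric positive semidefinite. It is definite because $\sum_j c_j\Hop_\phi\hphi_j=0$ forces $\bigl(\sum_j c_j\hphi_j,w\bigr)_{\Lspace}=0$ for all $w\in\Tang_\phi\manif$. Taking $w=\sum_j c_j\hphi_j$ gives $\sum_j c_j^2N_j=0$, hence $c=0$.

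\textit{Step 1 (identity on $\Tang_\phi\cmanif$).} Let $u\in\Tang_\phi\cmanif=\Tang_\phi\manif\cap\Tang_\phi\OBN$. Then $\Projm_\phi u=u$ and $\vb=\diag\dinner{u}{\phi}=0$. Hence $\vsigma=0$ and $\Projn_\phi u=u$.

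\textit{Step 2 (range).} Write $\phi\,\Sigma_{\phi,u}=\sum_j\sigma_j\hphi_j$. By linearity, $\Hop_\phi(\phi\Sigma)=\sum_j\sigma_j\Hop_\phi\hphi_j$, which lies in $\Tang_\phi\manif$ together with $\Projm_\phi u$. The component-wise constraint is
\[
\diag\dinner{\Projn_\phi u}{\phi}=\vb-\mG\vsigma=0,
\]
which is exactly how $\mG$ and $\vb$ were set up in \eqref{eq:multi-Gsystem}. Hence $\Projn_\phi u\in\Tang_\phi\OBN$.

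\textit{Step 3 ($a_\phi$-orthogonality).} Take $w\in\Tang_\phi\cmanif$. Then $a_\phi(u-\Projm_\phi u,w)=0$ because $w\in\Tang_\phi\manif$. By \eqref{eq:multi-Hop},
\[
a_\phi\bigl(\Hop_\phi(\phi\Sigma),w\bigr)=(\phi\Sigma,w)_{\Lspace}=\sum_j\sigma_j(\phi_j,w_j)=0,
\]
since $\dinner{\phi}{w}=0$. Steps 1–3 together characterize $\Projn_\phi$ as the $a_\phi$-orthogonal projection onto $\Tang_\phi\cmanif$, which proves the proposition.

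The main point needing care is invertibility of $\mG$. In the multicomponent TT setting, $\Hop_\phi$ need not act component-wise, because $\Tang_\phi\manif$ couples components. So $\mG$ is genuinely full, and the Gram-matrix argument, which relies on $\hphi_i\in\Tang_\phi\manif$, is what replaces the positivity of $(u,\Hop_u u)$ used in the scalar case.
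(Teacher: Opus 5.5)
Your proof is correct. Its three steps match the paper's proof: membership in $\Tang_\phi\manif$, the constraint $\diag\dinner{\Projn_\phi u}{\phi}=\vb-\mG\vsigma=0$, and $a_\phi$-orthogonality against $\Tang_\phi\cmanif$. Your Step~1 (identity on $\Tang_\phi\cmanif$) is redundant once range and orthogonality are established, but harmless.

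Where you genuinely differ is the invertibility of $\mG$. The paper proves this separately in \Cref{lem:multi-G-invertible} by a kernel argument. If $\mG\valpha=0$, then $\Hop_\phi(\phi\diag\valpha)$ lies in $\Tang_\phi\manif\cap\Tang_\phi\OBN$ and is $a_\phi$-orthogonal to that space, so it vanishes. Testing its defining relation with $\phi\diag\valpha\in\Tang_\phi\manif$ then gives $\sum_i\alpha_i^2N_i=0$. Your Gram identity $\mG_{ij}=a_\phi(\Hop_\phi\hphi_i,\Hop_\phi\hphi_j)$ reaches $\Hop_\phi(\phi\diag\valpha)=0$ directly from $\valpha^\top\mG\valpha=0$, without passing through $\Tang_\phi\OBN$. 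It also proves more: $\mG$ is symmetric positive definite. This is not evident from the definition of $\mG$, since $\Hop_\phi$ does not act component-wise on $\cmanif$. The final step, which uses $\hphi_j\in\Tang_\phi\manif$ and $(\hphi_i,\hphi_j)_{\Lspace}=N_i\delta_{ij}$, is the same in both.

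One bookkeeping slip: testing the relation for $\Hop_\phi\hphi_j$ with $w=\Hop_\phi\hphi_i$ produces $\mG_{ji}$, not $\mG_{ij}$. You need the symmetry of $a_\phi$, or the reversed test, to conclude. Also, $\hphi_i\in\Tang_\phi\manif$ is not used in the Gram identity itself, only in the definiteness step.
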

\begin{proof}
    \textit{Step 1.} Both $\Projm_\phi u$ and $\Hop_\phi(\phi\Sigma_{\phi,u})$
    belong to $\Tang_\phi\manif$, hence so does $\Projn_\phi u$.
    \\
    \textit{Step 2.} For all $w \in \Tang_\phi\cmanif = \Tang_\phi\manif \cap
        \Tang_\phi\OBN$,
    \[
        a_\phi(\Projn_\phi u, w)
        = a_\phi(\Projm_\phi u, w) - a_\phi\bigl(\Hop_\phi(\phi\Sigma_{\phi,u}), w\bigr)
        = a_\phi(u, w) - (\phi\Sigma_{\phi,u}, w)_{\Lspace}
        = a_\phi(u,w),
    \]
    where the second equality uses $w \in \Tang_\phi\manif$ together with
    \eqref{eq:multi-Hop}, and the last one follows from
    \[
        (\phi\Sigma_{\phi,u}, w)_{\Lspace} = \sum_{i=1}^p [\vsigma_{\phi,u}]_i\, (\phi_i, w_i)_{L^2(\Omega)} = 0,
        \qquad w \in \Tang_\phi \OBN .
    \]
    \\
    \textit{Step 3.} It remains to choose $\vsigma_{\phi,u}$ so that $\Projn_\phi
        u \in \Tang_\phi\OBN$. By linearity of $\Hop_\phi$ and
    \eqref{eq:frame}, $\Hop_\phi(\phi\Sigma_{\phi,u}) = \sum_{j}
        [\vsigma_{\phi,u}]_j \Hop_\phi\hphi_j$, so that
    \[
        \dinner{\Projn_\phi u}{\phi}
        = \dinner{\Projm_\phi u}{\phi} - \sum_{j=1}^p [\vsigma_{\phi,u}]_j \dinner{\Hop_\phi \hphi_j}{\phi}
        = \mathbf{0}
    \]
    is exactly the system \eqref{eq:multi-Gsystem}. Its unique solvability is the
    content of \Cref{lem:multi-G-invertible} below.
\end{proof}

\begin{lemma}
    \label{lem:multi-G-invertible}
    The matrix $\mG \in \R^{p\times p}$ in \eqref{eq:multi-Gsystem} is invertible.
\end{lemma}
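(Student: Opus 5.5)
The plan is to show that $\mG$ is a Gram matrix of linearly independent vectors with respect to the $a_\phi$ inner product, and hence symmetric positive definite.

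First I will rewrite the entries of $\mG$. By \eqref{eq:multi-Gsystem} and the definition of $\dinner{\cdot}{\cdot}$, the $i$-th diagonal entry of $\dinner{\Hop_\phi\hphi_j}{\phi}$ is $((\Hop_\phi\hphi_j)_i,\phi_i)_{L^2(\Omega)}$. Since the frame $\hphi_i$ of \eqref{eq:frame} vanishes except in component $i$, this equals $(\Hop_\phi\hphi_j,\hphi_i)_{\Lspace}$. Hence
\[
\mG_{ij} = (\Hop_\phi\hphi_j,\hphi_i)_{\Lspace}.
\]

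Next I will use the key fact, established in the proof of \Cref{prop:multi-intersection}, that $\hphi_i\in\Tang_\phi\manif$ for every $i$. This makes $\Hop_\phi\hphi_i$ an admissible test function in \eqref{eq:multi-Hop}. Taking $v=\hphi_j$ and $w=\Hop_\phi\hphi_i\in\Tang_\phi\manif$ there gives
\[
a_\phi(\Hop_\phi\hphi_j,\Hop_\phi\hphi_i) = (\hphi_j,\Hop_\phi\hphi_i)_{\Lspace} = \mG_{ij}.
\]
So $\mG$ is the Gram matrix of the vectors $\Hop_\phi\hphi_1,\dots,\Hop_\phi\hphi_p$ in the inner product $a_\phi$. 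By symmetry of $a_\phi$, $\mG$ is symmetric. By coercivity of $a_\phi$, it is positive semidefinite.

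It then remains to show that these $p$ vectors are linearly independent, which I expect to be the only genuinely nontrivial step. Suppose $\sum_j\sigma_j\Hop_\phi\hphi_j = 0$, i.e.\ $\Hop_\phi(\phi\Sigma)=0$ with $\Sigma=\diag(\vsigma)$. Since $\phi\Sigma=\sum_j\sigma_j\hphi_j\in\Tang_\phi\manif$, I may test \eqref{eq:multi-Hop} with $v=w=\phi\Sigma$. This yields
\[
0 = a_\phi(\Hop_\phi(\phi\Sigma),\phi\Sigma) = (\phi\Sigma,\phi\Sigma)_{\Lspace} = \sum_j\sigma_j^2 N_j.
\]
Because every $N_j>0$ by \eqref{eq:multi-mass}, this forces $\vsigma=0$. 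Equivalently, $\vsigma^\top\mG\vsigma = \sum_j\sigma_j^2N_j$, so $\mG$ is positive definite and therefore invertible.

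The main point to handle carefully is the membership $\hphi_j\in\Tang_\phi\manif$. Without it, the test functions above would not be admissible and the Gram-matrix identification would fail; this is exactly what the transversality argument in \Cref{prop:multi-intersection} supplies.
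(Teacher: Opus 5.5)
Your proof is correct, and it reaches the paper's key identity by a slightly different route. The paper argues on the kernel. From $\mG\valpha=0$ it reads off that $h\coloneqq\Hop_\phi(\phi\diag\valpha)$ lies in $\Tang_\phi\OBN$, hence in $\Tang_\phi\manif\cap\Tang_\phi\OBN$. By \eqref{eq:multi-Hop}, $h$ is also $a_\phi$-orthogonal to that same space, so $h=0$. The final step, testing with $\phi\diag\valpha\in\Tang_\phi\manif$ to get $\sum_i\alpha_i^2N_i=0$, is exactly yours.

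You instead identify $\mG_{ij}=a_\phi(\Hop_\phi\hphi_i,\Hop_\phi\hphi_j)$, which gives $\valpha^\top\mG\valpha=a_\phi(h,h)$ at once. This avoids $\Tang_\phi\OBN$ altogether. It also proves more than invertibility: $\mG$ is symmetric positive definite.

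Two small bookkeeping corrections. First, the Gram identification only needs $\Hop_\phi\hphi_j\in\Tang_\phi\manif$, which holds by the definition of $\Hop_\phi$. The membership $\hphi_j\in\Tang_\phi\manif$ is therefore not what makes these test functions admissible. It is used only in your linear-independence step, where you test with $\phi\Sigma$. Second, your displayed chain literally produces $\mG_{ji}$ rather than $\mG_{ij}$, which is immaterial.

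You should delete the final ``Equivalently'' sentence, because the identity $\vsigma^\top\mG\vsigma=\sum_j\sigma_j^2N_j$ is false. In fact $\vsigma^\top\mG\vsigma=(\Hop_\phi(\phi\Sigma),\phi\Sigma)_{\Lspace}=a_\phi(\Hop_\phi(\phi\Sigma),\Hop_\phi(\phi\Sigma))$. By contrast, $\sum_j\sigma_j^2N_j=(\phi\Sigma,\phi\Sigma)_{\Lspace}=a_\phi(\Hop_\phi(\phi\Sigma),\phi\Sigma)$. Replacing $a_\phi$ by $c\,a_\phi$ divides the first quantity by $c$ and leaves the second unchanged, so they cannot coincide in general.

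Positive definiteness must instead come from what you proved just before: $\mG$ is a Gram matrix of linearly independent vectors. Quantitatively, you can apply Cauchy--Schwarz to the second identity to get $\vsigma^\top\mG\vsigma\geq\bigl(\sum_j\sigma_j^2N_j\bigr)^2/a_\phi(\phi\Sigma,\phi\Sigma)$ for $\vsigma\neq 0$.
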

\begin{proof}
    Let $\valpha \in \R^p$ be such that $\mG\valpha = 0$. Then, for all $i = 1,
        \dots, p$,
    \[
        0 = \sum_{j=1}^p \alpha_j \bigl( (\Hop_\phi \hphi_j)_i, \phi_i \bigr)_{L^2(\Omega)}
        = \bigl( (\Hop_\phi(\phi \diag \valpha))_i, \phi_i \bigr)_{L^2(\Omega)},
    \]
    that is, $\dinner{\Hop_\phi(\phi\diag\valpha)}{\phi} = \mathbf{0}$, so that
    $\Hop_\phi(\phi\diag\valpha) \in \Tang_\phi\OBN$. On the other hand, for every
    $w \in \Tang_\phi\manif \cap \OBN$,
    \[
        a_\phi\bigl(\Hop_\phi(\phi\diag\valpha), w\bigr)
        = (\phi\diag\valpha, w)_{\Lspace}
        = \sum_{i=1}^p \alpha_i (\phi_i, w_i)_{L^2(\Omega)} = 0 .
    \]
    Since $\Hop_\phi(\phi\diag\valpha) \in \Tang_\phi\manif \cap \Tang_\phi\OBN$,
    taking $w = \Hop_\phi(\phi\diag\valpha)$ and using the coercivity of $a_\phi$
    gives $\Hop_\phi(\phi\diag\valpha) = 0$. Finally, $\phi\diag\valpha \in
        \Tang_\phi\manif$ by the proof of \Cref{prop:multi-intersection}, hence
    \[
        0 = a_\phi\bigl(\Hop_\phi(\phi\diag\valpha), \phi\diag\valpha\bigr)
        = (\phi\diag\valpha, \phi\diag\valpha)_{\Lspace}
        = \sum_{i=1}^p \alpha_i^2 \norm{\phi_i}^2_{L^2(\Omega)}
        = \sum_{i=1}^p \alpha_i^2 N_i,
    \]
    which forces $\valpha = 0$.
\end{proof}

Taking $u = \phi$ in \Cref{prop:multi-projN} and using $\Projm_\phi\phi = \phi$
yields the multicomponent counterpart of \eqref{eq:grad-OB},
\begin{equation}
    \label{eq:multi-grad-Hu}
    \grad \func(\phi) = \phi - \Hop_\phi\bigl(\phi\,\Sigma_{\phi,\phi}\bigr),
    \qquad \vb = \diag\dinner{\phi}{\phi} = [N_1, \dots, N_p]^\top .
\end{equation}

\paragraph{Cost, and why the TT format is the obstruction.}
Formula \eqref{eq:multi-grad-Hu} is the exact analogue of the continuous
expression \eqref{eq:grad-OB}, and it is the formula one would use in the full
tensor format. There, however, it is much cheaper than it looks: by
\eqref{eq:Aop-componentwise} the Hamiltonian $\Aop_\phi$ is block diagonal with
respect to the component index, so the metric $a_\phi$ factorizes over the
components, the matrix $\mG$ of \eqref{eq:Gsystem} is diagonal, and
\eqref{eq:Sigma-diagonal} gives $\Sigma_{\phi,\phi}$ in closed form after \emph{a
    single linear solve} with $\matauh$.

This simplification is lost on $\cmanif$. The tangent space $\Tang_\phi\manif$ does
not split over the components: a TT tensor of rank $\ur$ ties all $p$
components to the same cores $G_1, \dots, G_d$, and only the last core
distinguishes them. Consequently $\Hop_\phi$ does not act component-wise,
$\mG$ in \eqref{eq:multi-Gsystem} is full, and assembling it requires applying
$\Hop_\phi$ once per component. Each such application is, by
\Cref{prop:projau_Hu}, a preconditioned CG solve with $\PAhPMr$ on
$\Tang_{\tPhi}\vecmanif$. The gradient \eqref{eq:multi-grad-Hu} therefore costs
$p+1$ such solves ($p$ for the columns of $\mG$, one for
$\Hop_\phi(\phi\Sigma_{\phi,\phi})$) against the single solve of the full
format. The extra cost is thus imposed entirely by the low-rank format, and not
by the multicomponent nature of the problem.

\subsection{Riemannian gradient II: change of metric}
\label{sec:multi-grad-PAPN}

The alternative to the strategy presented in the previous section is to bypass $\Hop_\phi$ altogether: compute the Riemannian
gradient of $\func$ with respect to a second metric $g$, for which it is
directly available, and then convert it into the gradient with the metric
induced by $a_\phi$. We describe
the construction first in the continuous setting, where it is
only formal for the metric we are interested in, and then in its discrete
counterpart, which is the one we implement and is well posed.

\paragraph{The continuous change of metric.}
Let $g_\phi$ be a metric on $\Hspace$ and let $\Projng{g}_\phi$ denote the
$g_\phi$-orthogonal projection onto $\Tang_\phi\cmanif$. The two gradients
$\grad\func(\phi)$ and $\grad[g]\func(\phi)$ represent the same functional,
namely the restriction of $\func'(\phi)$ to $\Tang_\phi\cmanif$, the first in the
$a_\phi$ inner product and the second in $g_\phi$:
\[
    a_\phi\bigl(\grad\func(\phi), v\bigr) = \func'(\phi)[v]
    = g_\phi\bigl(\grad[g]\func(\phi), v\bigr),
    \qquad \forall v \in \Tang_\phi\cmanif .
\]
Since $\grad\func(\phi) \in \Tang_\phi\cmanif$, this is equivalent to the operator
equation on $\Tang_\phi\cmanif$
\begin{equation}
    \label{eq:change-of-metric}
    \bigl( \Projng{g}_\phi\, \Aop_\phi\, \Projng{g}_\phi \bigr) \grad \func(\phi)
    = \grad[g] \func(\phi).
\end{equation}
Whenever $\grad[g]\func(\phi)$ and $\Projng{g}_\phi$ are cheaper to evaluate than
the $a_\phi$-orthogonal projection $\Projn_\phi$ of \Cref{prop:multi-projN},
solving \eqref{eq:change-of-metric} iteratively is an attractive alternative to
\eqref{eq:multi-grad-Hu}.

We take for $g$ the $L^2$ metric of \cite[Section 4.3.1]{altmann_riemannian_2025},
\begin{equation}
    \label{eq:L2-metric}
    g_{\Lspace}(z,y) = (z,y)_{\Lspace}, \qquad z, y \in \Tang_\phi\OB,
\end{equation}
for two reasons. First, $g_{\Lspace}$ does not depend on $\phi$ and is of the additive
form \eqref{eq:oblique}, so that the corresponding orthogonal projection onto
$\Tang_\phi\OB$ is available in closed form
\cite[eq.~(4.9)]{altmann_riemannian_2025},
\begin{equation}
    \label{eq:proj-L2}
    \Projobl_\phi(u) = u - \phi \dinner{\phi}{u}\mN^{-1};
\end{equation}
in contrast with \eqref{eq:proj-OB}, no operator has to be inverted. Second, after discretization $g_{\Lspace}$ becomes the Frobenius inner product of
the coefficient tensors, which is the inner product in which automatic
differentiation natively returns gradients.

This choice comes at a price at the continuous level. The metric $g_{\Lspace}$ violates
the assumptions of \Cref{sec:multi-formulation}: it is not coercive with respect
to the $\Hspace$-norm, the tangent space $\Tang_\phi\OB$ is not complete for it,
and the associated operator $\Gop_L$ is not necessarily invertible.
Consequently the $L^2$ Riemannian gradient need not exist for all $\phi \in
    \Hspace$, and neither does the $g_{\Lspace}$-orthogonal projection onto
$\Tang_\phi\cmanif$. Following \cite[Section 4.3.1]{altmann_riemannian_2025}, we
assume for the rest of this paragraph that $\phi$ is such that $\Aop_\phi \phi
    \in \Lspace$, so that all formulas are well defined. Under this assumption
$\func'(\phi) = \Aop_\phi\phi$ can be identified with an element of $\Lspace$,
the $L^2$ Riemannian gradient of $\func$ on $\cmanif$ is
\begin{equation}
    \label{eq:grad-L2-cont}
    \grad[L^2]\func(\phi) = \Projnl_\phi\bigl(\Aop_\phi \phi\bigr),
\end{equation}
and \eqref{eq:change-of-metric} reads
\begin{equation}
    \label{eq:change-of-metric-L2}
    \bigl( \Projnl_\phi\, \Aop_\phi\, \Projnl_\phi \bigr) \grad \func(\phi)
    = \grad[L^2]\func(\phi).
\end{equation}
Note that $\Projnl_\phi$ is $L^2$-orthogonal, whereas the projections
$\Projm_\phi$, $\Projob_\phi$, $\Projn_\phi$ of
\Cref{sec:multi-manifold,sec:multi-grad-Hu} are $a_\phi$-orthogonal; only the
operator $\Aop_\phi$ sandwiched in \eqref{eq:change-of-metric-L2} carries the
energy-adaptive metric.

\paragraph{The discrete counterpart.}
After discretization the difficulty disappears. By \eqref{eq:coeff-scaling} the
discrete $L^2$ inner product on $\X_N$ is the Frobenius inner product of the
coefficient tensors, which is positive definite on the finite-dimensional space
$\Tang_{\tPhi}\veccmanif$; the projections and the gradient
\eqref{eq:grad-L2-cont} therefore exist for every $\tPhi \in \veccmanif$, with no
assumption on $\tPhi$. We write
\[
    \Projvecm_{\tPhi}, \quad \Projvecob_{\tPhi}, \quad \Projvecn_{\tPhi}
    \colon \R^{n \times \dots \times n \times p} \longrightarrow
    \Tang_{\tPhi}\vecmanif, \quad \Tang_{\tPhi}\vecOB, \quad \Tang_{\tPhi}\veccmanif
\]
for the Frobenius-orthogonal projections onto the three tensor-level tangent
spaces, so that $\Projvecm_{\tPhi}$, $\Projvecob_{\tPhi}$ and $\Projvecn_{\tPhi}$
are the discrete counterparts of $\Projml_\phi$, $\Projobl_\phi$ and
$\Projnl_\phi$. Of these, only $\Projvecm_{\tPhi}$ has appeared so far, in
$\PAhPMrphi$ and in \Cref{prop:projau_Hu}: it is the standard TT tangent-space
projection. The tensor form of \eqref{eq:proj-L2} is
\begin{equation}
    \label{eq:proj-L2-disc}
    \Projvecob_{\tPhi} \tV = \tV - \tPhi \dinner{\tPhi}{\tV} \mN^{-1},
\end{equation}
and the three projections are related as follows.

\begin{lemma}
    \label{lem:proj-commute}
    For all $\tPhi \in \veccmanif$,
    $\Projvecn_{\tPhi} = \Projvecob_{\tPhi}\Projvecm_{\tPhi}
        = \Projvecm_{\tPhi}\Projvecob_{\tPhi}$.
\end{lemma}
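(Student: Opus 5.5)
The plan follows the single-component argument of \Cref{lem:proj-commute-single}: show that the two Frobenius-orthogonal projections commute, and conclude that their product is the orthogonal projection onto the intersection of their ranges. By \Cref{prop:multi-intersection}, transported to the tensor level through the linear isomorphism $\analysis_N$, that intersection is $\Tang_{\tPhi}\vecmanif \cap \Tang_{\tPhi}\vecOB = \Tang_{\tPhi}\veccmanif$. The identification uses the fact that in the rescaled basis the discrete $L^2$ inner product is the Frobenius one.

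The key step is to show that $\Projvecm_{\tPhi}$ fixes every component frame. For $j=1,\dots,p$, let $\hat{\tPhi}_j$ be the tensor equal to $\tPhi_j$ in the $j$-th slice of the last mode and zero elsewhere. This is the tensor counterpart of $\hphi_j$ in \eqref{eq:frame}. The proof of \Cref{prop:multi-intersection} exhibits $\hat{\tPhi}_j \in \Tang_{\tPhi}\vecmanif$ explicitly: take $\delta G_k=0$ for $k\le d$ and $\delta G_{d+1}(i)=\delta_{ij}G_{d+1}(i)$. Hence $\Projvecm_{\tPhi}\hat{\tPhi}_j=\hat{\tPhi}_j$. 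Self-adjointness of $\Projvecm_{\tPhi}$ then gives, for every $\tV$,
\[
\bigl(\hat{\tPhi}_j, \Projvecm_{\tPhi}\tV\bigr) = \bigl(\hat{\tPhi}_j, \tV\bigr),
\]
which is the statement $\dinner{\tPhi}{\Projvecm_{\tPhi}\tV} = \dinner{\tPhi}{\tV}$, since the $j$-th diagonal entry of $\dinner{\tPhi}{\tV}$ is exactly $(\hat{\tPhi}_j,\tV)$.

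With this in hand, I compute both compositions from \eqref{eq:proj-L2-disc}. First,
\[
\Projvecob_{\tPhi}\Projvecm_{\tPhi}\tV = \Projvecm_{\tPhi}\tV - \tPhi\,\dinner{\tPhi}{\tV}\mN^{-1},
\]
using the identity above. Second, $\tPhi\,\dinner{\tPhi}{\tV}\mN^{-1} = \sum_j c_j \hat{\tPhi}_j$ for scalars $c_j$, so it lies in $\Tang_{\tPhi}\vecmanif$ and is fixed by $\Projvecm_{\tPhi}$. Therefore $\Projvecm_{\tPhi}\Projvecob_{\tPhi}\tV$ equals the same expression. The two projections thus commute.

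Two commuting orthogonal projections have as product the orthogonal projection onto the intersection of their ranges. Combined with the identification of that intersection with $\Tang_{\tPhi}\veccmanif$, this proves the claim.

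The only point needing care is confirming that $\Projvecob_{\tPhi}$ in \eqref{eq:proj-L2-disc} is genuinely the Frobenius-orthogonal projection. This relies on $\dinner{\tPhi}{\tPhi}=\mN$, which makes the $\hat{\tPhi}_j$ mutually orthogonal with squared norms $N_j$. Once that is checked, the rest is routine.
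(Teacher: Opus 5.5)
Your proof is correct and is essentially the paper's argument. You show $\hat{\tPhi}_j \in \Tang_{\tPhi}\vecmanif$ via the explicit $\delta$-cores of \Cref{prop:multi-intersection}, use self-adjointness of $\Projvecm_{\tPhi}$ to get $\dinner{\tPhi}{\Projvecm_{\tPhi}\tV} = \dinner{\tPhi}{\tV}$, check that both compositions equal $\tV \mapsto \Projvecm_{\tPhi}\tV - \tPhi\dinner{\tPhi}{\tV}\mN^{-1}$, and conclude with the commuting-projections fact and \Cref{prop:multi-intersection}. (Cosmetic point: the $\hat{\tPhi}_j$ are mutually orthogonal because they live on disjoint slices of the last mode, not because of $\dinner{\tPhi}{\tPhi}=\mN$, which only fixes their squared norms to $N_j$.)
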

\begin{proof}
    By \eqref{eq:proj-L2-disc} and \eqref{eq:frame}, $\Projvecob_{\tPhi}$ subtracts a
    linear combination of the frames $\hphi_1, \dots, \hphi_p$ of $\tPhi$, which
    are Frobenius-orthogonal with $\innersmall{\hphi_i}{\hphi_j} = N_i
        \delta_{ij}$. Hence $\Projvecob_{\tPhi}\tV = \tV - \sum_j
        N_j^{-1}\innersmall{\hphi_j}{\tV}\,\hphi_j$. By the proof of
    \Cref{prop:multi-intersection}, $\hphi_j \in \Tang_{\tPhi}\vecmanif$, so
    $\Projvecm_{\tPhi}\hphi_j = \hphi_j$ and
    $\innersmall{\hphi_j}{\Projvecm_{\tPhi}\tV} =
        \innersmall{\Projvecm_{\tPhi}\hphi_j}{\tV} = \innersmall{\hphi_j}{\tV}$.
    Both compositions therefore equal $\tV \mapsto \Projvecm_{\tPhi}\tV - \sum_j
        N_j^{-1}\innersmall{\hphi_j}{\tV}\hphi_j$, so
    $\Projvecm_{\tPhi}$ and $\Projvecob_{\tPhi}$ commute. Two commuting orthogonal
    projections compose to the orthogonal projection onto the intersection of
    their ranges, which by \Cref{prop:multi-intersection} is
    $\Tang_{\tPhi}\vecmanif \cap \Tang_{\tPhi}\vecOB = \Tang_{\tPhi}\veccmanif$.
\end{proof}

In particular $\Projvecn_{\tPhi}$ costs one application of $\Projvecm_{\tPhi}$
followed by a rank-$p$ correction, and by \Cref{lem:last-core} the correction
only touches the last core.

\paragraph{Computing the discrete Riemannian gradient.}
Differentiating $\funch$ with respect to the TT cores of $\tPhi$ by automatic
differentiation returns the Riemannian gradient on $\vecmanif$ with respect to
the Frobenius inner product \cite{novikov_automatic_2022}, which we denote by
$\gradF$ and refer to as the \emph{Frobenius Riemannian gradient}, as opposed to
the $a_\phi$-Riemannian gradient $\grad$ of \eqref{eq:multi-grad}:
\begin{equation}
    \label{eq:grad-F}
    \gradF\funch(\tPhi) = \Projvecm_{\tPhi}\bigl(\matauhphi \tPhi\bigr) \in \Tang_{\tPhi}\vecmanif,
    \qquad
    \funch'(\tPhi)[\tW] = \innersmall{\gradF\funch(\tPhi)}{\tW}
    \quad \forall \tW \in \Tang_{\tPhi}\vecmanif,
\end{equation}
where the identity follows from $\funch'(\tPhi)[\tW] = \auhphi{\tPhi}{\tW} =
    \innersmall{\matauhphi\tPhi}{\tW}$. This is a gradient on $\vecmanif$, not on
$\veccmanif$: the mass constraint has not yet been enforced. By
\Cref{lem:proj-commute} the Frobenius Riemannian gradient on $\veccmanif$ is obtained
by one further projection,
\begin{equation}
    \label{eq:grad-L2-N}
    \gradFN\funch(\tPhi) = \Projvecn_{\tPhi}\bigl(\matauhphi \tPhi\bigr)
    = \Projvecob_{\tPhi}\bigl(\gradF\funch(\tPhi)\bigr).
\end{equation}

Let $\tG = \analysis_N\bigl(\grad\func(\phi)\bigr) \in \Tang_{\tPhi}\veccmanif$ be the
tensor representation of the $a_\phi$-Riemannian gradient \eqref{eq:multi-grad}.
Both $\tG$ and $\gradFN\funch(\tPhi)$ represent the same functional
$\funch'(\tPhi)$ restricted to $\Tang_{\tPhi}\veccmanif$, the former in the
$\auhphi{\cdot}{\cdot}$ inner product and the latter in the Frobenius one:
\[
    \auhphi{\tG}{\tW} = \funch'(\tPhi)[\tW] = \innersmall{\gradFN\funch(\tPhi)}{\tW},
    \qquad \forall \tW \in \Tang_{\tPhi}\veccmanif .
\]
Since $\tG \in \Tang_{\tPhi}\veccmanif$, this is equivalent to the linear system on
$\Tang_{\tPhi}\veccmanif$
\begin{equation}
    \label{eq:change-of-metric-disc}
    \PAhPN\, \tG = \gradFN\funch(\tPhi),
    \qquad
    \PAhPN \coloneqq \Projvecn_{\tPhi}\, \matauhphi\, \Projvecn_{\tPhi},
\end{equation}
i.e., the discrete counterpart of \eqref{eq:change-of-metric-L2}. It is also the
exact analogue of the system with $\PAhPMrphi$ used in
\Cref{sec:multi-grad-Hu}, with the projection onto $\Tang_{\tPhi}\vecmanif$
replaced by the projection onto the smaller space $\Tang_{\tPhi}\veccmanif$: this is
what makes the mass constraint enter the linear system itself, rather than being
restored afterwards by the correction $\Hop_\phi(\phi\Sigma_{\phi,\phi})$ of
\eqref{eq:multi-grad-Hu}. The operator $\PAhPN$ is symmetric positive definite on
$\Tang_{\tPhi}\veccmanif$, so \eqref{eq:change-of-metric-disc} is solved by
preconditioned CG, with the preconditioner
$\Projvecn_{\tPhi}\,\preconapp\,\Projvecn_{\tPhi}$ obtained from the
preconditioner $\Ppreconapp$ of \Cref{sec:precond_tt} by the same replacement.
The cost is a single CG solve per gradient, independent of $p$.

\begin{remark}[Comparison of the two routes]
    \label{rem:L2-soundness}
    The two constructions differ in their robustness with respect to the
    discretization. Route~I is the discretization of a well-posed continuous
    formula: \eqref{eq:multi-grad-Hu} makes sense on $\cmanif$ for every $\phi$,
    because $a_\phi$ is coercive on $\Hspace$. Route~II is not: what
    \eqref{eq:change-of-metric-disc} computes is the \emph{discrete} $L^2$
    Riemannian gradient, which exists for every $\tPhi \in \veccmanif$ but whose
    continuous counterpart \eqref{eq:change-of-metric-L2} requires the additional
    assumption $\Aop_\phi\phi \in \Lspace$. One should therefore not expect
    route~II to be robust under mesh refinement in the sense of
    \cite{altmann_riemannian_2025}. We nonetheless use it in all experiments,
    since it is roughly $p+1$ times cheaper.
\end{remark}

\subsection{Implementation details}
\label{sec:multi-implementation}
The line-search methods of \Cref{sec:linesearch} are used unchanged; only the
retraction \eqref{eq:retraction} is modified, the normalization being performed
component-wise as in \eqref{eq:retraction-OB}, as detailed below.

Both routes require the component-wise inner products $\dinner{\cdot}{\cdot}$,
which at first sight are $p$ separate contractions of $d$-dimensional tensors. In
all the places where they occur in the algorithms above, however, one of the two
arguments is the current iterate $\phi$ itself, and they can therefore be
computed with almost no computational cost.

\begin{lemma}
    \label{lem:last-core}
    Let $\tPhi \in \vecmanif$ with cores $G_1, \dots, G_{d+1}$, and assume
    $\tPhi$ left-orthogonal in the sense of \eqref{eq:left-orth}, i.e., that
    $(\mtG_k^{\mathsf{L}})^\top\mtG_k^{\mathsf{L}} = \mtI_{r_k}$ for $k = 1,
        \dots, d$.
    Let $\tW \in \Tang_{\tPhi}\vecmanif$ be parametrized by the $\delta$-cores
    $(\delta W_1, \dots, \delta W_{d+1})$. Then
    \begin{equation}
        \label{eq:last-core}
        \dinner{\tW}{\tPhi}_{jj} = \delta W_{d+1}(j)^\top G_{d+1}(j),
        \qquad j = 1, \dots, p .
    \end{equation}
\end{lemma}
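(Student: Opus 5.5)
The plan is to write both $\tW$ and $\tPhi$ in interface form with respect to the last mode, and then let left-orthogonality collapse all spatial contractions to identities. With the $\delta$-core parametrization \eqref{eq:delta-parametrization}, the tangent vector splits as $\tW = \sum_{k=1}^{d+1} \tW^{(k)}$, where $\tW^{(k)}$ is the TT tensor whose $k$-th core is replaced by $\delta W_k$. For each component index $j$ of the last mode, I will use \eqref{eq:TT-interface} with $k=d+1$:
\[
\tPhi(i_1,\dots,i_d,j) = \tPhi_{\leq d}(i_1,\dots,i_d)\,G_{d+1}(j),
\]
where $G_{d+1}(j)\in\R^{r_d\times 1}$ since $r_{d+1}=1$. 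Then $\dinner{\tW}{\tPhi}_{jj}=\sum_{i_1,\dots,i_d}\tW(i_1,\dots,i_d,j)\,\tPhi(i_1,\dots,i_d,j)$, and it suffices to compute this contraction term by term in $k$.

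The term $k=d+1$ is direct. Here $\tW^{(d+1)}(\cdot,j)=\tPhi_{\leq d}\,\delta W_{d+1}(j)$, so its contribution is
\[
\delta W_{d+1}(j)^\top\,\tPhi_{\leq d}^\top\tPhi_{\leq d}\,G_{d+1}(j) = \delta W_{d+1}(j)^\top G_{d+1}(j),
\]
using $\tPhi_{\leq d}^\top\tPhi_{\leq d}=\mtI_{r_d}$, which is the consequence of left-orthogonality stated in \eqref{eq:left-orth}.

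For $k\le d$, I will show that the contribution vanishes by invoking the gauge condition \eqref{eq:gauge}. Write $\tW^{(k)}(\cdot,j) = \tW^{(k)}_{\leq d}\,G_{d+1}(j)$, where $\tW^{(k)}_{\leq d}$ is the left interface matrix built from $G_1,\dots,\delta W_k,\dots,G_d$. Its contribution is then $G_{d+1}(j)^\top (\tW^{(k)}_{\leq d})^\top\tPhi_{\leq d}\,G_{d+1}(j)$. To evaluate $(\tW^{(k)}_{\leq d})^\top\tPhi_{\leq d}$, I will sweep left to right:
\begin{itemize}
\item cores $1,\dots,k-1$ coincide in both factors, so by left-orthogonality the partial contraction equals $\mtI_{r_{k-1}}$;
\item at core $k$ the contraction produces $(\delta \mtW_k^{\mathsf L})^\top \mtG_k^{\mathsf L}=[\delta W_k, G_k]$, which is $0$ by the gauge condition;
\item the remaining cores $k+1,\dots,d$ then propagate a zero matrix.
\end{itemize}
Hence every term with $k\le d$ contributes zero, and summing over $k$ gives \eqref{eq:last-core}.

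The main obstacle is bookkeeping rather than mathematics. One must check that the gauge condition \eqref{eq:gauge} is imposed for all $k=1,\dots,d$ in the $(d+1)$-mode tensor, since it is stated for $j=1,\dots,d-1$ with $d$ replaced by $d+1$. One must also check that the bracket $[\cdot,\cdot]$ matches the left-unfolding contraction $(\delta\mtW_k^{\mathsf L})^\top\mtG_k^{\mathsf L}$, including its transpose convention. Both are immediate from the definitions once the indices are written out.
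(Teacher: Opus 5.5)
Your proposal is correct and follows essentially the same route as the paper: you split $\tW$ into its $d+1$ $\delta$-terms, collapse the $k=d+1$ term via $\tPhi_{\leq d}^\top\tPhi_{\leq d}=\mtI_{r_d}$, and eliminate the terms $k\le d$ with the gauge condition $[\delta W_k, G_k]=0$. Your left-to-right sweep is slightly more careful than the paper's wording, because it states explicitly that the contraction over $i_1,\dots,i_{k-1}$ must first reduce to $\mtI_{r_{k-1}}$ by left-orthogonality before the sum over $i_k$ yields the bracket.
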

\begin{proof}
    By the proof of \Cref{prop:multi-intersection}, the gauged representation of
    $\tPhi$ as an element of $\Tang_{\tPhi}\vecmanif$ is given by the
    $\delta$-cores $(\delta G_1, \dots, \delta G_{d+1}) = (0, \dots, 0,
        G_{d+1})$. We write, as in \eqref{eq:TT-interface},
    \[
        \tW(i_1, \dots, i_d, j) = \sum_{k=1}^{d+1}\tPhi_{\leq k-1}(i_1, \dots, i_{k-1})\,
        \delta W_k(i_k)\, \tPhi_{\geq k+1}(i_{k+1}, \dots, i_d, j)^\top,
    \]
    where the interface matrices have been introduced in \eqref{eq:interface} and $i_{d+1} = j$. Contracting over $i_1, \dots, i_d$, the term $k = d+1$ gives
    \[
        \sum_{i_1, \dots, i_d} \delta W_{d+1}(j)^\top \tPhi_{\leq d}(\is)^\top \tPhi_{\leq d}(\is)\, G_{d+1}(j)
        = \delta W_{d+1}(j)^\top G_{d+1}(j),
    \]
    since $\tPhi_{\leq d}^\top \tPhi_{\leq d} = \mtI_{r_d}$ by
    left-orthogonality. For $k \leq d$, the factors $\tPhi_{\geq k+1}$ do not depend on
    $i_k$, so the sum over $i_k$ can be carried out first and produces $\sum_{i_k}
        \delta W_k(i_k)^\top G_k(i_k) = [\delta W_k, G_k] = 0$ by the gauge
    conditions. Hence all terms with $k \leq d$ vanish.

\end{proof}

The point of \Cref{lem:last-core} is that the tensor operations which are
needed for the multicomponent case, and which are not part of the standard TT
toolbox, are inexpensive: they all act on the last core alone. Indeed, by
\eqref{eq:last-core} the computational cost of $\dinner{\tW}{\tPhi}$ is that of a
single contraction of the last cores, i.e., $\calO(r_d\, p)$ operations, instead
of the $p$ contractions of $d$-dimensional tensors that its definition suggests.
Similarly, the rank-one updates $\phi\,\Sigma$ appearing in
\eqref{eq:multi-projN}, \eqref{eq:proj-L2} and \eqref{eq:multi-grad-Hu} amount to
rescaling the last core of $\tPhi$ column-wise by $\vsigma$, and the frames
$\hphi_j$ of \eqref{eq:frame} are obtained by zeroing all but the $j$-th column of
$G_{d+1}$. The same observation gives the retraction: as in \eqref{eq:scheme}, a
step is followed by TT rounding to rank $\ur$ and then by the component-wise
normalization \eqref{eq:retraction-OB}, which only rescales the columns of the
last core. None of these operations affects the cost per iteration, which
remains dominated by the applications of $\matauhphi$ within the conjugate
gradient iteration.

Two further remarks on the discretization.

\begin{remark}[TT rank in the component mode]
    \label{rem:rank-component}
    The rank $r_d$, which separates the spatial modes from the component mode, is
    bounded by the number of components: since $r_{d+1} = 1$ and the last mode has
    size $p$,
    \[
        r_d \leq r_{d+1}\, p = p .
    \]
    Equivalently, the $d$-th unfolding of $\tPhi$ has only $p$ columns. In our
    numerical experiments we always take $r_d = p$, its maximal value.
\end{remark}

\begin{remark}[Preconditioner]
    The preconditioner $\precon = \stiffh + \mtV_0$ of \eqref{eq:Btens} is
    extended to the multicomponent case as $\precon_p = \precon \otimes \mtI_p$,
    consistently with \eqref{eq:multi-operators}. The eigenvalue tensor of
    \Cref{sec:precond_tt} then becomes
    \[
        \tK(i_1, \dots, i_d, j) = (\vlambda_1)_{i_1} + \dots + (\vlambda_d)_{i_d} + 1,
    \]
    so that the exponential sum \eqref{eq:tKinvapp_def} is applied on the shifted
    range $[\min(\tK) + 1, \max(\tK) + 1]$ and acquires one extra rank-one core
    $e^{-\tilde\alpha_i \ve_p}$. Everything else in \Cref{sec:precond} carries
    over verbatim.
\end{remark}

\section{Numerical experiments}
\label{sec:numexp}
We assess the performance of the proposed method on the Gross--Pitaevskii (GP)
equation, both in the single-component and in the multicomponent case.

\subsection{Setup}
\label{sec:exp_setup}
The $a_u$ Sobolev Gradient Flow (SGF) on the manifold of unit-norm fixed TT-rank
functions $\cmanif = \manif \cap \sphere$ is discretized following
\Cref{sec:numerical_discretization}. We compare it with the $a_u$ and $\rmH^1$
SGF without tensor compression, referred to as ``full'', whenever the latter are
feasible. For the full $\rmH^1$ SGF, linear systems with $\stiffh$ are solved
directly by fast diagonalization, while in the full $a_u$ SGF, linear systems
with $\matauh$ are solved by preconditioned conjugate gradient. In both the TT
and the full case, ``None'' denotes no preconditioning, ``S'' the preconditioner
based on the stiffness matrix, and ``S+V'' the preconditioner incorporating also
the approximation of the potential, as introduced in
\Cref{sec:precond_full,sec:precond_tt}.

In all figures we report the relative energy error $\abs{E(u_k) -
        E^\star}/\abs{E^\star}$, where both the energy of the current iterate and the
reference value are re-evaluated with the higher-order quadrature rule of
\Cref{rmk:highorder_quadrature}, which integrates the nonlinearity exactly.
This re-evaluation serves plotting purposes only, as it displays the true energy
error rather than one polluted by the under-integration of the SEM quadrature;
the optimization itself, including the stopping criterion, is carried out
entirely with the standard SEM quadrature rule. The reference value $E^\star$ is
computed on a finer grid and with higher TT ranks $r$, after checking that it is
indeed the lowest energy attained over all the runs of the experiment at hand,
the full-format ones included.

\paragraph{Automatic differentiation of the TT cores.}
The most time-consuming operation is the matrix-vector multiplication
\[
    \PAhPMr \tZ = \Projvecm_{\tU}\,\matauh \,\Projvecm_{\tU} \tZ,
    \qquad \tZ\in \Tang_{\tU}\vecmanif.
\]
Its efficient implementation across the whole range of mode sizes and TT
ranks we consider is a nontrivial task, which is why
we avoid implementing it altogether.
The only two quantities we implement by hand are the scalar functions
\begin{equation}
    \label{eq:scalar-implemented}
    \funch(\tX)
    \qquad \text{and} \qquad
    f(\tX) = \inner{\matauh \tZ}{\tX},
\end{equation}
namely the discrete energy \eqref{eq:E_discrete} and the discrete bilinear form
\eqref{eq:au_discrete}, both evaluated on TT tensors. Their implementation is
straightforward with the library
\texttt{ttax}\footnote{\url{https://github.com/fasghq/ttax}}, a tensor-train
toolbox built on \texttt{jax} \cite{bradbury_jax_2018}, and consists of a few
lines of TT arithmetic. Everything else is obtained by automatic
differentiation, following \cite{novikov_automatic_2022}: differentiating
\eqref{eq:scalar-implemented} with respect to the TT cores of $\tX$, at $\tX =
    \tU$, returns the projected quantities $\PAhPMr \tZ$ and the Frobenius
Riemannian gradient $\gradF\funch(\tU)$ of \eqref{eq:grad-F}
directly in the $\delta$-parametrization of $\Tang_{\tU}\vecmanif$ used in
\eqref{eq:delta-parametrization}. This is efficient because reverse-mode
automatic differentiation costs a small constant times one evaluation of the
function itself, and because the $\delta G_k$ parametrization keeps all
operations within the tangent space, with no conversion to full tensors of
dimension $n^d$. One projected matrix-vector product therefore costs
asymptotically as much as one evaluation of $f$, and requires no hand-derived
formula. The evaluation is finally wrapped in the just-in-time compilation of
\texttt{jax}, which traces the computational graph once per combination of
dimension, mode sizes and TT ranks and leaves to the compiler the order of the
tensor contractions, whose optimal choice depends on the relative size of $n$
and $r$. Thanks to the \texttt{jax} backend, the same code runs unchanged on CPU
and GPU.

\paragraph{Software, parameters, and hardware.}
For Riemannian optimization we rely on the package Pymanopt
\cite{townsend_pymanopt_2016}. All runs start from the Thomas--Fermi
approximation and are stopped when the norm of the Riemannian gradient falls
below $10^{-6}$, or after $2000$ iterations. The linear systems of
\Cref{prop:projau_Hu} and \eqref{eq:change-of-metric-disc} are solved by
preconditioned CG with tolerance $10^{-10}$ and at most $200$ iterations, and
the exponential sum \eqref{eq:tKinvapp_def} uses $k=10$ terms in all
experiments. All experiments were run in double precision on a single NVIDIA RTX
4090 GPU with 24 GB of memory. The code reproducing all the experiments of this
section will be made publicly available upon publication at
\url{https://github.com/IvanBioli/TT_for_GPE.git}.

\subsection{Algorithmic choices}
\label{sec:exp_algo}

We first compare the optimizers of \Cref{sec:linesearch} and the preconditioners
of \Cref{sec:precond} on the three-dimensional GP equation with a harmonic
potential
\[
    V(x) = |x|^2,
\]
on the domain $\Omega = (-6, 6)^3$ with $\beta = 1000$,
following~\cite{heid_gradient_2021}. In both cases we use $n=400$ grid points
per dimension and polynomial degree $k=4$, so that the tensors involved have
$n^3$ entries.

\subsubsection{Comparison of Riemannian Gradient Descent and Riemannian Nonlinear Conjugate Gradient}
\label{sec:exp_optimizer}
\Cref{fig:exp1_CGvsSD} reports the energy error against iterations and against
computational time, for the full format and for the TT format with $r = 5, 10,
    15$. R-NLCG reduces the iteration count with respect to R-GD, by about $30\%$
in the full format and at rank $r=10$. Its per-iteration cost is, however,
slightly higher, because of the transport of the previous search direction and
of the additional inner products in \eqref{eq:betaHS}.
From the point of view of computational time, the performance of the
two methods is therefore similar.

The advantage of R-NLCG is more evident on the other problems. In the
multicomponent case of \Cref{sec:multi-experiments}, where the same comparison
is reported in \Cref{tab:exp9_multicomponent}, R-NLCG roughly halves the
iteration count and remains clearly faster also in time, by up to a factor of
$\approx 3$. We observed the same behaviour, although we do not report it, for the
potentials of \Cref{sec:exp_fullTT}: the conjugate directions help the most when
the gap between the first and the second eigenvalue is small, and the harmonic
potential considered here is the most benign example in this respect.

In view of the lower iteration count and of the discussion above,
we use R-NLCG in most of the experiments that follow.
\begin{figure}
    \begin{subfigure}[b]{0.49\textwidth}
        \includegraphics[width=\textwidth]{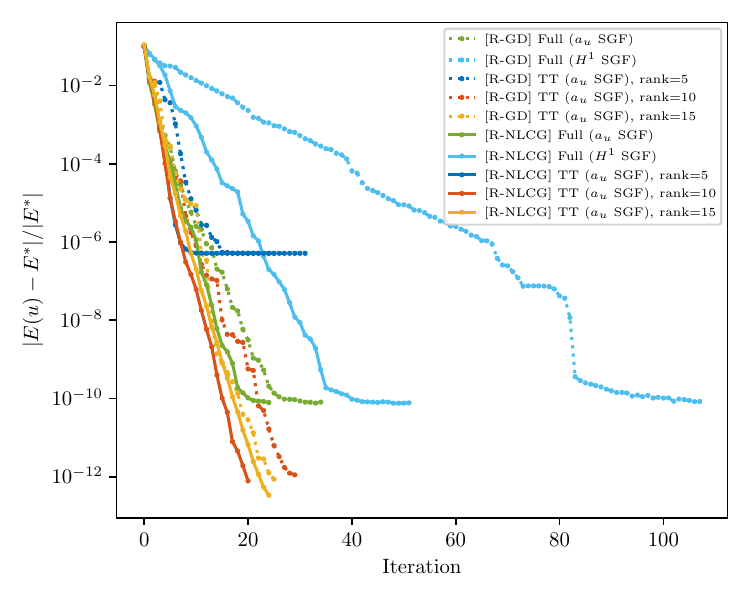}
        \caption{Energy error vs iterations.}
    \end{subfigure}
    \hfill
    \begin{subfigure}[b]{0.49\textwidth}
        \includegraphics[width=\textwidth]{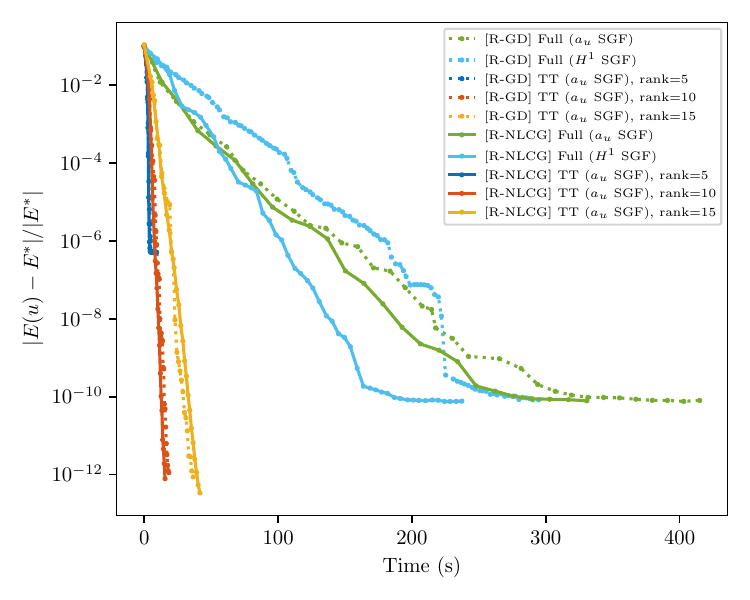}
        \caption{Energy error vs time.}
    \end{subfigure}
    \caption{Comparison of Riemannian Gradient Descent (R-GD) and Riemannian
        Nonlinear Conjugate Gradient (R-NLCG) for the $a_u$ SGF, in the full
        format and in the TT format; the full $\rmH^1$ SGF is also reported
        for reference. Three-dimensional GP equation with harmonic potential
        $V(x)=|x|^2$ on $\Omega=(-6,6)^3$ and $\beta=1000$, discretized with
        $n=400$ grid points per dimension and polynomial degree $k=4$. In the
        plot of the energy error against time, each dot corresponds to one
        iteration.}
    \label{fig:exp1_CGvsSD}
\end{figure}

\subsubsection{Comparison of preconditioners}
\label{sec:exp_precond}
\Cref{tab:exp2_precond} compares the preconditioners introduced in
\Cref{sec:precond}.
The number of iterations of Riemannian NLCG is approximately the same for
all preconditioners, as expected since the preconditioners only affect the
number of inner CG iterations that are needed to compute the gradient.
The number of those inner iterations is listed in the last column, and it can be
seen that the effect of preconditioning is more pronounced for the optimization
in the full tensor format, where using the ``S+V'' preconditioner reduced the
average number of CG iterations by an order of magnitude and the total time by a
factor of about $6$. In the TT format the reduction in CG iterations is
comparable, from about $150$ to about $20$, but the gain in time is smaller,
around a factor of $2$: applying the preconditioner in the split form
\eqref{eq:precon_split} is itself a sequence of $k=10$ projected products, whose
cost is of the same order as one For the same reason using the ``S''
preconditioner in the TT format can be slower than using no preconditioning at
all. For these reasons, we use the ``S+V'' preconditioner throughout the rest of
the numerical experiments.

\begin{table}[htbp]
    \caption{Comparison of the preconditioners of \Cref{sec:precond} for the
        $a_u$ Sobolev Gradient Flow, in the full format and in the TT format. Three-dimensional GP equation with harmonic potential
        $V(x)=|x|^2$ on $\Omega=(-6,6)^3$ and $\beta=1000$, discretized with
        $n=400$ grid points per dimension and polynomial degree $k=4$, R-NLCG
        optimizer. ``None'' denotes no preconditioning, ``S'' the preconditioner
        based on the stiffness matrix, and ``S+V'' the preconditioner
        incorporating also the approximation of the potential. The last column
        reports the average number of inner CG iterations needed to compute the
        Riemannian gradient.}
    \label{tab:exp2_precond}
    \centering
    \begin{tabular}{llcrrrr}
        \toprule
        Method                & Rank                & Precond & Final $E(u)$ & Time (s) & Iters & Average CG iters \\
        \midrule
        \multirow{3}{*}{Full} &                     & None    & 6.308835070  & 1970     & 26    & 191              \\
                              &                     & S       & 6.308835070  & 827      & 24    & 46.2             \\
                              &                     & S+V     & 6.308835070  & 331      & 24    & 14.2             \\
        \midrule
        \multirow{9}{*}{TT}   & \multirow{3}{*}{5}  & None    & 6.308838327  & 18.1     & 24    & 143              \\
                              &                     & S       & 6.308838327  & 15.1     & 23    & 51.5             \\
                              &                     & S+V     & 6.308838327  & 9.16     & 25    & 17.2             \\
        \cmidrule{2-7}
                              & \multirow{3}{*}{10} & None    & 6.308835069  & 27.2     & 20    & 162              \\
                              &                     & S       & 6.308835069  & 32.9     & 20    & 57.2             \\
                              &                     & S+V     & 6.308835069  & 15.5     & 20    & 18.9             \\
        \cmidrule{2-7}
                              & \multirow{3}{*}{15} & None    & 6.308835069  & 54.8     & 24    & 157              \\
                              &                     & S       & 6.308835069  & 84.6     & 24    & 61.3             \\
                              &                     & S+V     & 6.308835069  & 41.6     & 24    & 22.8             \\
        \bottomrule
    \end{tabular}
\end{table}

\subsection{Comparison of the full and TT formats}
\label{sec:exp_fullTT}

We now compare the full and TT formats on three potentials of increasing
``complexity''.
In all cases, we use the R-NLCG optimizer and the ``S+V'' preconditioner.

\subsubsection{Harmonic potential}
\label{sec:exp_harmonic}
We keep the setting of \Cref{sec:exp_algo} and let the mesh width vary. The
results for $n=400, 800$ and $k=4$ are shown in \Cref{fig:exp3_fullvsTT}. At $n=400$ the
TT format at rank $r=10$ reaches the same energy as the full format with a
$20$-fold reduction in computational time. This is the optimal rank among those
that we tested and in the range of energy errors that we observe.
Using tensors with fixed TT-rank $r=5$ gives an approximation that is accurate
up to $3\cdot 10^{-6}$ in relative error, at half the computational cost of
using rank $r=10$. At around $3\cdot 10^{-6}$, the error reaches a plateau.
Finally, computations with TT-rank $r=15$ do not yield significant further accuracy
gains in the range we tested, while increasing the computational time.

When we use $n=800$ points per direction in the discretization, the optimization
with full-size tensors exceeds the memory of our machine, and we are only able to
run computations in the TT format, at a cost that grows mildly with the
one-dimensional grid size: with rank $r=10$ the time doubles when $n$ doubles,
consistent with the linear-in-$n$ cost of the TT format. This is against the
cubic growth of the full format.

\begin{figure}
    \begin{subfigure}[b]{0.49\textwidth}
        \includegraphics[width=\textwidth]{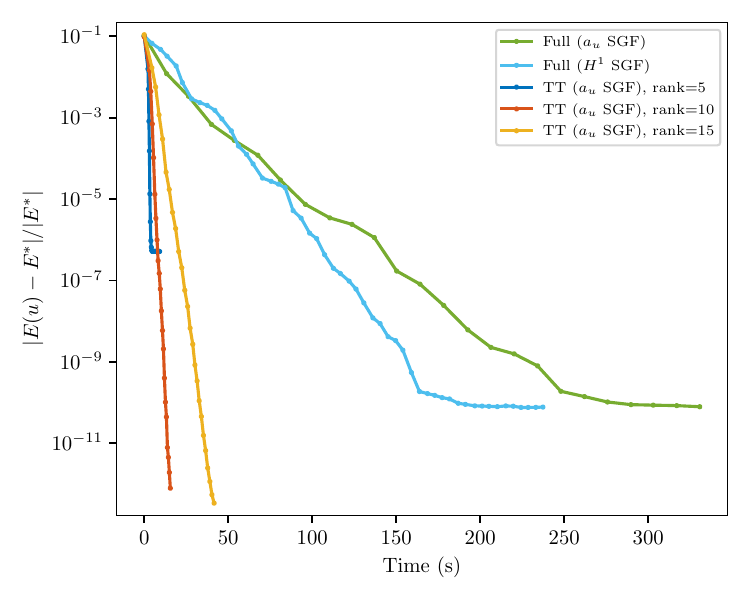}
        \caption{$n=400$ points per dimension.}
    \end{subfigure}
    \hfill
    \begin{subfigure}[b]{0.49\textwidth}
        \includegraphics[width=\textwidth]{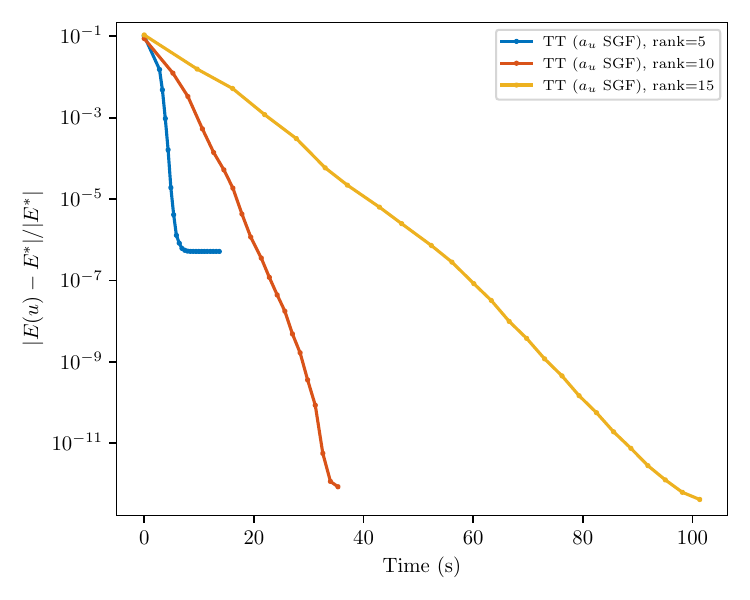}
        \caption{$n=800$ points per dimension.}
    \end{subfigure}
    \caption{Comparison of the full and TT formats for the $a_u$ SGF, with TT
        ranks $r=5,10,15$; the full $\rmH^1$ SGF is reported for reference.
        Three-dimensional GP equation with harmonic potential $V(x)=|x|^2$ on
        $\Omega=(-6,6)^3$ and $\beta=1000$, discretized with polynomial degree
        $k=4$, R-NLCG optimizer and S+V preconditioner. Energy error vs time is
        shown for $n=400$ and $n=800$ grid points per dimension; each dot
        corresponds to one iteration. At $n=800$ optimization exceeds the memory
        of the machine, and only the TT format is reported.}
    \label{fig:exp3_fullvsTT}
\end{figure}

\subsubsection{Lattice potential}
\label{sec:exp_lattice}
Next, we consider the GP equation with a lattice potential similar to the one
used in~\cite{heid_gradient_2021}, but with a higher frequency and in three
dimensions:
\[
    V(x) = |x|^2  + 40 + 40 \sin (10\pi x_1) \sin (10\pi x_2) \sin (10\pi x_3),
\]
again with $\Omega = (-6, 6)^3$ and $\beta = 1000$. Results for $n=400, 800$ and
$k=4$ are shown in \Cref{fig:exp4_lattice}. The oscillations in the potential
make the problem harder in two respects: the number of iterations to convergence
roughly doubles with respect to the harmonic case, and the final energy error is
higher. The comparison between the two formats is nevertheless unchanged: at
$n=400$ the TT format at rank $r=10$ matches the accuracy of the full format
while being about $10$ times faster, and at $n=800$ only the TT format is feasible on our
machine. However, in this case increasing the rank to $r=15$ leads to an
improvement in accuracy at $n=800$. This is consistent with the more complex
nature of the potential (and hence of the minimizer of the GP energy).

\begin{figure}
    \begin{subfigure}[b]{0.49\textwidth}
        \includegraphics[width=\textwidth]{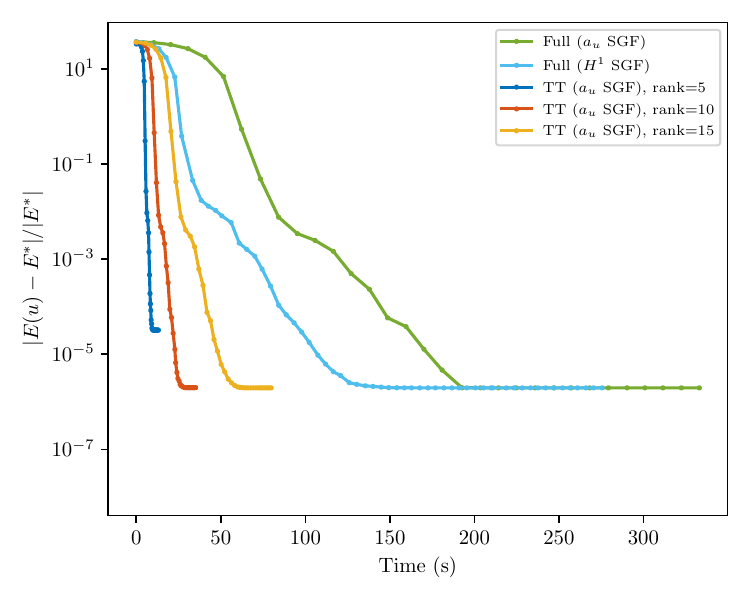}
        \caption{$n=400$ points per dimension.}
    \end{subfigure}
    \hfill
    \begin{subfigure}[b]{0.49\textwidth}
        \includegraphics[width=\textwidth]{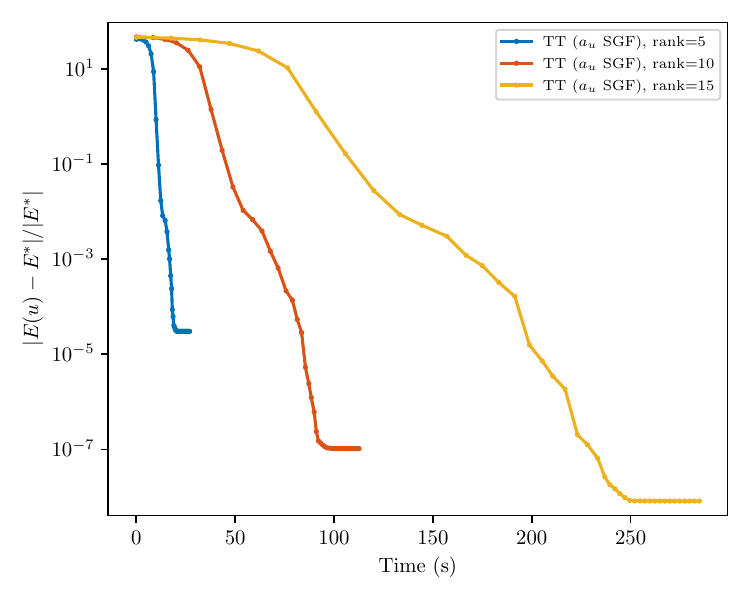}
        \caption{$n=800$ points per dimension.}
    \end{subfigure}
    \caption{Comparison of the full and TT formats for the $a_u$ SGF, with TT
        ranks $r=5,10,15$; the full $\rmH^1$ SGF is reported for reference.
        Three-dimensional GP equation with lattice potential $V(x) = |x|^2 + 40 +
            40 \sin(10\pi x_1)\sin(10\pi x_2)\sin(10\pi x_3)$ on $\Omega=(-6,6)^3$
        and $\beta=1000$, discretized with polynomial degree $k=4$, R-NLCG
        optimizer and S+V preconditioner. Energy error vs time is shown for
        $n=400$ and $n=800$ grid points per dimension; each dot corresponds to one
        iteration. At $n=800$ optimization exceeds the memory of the machine, and
        only the TT format is reported.}
    \label{fig:exp4_lattice}
\end{figure}

\subsubsection{Anderson localization in two dimensions}
\label{sec:exp_anderson}
Finally, we consider the Anderson localization effect, in the setting
of~\cite{henning_sobolev_2020} with $\Omega = (-6,6)^2$, $\beta = 10$. The domain
$\Omega$ is split into $40^2$ squares of equal size, and in each of the squares
the value of $V$ is chosen with equal probability to be either~$1$
or~$\varepsilon^{-2}$, for $\varepsilon=0.03$. The grid has to resolve each of
the squares, so that the coarsest grid we choose contains $n=400$ points per
dimension, and the potential tensor is a sum of $R = 40$ rank-$1$ terms in TT
format. This is the worst case among our examples: by the estimates of
\Cref{sec:numerical_discretization}, the potential enters the cost through
the Hadamard product $\tV \odot \tW$, which contributes $\calO(dnR^2r^2)$ and
increases the TT-rank by a factor $R$.

Results for $n=1600, 3200, 6400$ and $k=4$ are shown in
\Cref{fig:exp5_anderson}. Since here $d=2$, the full format remains feasible on
all these grids, and the comparison isolates the effect of the different scaling
in $n$: the two formats are on par at $n=1600$, whereas at $n=6400$ the TT
format is substantially faster. The ranks needed are much larger than in
the previous examples: rank $r=20$ is not sufficient and ranks $r$ between $40$
and $60$ are required to match the energy of the full format, which is
consistent with the TT rank $R=40$ of the potential. Finally, the $\rmH^1$ SGF,
which does not use the energy-adaptive metric, needs more than a thousand
iterations against the twenty-odd of the $a_u$ SGF on every grid, and is the
slowest method throughout.

\begin{figure}[htbp]
    \begin{subfigure}[b]{0.32\textwidth}
        \includegraphics[width=\textwidth]{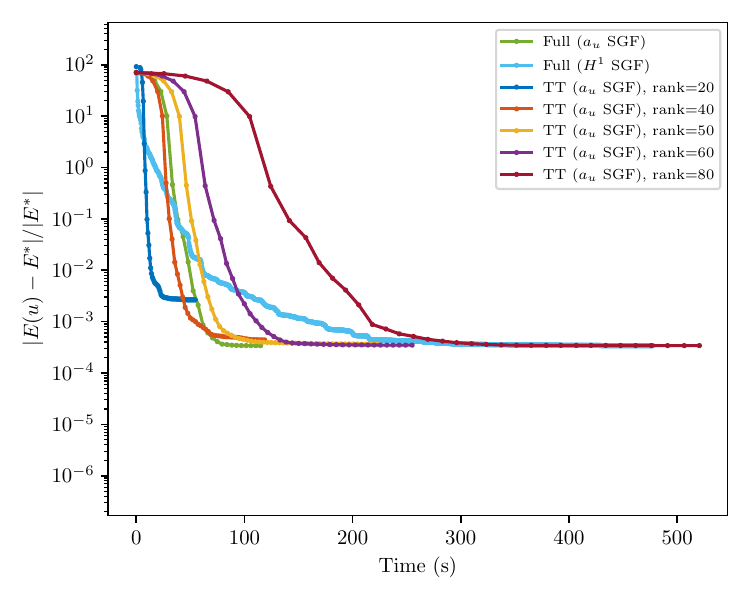}
        \caption{$n=1600$ points per dimension.}
    \end{subfigure}
    \hfill
    \begin{subfigure}[b]{0.32\textwidth}
        \includegraphics[width=\textwidth]{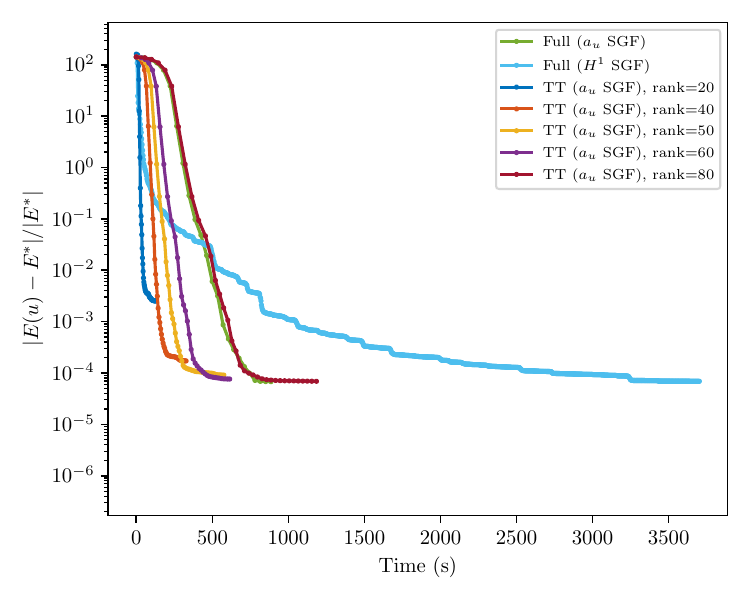}
        \caption{$n=3200$ points per dimension.}
    \end{subfigure}
    \hfill
    \begin{subfigure}[b]{0.32\textwidth}
        \includegraphics[width=\textwidth]{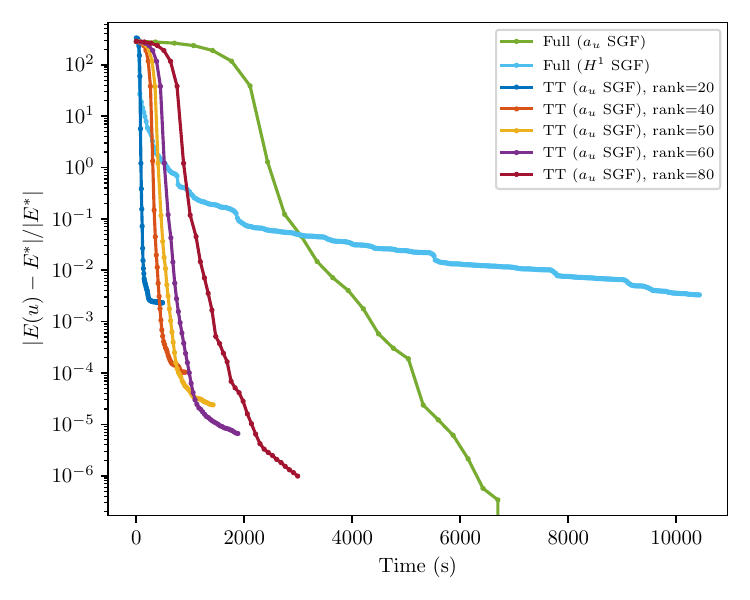}
        \caption{$n=6400$ points per dimension.}
    \end{subfigure}
    \caption{Comparison of the full and TT formats for the $a_u$ SGF, with TT
        ranks $r=20,40,50,60,80$; the full $\rmH^1$ SGF is reported for reference.
        Two-dimensional GP equation on $\Omega=(-6,6)^2$ with $\beta=10$,
        with the Anderson localization potential
        taking
        the values $1$ or $\varepsilon^{-2}$ with equal probability on each of
        $40^2$ equal squares, for $\varepsilon=0.03$; discretization with
        polynomial degree $k=4$, R-NLCG optimizer and S+V preconditioner. Energy
        error vs time is shown for $n=1600$, $n=3200$, and $n=6400$ grid points per
        dimension; each dot corresponds to one iteration.}
    \label{fig:exp5_anderson}
\end{figure}

\subsection{Multicomponent Gross--Pitaevskii equation}
\label{sec:multi-experiments}
As a final numerical example, we consider a three-component Bose--Einstein
condensate (BEC) model on the unit square $\Omega = [0,1]^2$ with a periodic
potential, following a setup similar to the one in \cite{altmann_riemannian_2025}. The
interaction parameters are $\kappa_{11}=0.5$, $\kappa_{22}=5$,
$\kappa_{33}=10$, and $\kappa_{ij}=1$ otherwise. The mass of each component
(i.e., the number of particles in the BEC model) is $N_1=N_2=N_3=1$, and $\beta
    = 10$. The potential assumes values in $\{0, 2^{12}\}$ on a checkerboard pattern
with square edge length $\epsilon=2^{-6}$.
To enforce small values near the
boundary, we additionally include the trapping potential
$V_{\mathrm{trap}}(x_1,x_2) = 10^{6}\left((2x_1-1)^{40} + (2x_2-1)^{40}\right)$.
The Riemannian gradient is computed by route~II of
\Cref{sec:multi-grad-PAPN}, and the rank in the component mode is $r_d = p = 3$,
as discussed in \Cref{rem:rank-component}.

Results for $n=2^{10}, 2^{11}$ and $k=4$ are shown in
\Cref{fig:exp9_multicomponent}. The experimental findings of the
single-component case are confirmed also in this case: at $n=2^{11}$ the TT
format at rank $r=10$ attains the energy of the full format while being $9$
times faster. Using tensors with fixed TT-rank $r=5$ gives an energy that is
accurate up to $4\cdot 10^{-6}$ in relative error, while being more than $20$
times faster than the optimization with full-size tensors. In
\Cref{tab:exp9_multicomponent} we report the comparison of the two optimization
methods presented in this paper on the same problem: R-NLCG roughly halves both
the number of iterations and the computational time with respect to R-GD, for
both formats and for all ranks, showing the effectiveness of the conjugate
directions in this more challenging problem.

\begin{figure}[htbp]
    \begin{subfigure}[b]{0.49\textwidth}
        \includegraphics[width=\textwidth]{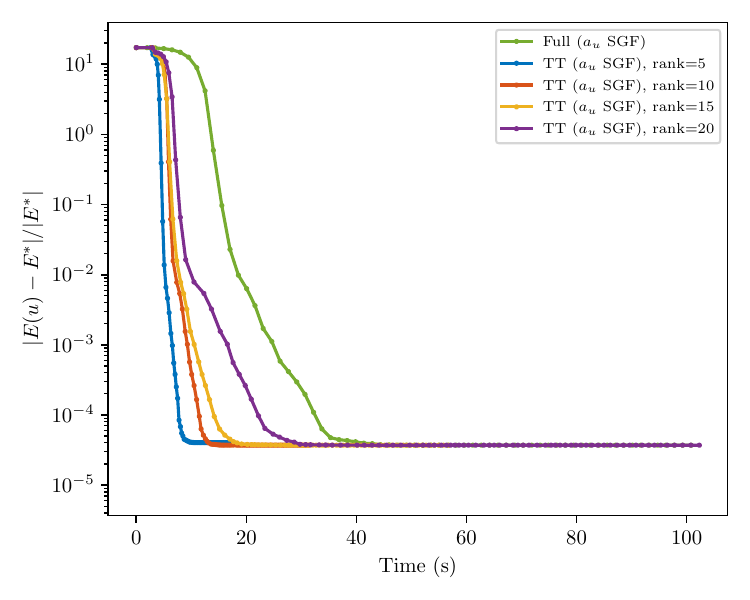}
        \caption{$n=2^{10}$ points per dimension.}
    \end{subfigure}
    \hfill
    \begin{subfigure}[b]{0.49\textwidth}
        \includegraphics[width=\textwidth]{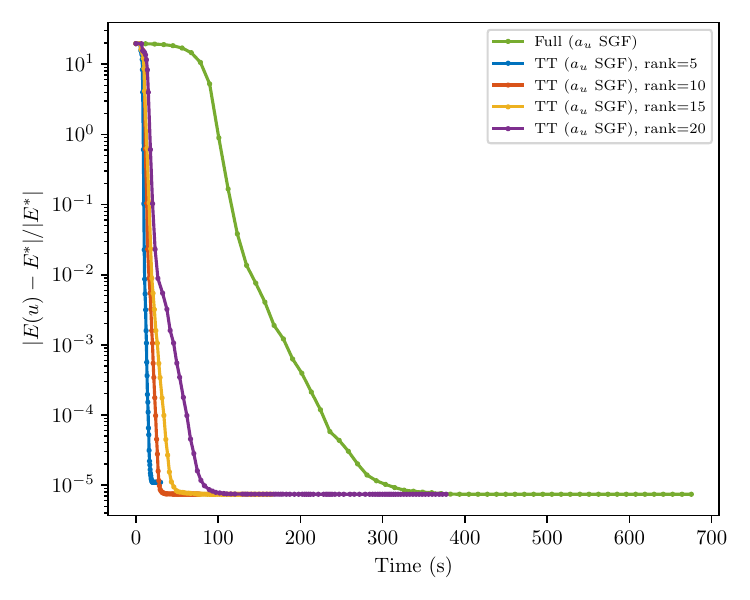}
        \caption{$n=2^{11}$ points per dimension.}
    \end{subfigure}
    \caption{Comparison of the full and TT formats for the $a_u$ SGF, with TT
    ranks $r=5,10,15,20$. Three-component Bose--Einstein condensate on
    $\Omega=[0,1]^2$ with a checkerboard potential taking the values $\{0,
        2^{12}\}$ on squares of edge length $\epsilon=2^{-6}$ plus the trapping
    potential $V_{\mathrm{trap}}(x_1,x_2) = 10^{6}((2x_1-1)^{40} +
        (2x_2-1)^{40})$, interaction parameters $\kappa_{11}=0.5$,
    $\kappa_{22}=5$, $\kappa_{33}=10$ and $\kappa_{ij}=1$ otherwise, masses
    $N_1=N_2=N_3=1$ and $\beta=10$; discretization with polynomial degree
    $k=4$, R-NLCG optimizer, S+V preconditioner and rank $r_d = p = 3$ in the
    component mode. Energy error vs time is shown for $n=2^{10}$ and $n=2^{11}$
    grid points per dimension; each dot corresponds to one iteration.}
    \label{fig:exp9_multicomponent}
\end{figure}

\begin{table}[htbp]
    \caption{Comparison of Riemannian Gradient Descent (R-GD) and Riemannian
    Nonlinear Conjugate Gradient (R-NLCG) for the $a_u$ Sobolev Gradient Flow,
    in the full format and in the TT format.
    Three-component Bose--Einstein condensate on $\Omega=[0,1]^2$ with the
    checkerboard and trapping potentials of \Cref{fig:exp9_multicomponent},
    interaction parameters $\kappa_{11}=0.5$, $\kappa_{22}=5$,
    $\kappa_{33}=10$ and $\kappa_{ij}=1$ otherwise, masses $N_1=N_2=N_3=1$ and
    $\beta=10$; discretized with $n=2^{11}$ grid points per dimension and
    polynomial degree $k=4$, S+V preconditioner.}
    \label{tab:exp9_multicomponent}
    \centering
    \begin{tabular}{llcrrr}
        \toprule
        Optimizer               & Method              & Rank & Final $E(u)$ & Iters & Time (s) \\
        \midrule
        \multirow{5}{*}{R-GD}   & Full                &      & 4777.17      & 148   & 1089.68  \\
                                & \multirow{4}{*}{TT} & 5    & 4777.19      & 99    & 31.7662  \\
                                &                     & 10   & 4777.17      & 146   & 108.703  \\
                                &                     & 15   & 4777.17      & 201   & 445.981  \\
                                &                     & 20   & 4777.17      & 176   & 610.613  \\
        \midrule
        \multirow{5}{*}{R-NLCG} & Full                &      & 4777.17      & 60    & 675.394  \\
                                & \multirow{4}{*}{TT} & 5    & 4777.19      & 56    & 29.8981  \\
                                &                     & 10   & 4777.17      & 78    & 77.4799  \\
                                &                     & 15   & 4777.17      & 79    & 165.219  \\
                                &                     & 20   & 4777.17      & 98    & 377.234  \\
        \bottomrule
    \end{tabular}
\end{table}
\FloatBarrier
\section{Conclusions and perspectives}
We have shown how to construct a first-order Riemannian optimization scheme for
the computation of Gross--Pitaevskii ground states directly in the tensor train
format. The scheme operates on the manifold of unit-norm functions of fixed
TT-rank, so that both the mass constraint and the low-rank representation are
preserved along the whole optimization. Its descent direction is derived from
the energy-adaptive Riemannian gradient, whose computation requires the solution
of a linear system that we precondition. Combined with a spectral discretization
in space with numerical integration, this scheme substantially reduces the
computational time with respect to the full-rank computations. The reduction is
consistent across the discretizations and the experiments we considered, and
holds for both the single- and the multicomponent Gross--Pitaevskii equation.
The results also show that small to moderate TT-ranks are sufficient to capture
the ground state up to the relative error introduced by the discretization.

Selecting the optimal rank for a given target error is, however, not a trivial
task, and an a posteriori or rank-adaptive procedure for this choice would make
the method even more attractive. A further direction of improvement would be to
study the quantized scheme in the framework of Riemannian optimization: even a
mild reduction of the mode size and increase of the dimension (reshaping the
tensors to be, e.g., six- or twelve-dimensional, with correspondingly smaller
mode sizes) could prove beneficial without incurring the ill-conditioning
associated with QTT \cite{Bachmayr2020}. Finally, the scheme we propose is a
first-order one, and Riemannian second-order methods for Gross--Pitaevskii on
low-rank manifolds are a promising direction of future research; there, however,
Hessians are known to easily become indefinite
\cite[Prop.~2.28--2.30]{Bioli2024}, which is what prevented us from pursuing it
in the present work.

\bibliographystyle{abbrv}
\bibliography{bibliography}

\begin{thebibliography}{10}

\bibitem{Abraham1988}
R.~Abraham, J.~E. Marsden, and T.~Ratiu.
\newblock {\em Manifolds, Tensor Analysis, and Applications}, volume~75 of {\em
  Applied Mathematical Sciences}.
\newblock Springer-Verlag, New York, 2 edition, 1988.

\bibitem{absil_optimization_2008}
P.-A. Absil, R.~Mahony, and R.~Sepulchre.
\newblock {\em Optimization {{Algorithms}} on {{Matrix Manifolds}}:}.
\newblock Princeton University Press, Dec. 2008.

\bibitem{AiHenningYadavYuan2026}
Y.~Ai, P.~Henning, M.~Yadav, and S.~Yuan.
\newblock {Riemannian} conjugate {Sobolev} gradients and their application to
  compute ground states of {BECs}.
\newblock {\em Journal of Computational and Applied Mathematics}, 473:116866,
  2026.

\bibitem{AltmannJmethod}
R.~Altmann, P.~Henning, and D.~Peterseim.
\newblock The {$J$}-method for the {G}ross-{P}itaevskii eigenvalue problem.
\newblock {\em Numer. Math.}, 148(3):575--610, 2021.

\bibitem{altmann_riemannian_2025}
R.~Altmann, M.~Hermann, D.~Peterseim, and T.~Stykel.
\newblock {Riemannian} optimization methods for ground states of multicomponent
  {Bose--Einstein} condensates.
\newblock {\em IMA Journal of Numerical Analysis}, 46(4):1994--2033, 2026.

\bibitem{altmann_energy-adaptive_2022}
R.~Altmann, D.~Peterseim, and T.~Stykel.
\newblock Energy-adaptive {{Riemannian}} optimization on the {{Stiefel}}
  manifold.
\newblock {\em ESAIM: Mathematical Modelling and Numerical Analysis},
  56(5):1629--1653, 2022.

\bibitem{AltmannPeterseimStykel2024}
R.~Altmann, D.~Peterseim, and T.~Stykel.
\newblock {Riemannian} {Newton} methods for energy minimization problems of
  {Kohn--Sham} type.
\newblock {\em Journal of Scientific Computing}, 101(1), 2024.
\newblock Article 6.

\bibitem{antoine_efficient_2017}
X.~Antoine, A.~Levitt, and Q.~Tang.
\newblock Efficient spectral computation of the stationary states of rotating
  {{Bose-Einstein}} condensates by preconditioned nonlinear conjugate gradient
  methods.
\newblock {\em Journal of Computational Physics}, 343:92--109, 2017.

\bibitem{Bachmayr2023Low}
M.~Bachmayr.
\newblock Low-rank tensor methods for partial differential equations.
\newblock {\em Acta Numerica}, 32:1--121, 2023.

\bibitem{Bachmayr2020}
M.~Bachmayr and V.~Kazeev.
\newblock Stability of low-rank tensor representations and structured
  multilevel preconditioning for elliptic {PDE}s.
\newblock {\em Found. Comput. Math.}, 20(5):1175--1236, 2020.

\bibitem{bao2004computing}
W.~Bao and Q.~Du.
\newblock Computing the ground state solution of {{Bose}}--{{Einstein}}
  condensates by a normalized gradient flow.
\newblock {\em SIAM Journal on Scientific Computing}, 25(5):1674--1697, 2004.

\bibitem{Bioli2024}
I.~Bioli.
\newblock Preconditioned low-rank {R}iemannian optimization for multiterm
  linear matrix equations.
\newblock Master's thesis, EPFL, 2024.

\bibitem{bioli_preconditioned_2025}
I.~Bioli, D.~Kressner, and L.~Robol.
\newblock Preconditioned low-rank riemannian optimization for symmetric
  positive definite linear matrix equations.
\newblock {\em SIAM Journal on Scientific Computing}, 47(2):A1091--A1116, 2025.

\bibitem{BouComas2025}
A.~{Bou-Comas}, M.~{P{\l}odzie{\'n}}, L.~{Tagliacozzo}, and J.~{Jos{\'e}
  Garc{\'\i}a-Ripoll}.
\newblock {Quantics Tensor Train for solving Gross-Pitaevskii equation}.
\newblock {\em arXiv e-prints}, page arXiv:2507.03134, July 2025.
\newblock arXiv:2507.03134.

\bibitem{boumal_introduction_2023}
N.~Boumal.
\newblock {\em An {{Introduction}} to {{Optimization}} on {{Smooth
  Manifolds}}}.
\newblock Cambridge University Press, 1 edition, Mar. 2023.

\bibitem{bradbury_jax_2018}
J.~Bradbury, R.~Frostig, P.~Hawkins, M.~J. Johnson, C.~Leary, D.~Maclaurin,
  G.~Necula, A.~Paszke, J.~VanderPlas, S.~Wanderman-Milne, and Q.~Zhang.
\newblock {{JAX}}: Composable transformations of {{Python}}+{{NumPy}} programs,
  2018.

\bibitem{braess_approximation_2005}
D.~Braess and W.~Hackbusch.
\newblock Approximation of {$1/x$} by exponential sums in {$[1,\infty)$}.
\newblock {\em IMA Journal of Numerical Analysis}, 25(4):685--697, Oct. 2005.

\bibitem{Cances2010}
E.~Canc\`es, R.~Chakir, and Y.~Maday.
\newblock Numerical analysis of nonlinear eigenvalue problems.
\newblock {\em Journal of Scientific Computing}, 45(1--3):90--117, 2010.

\bibitem{CHQZ2}
C.~Canuto, M.~Y. Hussaini, A.~Quarteroni, and T.~A. Zang.
\newblock {\em Spectral methods}.
\newblock Scientific Computation. Springer, Berlin, 2007.
\newblock Evolution to complex geometries and applications to fluid dynamics.

\bibitem{chenFullyDiscretizedSobolev2024}
Z.~Chen, J.~Lu, Y.~Lu, and X.~Zhang.
\newblock Fully discretized {{Sobolev}} gradient flow for the
  {{Gross-Pitaevskii}} eigenvalue problem.
\newblock {\em Mathematics of Computation}, Nov. 2024.

\bibitem{chenConvergenceSobolevGradient2024}
Z.~Chen, J.~Lu, Y.~Lu, and X.~Zhang.
\newblock On the {{Convergence}} of {{Sobolev Gradient Flow}} for the
  {{Gross}}--{{Pitaevskii Eigenvalue Problem}}.
\newblock {\em SIAM Journal on Numerical Analysis}, 62(2):667--691, Apr. 2024.

\bibitem{Connor2026}
R.~J.~J. Connor, C.~W. Duncan, and A.~J. Daley.
\newblock Tensor network methods for the {Gross--Pitaevskii} equation on fine
  grids.
\newblock {\em New Journal of Physics}, 28(2):023203, 2026.

\bibitem{DanailaKazemi2010}
I.~Danaila and P.~Kazemi.
\newblock A new {Sobolev} gradient method for direct minimization of the
  {Gross--Pitaevskii} energy with rotation.
\newblock {\em SIAM Journal on Scientific Computing}, 32(5):2447--2467, 2010.

\bibitem{danaila_computation_2017}
I.~Danaila and B.~Protas.
\newblock Computation of {{Ground States}} of the {{Gross--Pitaevskii
  Functional}} via {{Riemannian Optimization}}.
\newblock {\em SIAM Journal on Scientific Computing}, 39(6):B1102--B1129, Jan.
  2017.

\bibitem{deville_high-order_2002}
M.~O. Deville, P.~F. Fischer, and E.~H. Mund.
\newblock {\em High-{{Order Methods}} for {{Incompressible Fluid Flow}}}.
\newblock Cambridge {{Monographs}} on {{Applied}} and {{Computational
  Mathematics}}. Cambridge University Press, Cambridge, 2002.

\bibitem{GarciaRipollPerezGarcia2001}
J.~J. Garc{\'i}a-Ripoll and V.~M. P{\'e}rez-Garc{\'i}a.
\newblock Optimizing {Schr{\"o}dinger} functionals using {Sobolev} gradients:
  Applications to quantum mechanics and nonlinear optics.
\newblock {\em SIAM Journal on Scientific Computing}, 23(4):1316--1334, 2001.

\bibitem{hackbusch_computation_2019}
W.~Hackbusch.
\newblock Computation of best {$L^{\infty}$} exponential sums for {$1/x$} by
  {{Remez}}' algorithm.
\newblock {\em Computing and Visualization in Science}, 20(1):1--11, Feb. 2019.

\bibitem{HaegemanEtAl2011}
J.~Haegeman, J.~I. Cirac, T.~J. Osborne, I.~Pi{\v z}orn, H.~Verschelde, and
  F.~Verstraete.
\newblock Time-dependent variational principle for quantum lattices.
\newblock {\em Physical Review Letters}, 107(7):070601, 2011.

\bibitem{Haegeman2016Unifying}
J.~Haegeman, C.~Lubich, I.~Oseledets, B.~Vandereycken, and F.~Verstraete.
\newblock Unifying time evolution and optimization with matrix product states.
\newblock {\em Physical Review B}, 94(16):165116, 2016.

\bibitem{heid_gradient_2021}
P.~Heid, B.~Stamm, and T.~P. Wihler.
\newblock Gradient flow finite element discretizations with energy-based
  adaptivity for the {{Gross-Pitaevskii}} equation.
\newblock {\em Journal of Computational Physics}, 436:110165, July 2021.

\bibitem{henningGrossPitaevskiiEquation2025}
P.~Henning and E.~Jarlebring.
\newblock The {{Gross}}--{{Pitaevskii Equation}} and {{Eigenvector
  Nonlinearities}}: {{Numerical Methods}} and {{Algorithms}}.
\newblock {\em SIAM Review}, 67(2):256--317, May 2025.

\bibitem{henning_sobolev_2020}
P.~Henning and D.~Peterseim.
\newblock Sobolev {{Gradient Flow}} for the {{Gross}}--{{Pitaevskii Eigenvalue
  Problem}}: {{Global Convergence}} and {{Computational Efficiency}}.
\newblock {\em Siam Journal On Numerical Analysis}, 58(3):1744--1772, Jan.
  2020.

\bibitem{holtz_manifolds_2012}
S.~Holtz, T.~Rohwedder, and R.~Schneider.
\newblock On manifolds of tensors of fixed {{TT-rank}}.
\newblock {\em Numerische Mathematik}, 120(4):701--731, Apr. 2012.

\bibitem{karniadakis_spectralhp_2005}
G.~Karniadakis and S.~Sherwin.
\newblock {\em Spectral/Hp {{Element Methods}} for {{Computational Fluid
  Dynamics}}}.
\newblock Oxford University Press, June 2005.

\bibitem{KochLubich2007}
O.~Koch and C.~Lubich.
\newblock Dynamical low-rank approximation.
\newblock {\em SIAM Journal on Matrix Analysis and Applications},
  29(2):434--454, 2007.

\bibitem{kressner_preconditioned_2016}
D.~Kressner, M.~Steinlechner, and B.~Vandereycken.
\newblock Preconditioned {{Low-rank Riemannian Optimization}} for {{Linear
  Systems}} with {{Tensor Product Structure}}.
\newblock {\em SIAM Journal on Scientific Computing}, 38(4):A2018--A2044, Jan.
  2016.

\bibitem{kressner_krylov_2010}
D.~Kressner and C.~Tobler.
\newblock Krylov {{Subspace Methods}} for {{Linear Systems}} with {{Tensor
  Product Structure}}.
\newblock {\em SIAM Journal on Matrix Analysis and Applications},
  31(4):1688--1714, Jan. 2010.

\bibitem{lancaster_variational_1991}
P.~Lancaster and Q.~Ye.
\newblock Variational and numerical methods for symmetric matrix pencils.
\newblock {\em Bulletin of the Australian Mathematical Society}, 43(1):1--17,
  Feb. 1991.

\bibitem{liu_simple_2024}
X.~Liu, J.~Shen, and X.~Zhang.
\newblock A simple {{GPU}} implementation of spectral-element methods for
  solving {{3D Poisson}} type equations on rectangular domains and its
  applications, June 2024.
\newblock arXiv:2310.00226.

\bibitem{lubichTimeIntegrationTensor2015}
C.~Lubich, I.~V. Oseledets, and B.~Vandereycken.
\newblock Time {{Integration}} of {{Tensor Trains}}.
\newblock {\em SIAM Journal on Numerical Analysis}, 53(2):917--941, Jan. 2015.

\bibitem{lynch_direct_1964}
R.~E. Lynch, J.~R. Rice, and D.~H. Thomas.
\newblock Direct solution of partial difference equations by tensor product
  methods.
\newblock {\em Numerische Mathematik}, 6(1):185--199, Dec. 1964.

\bibitem{maday_spectral_1989}
Y.~Maday and A.~T. Patera.
\newblock Spectral element methods for the incompressible {{Navier-Stokes}}
  equations.
\newblock In {\em {{IN}}: {{State-of-the-art}} Surveys on Computational
  Mechanics ({{A90-47176}} 21-64). {{New York}}}, pages 71--143, Jan. 1989.

\bibitem{MRS2022}
C.~Marcati, M.~Rakhuba, and C.~Schwab.
\newblock Tensor rank bounds for point singularities in {{R}}{$^{3}$}.
\newblock {\em Advances in Computational Mathematics}, 48(3), 2022.

\bibitem{montardini_low-rank_2023}
M.~Montardini, G.~Sangalli, and M.~Tani.
\newblock A low-rank isogeometric solver based on {{Tucker}} tensors.
\newblock {\em Computer Methods in Applied Mechanics and Engineering},
  417:116472, Dec. 2023.

\bibitem{Niedermeier2026}
M.~Niedermeier, A.~Moulinas, T.~Louvet, J.~L. Lado, and X.~Waintal.
\newblock Solving the gross-pitaevskii equation on multiple different scales
  using the quantics tensor train representation.
\newblock {\em Phys. Rev. Res.}, 8:023006, Apr. 2026.

\bibitem{novikov_automatic_2022}
A.~Novikov, M.~Rakhuba, and I.~Oseledets.
\newblock Automatic differentiation for {{Riemannian}} optimization on low-rank
  matrix and tensor-train manifolds.
\newblock {\em SIAM Journal on Scientific Computing}, 44(2):A843--A869, 2022.

\bibitem{oseledets2011tensor}
I.~V. Oseledets.
\newblock Tensor-train decomposition.
\newblock {\em SIAM Journal on Scientific Computing}, 33(5):2295--2317, 2011.

\bibitem{rakhuba2019low}
M.~Rakhuba, A.~Novikov, and I.~Oseledets.
\newblock Low-rank riemannian eigensolver for high-dimensional hamiltonians.
\newblock {\em Journal of Computational Physics}, 396:718--737, 2019.

\bibitem{sato_riemannian_2021}
H.~Sato.
\newblock {\em Riemannian {{Optimization}} and {{Its Applications}}}.
\newblock {{SpringerBriefs}} in {{Electrical}} and {{Computer Engineering}}.
  Springer International Publishing, Cham, 2021.

\bibitem{steinlechner_riemannian_2016}
M.~M. Steinlechner.
\newblock {\em Riemannian {{Optimization}} for {{Solving High-Dimensional
  Problems}} with {{Low-Rank Tensor Structure}}}.
\newblock PhD thesis, EPFL, Lausanne, 2016.

\bibitem{townsend_pymanopt_2016}
J.~Townsend, N.~Koep, and S.~Weichwald.
\newblock Pymanopt: A python toolbox for optimization on manifolds using
  automatic differentiation.
\newblock {\em Journal of Machine Learning Research}, 17(137):1--5, 2016.

\end{thebibliography}
\end{document}